
\documentclass{amsart}
\usepackage[top=50pt,bottom=57pt,left=76pt,right=76pt]{geometry}

\usepackage{amsfonts, mathrsfs, enumerate, amsmath, amsthm, amssymb,
thm-restate, pifont, mathrsfs}
\usepackage{textcomp}

\usepackage{tikzit,xcolor,color}
\usetikzlibrary{shapes.multipart}
\tikzstyle{new edge style 2}=[-, thick]

\usepackage[colorlinks]{hyperref}
\hypersetup{linkcolor=blue, urlcolor=blue, citecolor=red}

\usepackage{theoremref}

\newcommand{\ljy}[1]{\textcolor{green}{#1}}
\newcommand{\ignore}[1]{}

\newtheorem{theorem}{Theorem}[section]

\newtheorem{cor}[theorem]{Corollary}
\newtheorem{prop}[theorem]{Proposition}
\newtheorem{lem}[theorem]{Lemma}
\newtheorem{definition}[theorem]{Definition}
\newtheorem{claim}[theorem]{Claim}
\newtheorem{question}[theorem]{Question}
\newtheorem{example}[theorem]{Example}
\newenvironment{ex}{\begin{example} \rm}{\end{example}}
\newenvironment{de}{\begin{definition} \rm}{\end{definition}}

\newcommand{\N}{\mathbb{N}}
\newcommand{\R}{\mathbb{R}}
\newcommand{\T}{\mathcal{T}}

\newcommand{\set}[2]{\{ #1\colon#2 \} }  
  
\newcommand{\Inj}{\operatorname{Inj}}  
  
\newcommand{\Sym}{\operatorname{Sym}}  
\newcommand{\im}{\operatorname{im}}
\newcommand{\id}{\operatorname{id}}

\newcommand{\Q}{\mathbb{Q}}

\renewcommand{\to}{\longrightarrow}
\newcommand{\dom}{\operatorname{dom}}
\newcommand{\na}{\mathbin{\blacklozenge}}

\newcommand{\Aut}{\operatorname{Aut}}
\newcommand{\End}{\operatorname{End}}
\newcommand{\Emb}{\operatorname{Emb}}
\newcommand{\Poly}{\Pol}
\newcommand{\Pol}{\operatorname{Pol}}

\newcommand{\EndQ}{\End(\Q,\leq)}

\newcommand{\Age}{\operatorname{Age}}
\newcommand{\x}{\mathbf{x}}

\newcommand{\reduct}[2]{\mathbb{#1}(\mathbb{#2})}

\title[Polish semigroup topologies on endomorphism monoids]{ Polish topologies on endomorphism monoids of relational structures}
\author{L. Elliott, J. Jonu\v{s}as, J. D. Mitchell, Y. P\'eresse, and M. Pinsker}
\thanks{L. Elliott would like to acknowledge the support of Mathematics and
Statistics at the University of St Andrews for supporting their Ph.D. studies.
J. Jonu\v{s}as received funding from the Austrian Science Fund (FWF) through
Lise Meitner grant No M 2555. M. Pinsker has received
funding from the Austrian Science Fund (FWF) through project No P32337
and from the Czech Science Foundation (grant No 18-20123S)}
\begin{document}
\maketitle

\begin{abstract}
In this paper we present general techniques for characterising minimal and maximal semigroup topologies on the endomorphism monoid $\End(\mathbb{A})$ of a  countable relational structure $\mathbb{A}$. As applications, we show that the endomorphism monoids of several well-known relational structures,  including the random graph, the random directed graph, and the random partial order, possess a unique Polish semigroup topology. In every case this unique topology is the subspace topology induced by the usual topology on the Baire space $\N ^ \N$. 
 We also show that many of these structures  have the property that every homomorphism from their endomorphism monoid to a second countable topological semigroup is continuous; referred to as \textit{automatic continuity}. 
Many of the results about endomorphism monoids are extended to clones of polymorphisms on the same structures.
\end{abstract}

\section{Introduction}\label{section-intro}

The Baire space $\N ^ \N$ is naturally endowed with the product topology arising from the discrete topology on every copy of $\N$. This topology will be referred to as the \textit{pointwise topology}. A subbasis for the pointwise topology on $\N ^ \N$ consists of the sets $U_{x, y} = \set{f\in \N^ \N}{(x)f = y}$ for all $x,y \in \N$.
The topological space $\N ^ \N$ also forms a monoid with operation the composition of functions $\circ$, called the \textit{full transformation monoid on $\N$}. The space $\N ^ \N$ is central in the context of Polish spaces (completely metrizable and separable topological spaces), and in the theory of semigroups. For example, every non-empty Polish space is a continuous image of the Baire space, and every countable semigroup embeds into the full transformation monoid. 
The pointwise topology happens to be compatible with the algebraic structure of $\N ^ \N$ in the sense that the function
$\circ : \N ^ \N \times \N ^ \N\to \N ^ \N$ is continuous; such a topology is referred to as a \textit{semigroup topology} on $\N ^ \N$.  In fact, 
the algebraic structure of $\N ^ \N$ and the pointwise topology are deeply intertwined: the pointwise topology is the unique Polish semigroup topology on $\N ^ \N$~\cite[Theorem 5.4]{Elliott2019aa}. 

The monoid $\N ^ \N$ is not alone in having a unique Polish topology compatible with its algebraic structure. 
As a $G_{\delta}$ subset of $\N ^ \N$, the symmetric group $\Sym(\N)$ is also a Polish space with the subspace topology induced by the pointwise topology on $\N ^ \N$. Abusing notation slightly, we will  refer to the subspace topology on any subset of $\N^\N$, induced by the pointwise topology on $\N ^ \N$, as the pointwise topology. The pointwise topology on $\Sym(\N)$ is likewise compatible with the group structure of $\Sym(\N)$, in that multiplication and inversion are continuous. In~\cite{Gaughan1967aa} it was shown that every Hausdorff group topology on $\Sym(\N)$ contains the pointwise topology. It is folklore in the theory of Polish groups that if $\T_1$ and $\T_2$ are Polish group topologies on a group $G$ and $\T_1\subseteq \T_2$, then $\T_1 = \T_2$; see \cite[Theorem 9.10 and Proposition 11.5]{Kechris1995aa}. It follows that the pointwise topology is the unique Polish group topology on $\Sym(\N)$. Uniqueness of compatible topologies has been studied for many further groups and semigroups, and more general objects such as clones; see for example~\cite{Behrisch2017aa, Bodirsky2018aa, Bodirsky2017aa, Chang2017aa, Cohen2016aa, Elliott2019aa, Gartside2008ab,
Hodges1993ab, Hrushovski1992aa, Kallman1976aa, Kallman1979aa, Kallman1984aa, Kallman1984ab, Kallman1986aa, Kallman2010aa, Kechris2007aa, Pech2016aa, Pech2017aa, Pech:2018aa, Perez2020aa, Rosendal2007ac, Sabok2019aa, Shelah1984aa, Solovay1970aa}.

Every closed subset  of $\N ^ \N$ is itself a Polish space with the pointwise topology. If $S$ is a (topologically) closed submonoid of $\N ^ \N$, then $S$ is the endomorphism monoid of some relational structure on $\N$; and every such endomorphism monoid is a closed submonoid of $\N ^ \N$; see~\cite[Theorem 5.8]{Cameron1999aa} or~\cite[Proposition 6.1]{Cameron2006aa}.
As such every endomorphism monoid of a countable relational structure is a Polish monoid with respect to the pointwise topology; similarly, every automorphism group of a countable relational structure is a Polish group in this way.


In this paper we present general techniques for characterising minimal and maximal semigroup topologies on the endomorphism monoid $\End(\mathbb{A})$ of a  countable relational structure $\mathbb{A}$. As an application we show that the pointwise topology is the unique Polish semigroup topology on the endomorphism monoids of several well-known relational structures; see Corollary~\ref{apex}.

 In Section~\ref{section-fraisse},  we prove several results about minimal semigroup topologies.
 The main result in Section~\ref{section-fraisse} is Theorem~\ref{arsfacere}, which is stated below. The theorem applies to $\omega$-categorical homogeneous relational structures with no algebraicity that satisfy an additional, somewhat technical, property named arsfacere (see Definition~\ref{de-arsfacere}). Examples of such arsfacere relational structures include the random graph, the random directed graph, the random tournament, and several further well-known relational structures (see Theorem~\ref{cor-zariski-endomorphisms}).
Another notion that is central in Section~\ref{section-fraisse} is that of the Zariski topology on a monoid. 
 The \textit{Zariski} topology on a monoid $S$ is the topology with subbasis consisting
of
\[
  \set{s\in S}{(s)\phi_1 \not= (s)\phi_2}
\]
where $\phi_1, \phi_2: S\to S$ are any functions such that $(s)\phi_1 = t_1 s
  t_2 s \cdots t_{k - 1} s t_{k} $, $k\geq 1$ for every $s\in S$ and for some fixed
$t_1,
  \ldots, t_{k}\in S$, and $\phi_2$ is defined analogously for some fixed
$u_1, \ldots, u_{l} \in S$. This notion is analogous to the notion of the Zariski topology on a group. The Zariski topology on any monoid $S$ is $T_1$ and contained in every Hausdorff semigroup topology for $S$; see \cite[Propositions 2.1 and 2.2]{Elliott2019aa}. 

\begin{restatable}{ltheorem}{arsfacere}
\label{arsfacere}
 If $\mathbb{A}$ is a countable $\omega$-categorical homogeneous arsfacere relational structure with no algebraicity, then the Zariski topology and the pointwise topology coincide on every monoid $S$ such that $\Emb(\mathbb{A}) \leq S\leq \End(\mathbb{A})$. 
 \end{restatable}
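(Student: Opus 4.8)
The plan is to exploit the fact, recorded in the excerpt, that the Zariski topology is $T_1$ and contained in every Hausdorff semigroup topology on $S$, so that one of the two inclusions is free. Since $S$ is a submonoid of $\N^\N$, the pointwise (subspace) topology on $S$ is a Hausdorff semigroup topology, and so by \cite[Propositions 2.1 and 2.2]{Elliott2019aa} the Zariski topology on $S$ is contained in the pointwise topology. It therefore remains to prove the reverse inclusion, and for this it suffices to show that every subbasic pointwise-open set $U_{x,y}\cap S=\set{s\in S}{(x)s=y}$ is Zariski-open. Note that for fixed $x$ and varying $y$ these sets partition $S$, so showing each is Zariski-open automatically makes each Zariski-clopen; equivalently, I aim to prove that every evaluation map $s\mapsto (x)s$ is continuous from $(S,\mathrm{Zariski})$ to $\N$ with the discrete topology.

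First I would record the structural consequences of the hypotheses that will be used throughout. By $\omega$-categoricity and the Ryll--Nardzewski theorem, $\Aut(\mathbb{A})$ has only finitely many orbits on $n$-tuples for each $n$, so only finitely many local configurations around the pair $(x,y)$ need to be treated; homogeneity lets me extend any isomorphism between finite substructures to an automorphism, and more generally realise prescribed finite partial maps by members of $\Emb(\mathbb{A})\le S$; and no algebraicity guarantees that the orbit of any point over a finite set is infinite, giving the freedom to move chosen points while fixing others. These let me fix a finite amount of data near $x$ and $y$ and then transport the entire construction along automorphisms, so that it suffices to treat one representative of each relevant orbit.

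The core of the argument is to exhibit, for each pair $(x,y)$, finitely many fixed elements of $S$ together with word maps $\phi_1,\phi_2$ in the sense of the Zariski subbasis, so that $(x)s=y$ becomes equivalent to a finite Boolean combination of inequalities $(s)\phi_1\ne(s)\phi_2$. The naive approach — pre-compose by an embedding sending a reference point to $x$, so that one coordinate of the resulting word records $(x)s$, and post-compose by coefficients differing only at $y$, so that this coordinate is tested for the value $y$ — cannot work directly: in a rigid homogeneous structure there are typically no two embeddings, nor two endomorphisms, agreeing off a single point, so a comparison of functions by word maps built from embeddings alone can never be made sensitive to a single coordinate or to a single value. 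This is exactly the point at which the arsfacere property (Definition~\ref{de-arsfacere}) enters: it supplies endomorphisms in $\End(\mathbb{A})$ with prescribed finite behaviour and a controlled global effect (notably, endomorphisms that need not be injective) to serve as the coefficients $t_i,u_j$. Against such coefficients a global equality $(s)\phi_1=(s)\phi_2$ of word maps translates into a statement about how $s$ interacts with the collapse induced by the coefficients, and the arsfacere property is tailored precisely so that this statement isolates the single condition $(x)s=y$. Homogeneity and no algebraicity are then used, by a back-and-forth placing the relevant points in the correct orbits, to verify that the chosen coefficients encode $(x)s=y$ and nothing more, while $\omega$-categoricity guarantees that the finitely many configurations exhaust all cases, so that the construction is uniform in $s$.

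I expect the main obstacle to be exactly this construction of coefficients: arranging endomorphisms whose behaviour makes an inherently global comparison of functions detect membership in the single fibre $U_{x,y}$, in spite of the rigidity obstruction just described. Everything else — the reduction to subbasic sets, the appeal to Ryll--Nardzewski to bound the number of cases, and the transport along automorphisms — should be routine once the encoding is available, and the final verification that the exhibited set is Zariski-open will then amount to unwinding the definition of the Zariski subbasis.
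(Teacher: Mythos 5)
Your opening reduction is correct and matches the paper's frame: the Zariski topology is contained in the pointwise topology for free, so everything comes down to showing that each set $U_{x,y}\cap S$ is Zariski-open, which the paper does by verifying the hypotheses of a criterion imported from earlier work (Lemma~\ref{lem-zariski-pointwise}). But the core encoding is exactly what you leave unconstructed, and the two structural claims you make about it point in the wrong direction. Your assertion that ``there are typically no two embeddings, nor two endomorphisms, agreeing off a single point'', which you use to dismiss the direct approach, is false under the stated hypotheses: producing $\alpha,\beta\in\Emb(\mathbb{A})$ with $(x)\alpha=(x)\beta$ if and only if $x\neq a$ is precisely Lemma~\ref{lem-zariski-functions1} of the paper. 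No algebraicity yields automorphisms fixing any prescribed finite set $F\setminus\{a\}$ pointwise while moving $a$, and $\omega$-categoricity is then used as a pigeonhole on orbits of tuples to pass to subsequences converging to the desired pair of embeddings --- a compactness/limit use of Ryll--Nardzewski, not the ``finitely many configurations to treat'' use you describe. Such a pair is exactly what makes word-map inequalities sensitive to a single value: $(w)s=a$ if and only if $(w)s\alpha\neq(w)s\beta$.

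Your proposed substitute --- taking the Zariski coefficients to be non-injective endomorphisms ``supplied by'' arsfacere --- both misreads that definition and cannot prove the theorem as stated. The subbasic Zariski sets of $S$ only allow coefficients drawn from $S$ itself, and $S$ may be as small as $\Emb(\mathbb{A})$; so all witnesses must be embeddings, which is why the paper constructs every witness as a limit of automorphisms. Arsfacere is used in Lemma~\ref{lem-zariski-functions2} to build self-embeddings $\gamma_1,\dots,\gamma_n\in\Emb(\mathbb{A})$ whose images pairwise intersect in $\{a\}$ and such that every tuple chosen from $\im(\gamma_1)\setminus\{a\},\dots,\im(\gamma_n)\setminus\{a\}$ has a permutation in $E^{\mathbb{A}}$; the payoff is that for any endomorphism $f$ and any $b\neq(a)f$, some $\im(\gamma_i)$ must miss $(b)f^{-1}$, since otherwise applying $f$ to an $E$-related cross-tuple of preimages would produce the loop $(b,\dots,b)\in E^{\mathbb{A}}$, contradicting clause (i) of Definition~\ref{de-arsfacere}. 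With these witnesses one gets, for all $a,x$, that $\set{s\in S}{(a)s=x}=\bigcap_{i=1}^n\set{s\in S}{\gamma_i s\alpha_x\neq \gamma_i s\beta_x}$, where $\alpha_x,\beta_x$ agree exactly off $x$: a finite intersection of subbasic Zariski-open sets with all coefficients in $\Emb(\mathbb{A})\subseteq S$. Since you explicitly defer this construction as ``the main obstacle'', and your sketch of how to overcome it rests on a false rigidity claim and on coefficients that need not lie in $S$, the proposal has a genuine gap at the heart of the proof.
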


In Section~\ref{section-maximal}, we consider maximal semigroup topologies on the endomorphism monoid $\End(\mathbb{X})$ of a homogeneous relational structure $\mathbb{X}$. This is achieved via property \textbf{X}, which is defined as follows. 
If $S$ is a topological semigroup and $A$ is a subset of $S$, then we say
that $S$ satisfies \textit{property \textbf{X} with respect to $A$} if the
following holds:
\begin{quote}
  for every $s\in S$ there exists $f_s, g_s\in S$ and $t_s\in A$ such that
  $s = f_s t_s g_s$ and for every neighbourhood $B$ of $t_s$ the set $f_s
    (B\cap
    A)g_s$ is a neighbourhood of $s$.
\end{quote}
It is shown in \cite[Theorem 3.1]{Elliott2019aa} that if a Polish semigroup $S$ has property \textbf{X} with respect to a Polish subgroup $G$, then the topology on $S$ is maximal among the Polish semigroup topologies on $S$.
We begin by establishing a sufficient condition, based on the existence of certain endomorphisms, for $\End(\mathbb{X})$ equipped with the pointwise topology to have property \textbf{X} with respect to the automorphism group $\Aut(\mathbb{X})$ of $\mathbb{X}$. 
We then proceed to isolate certain model-theoretic properties of $\mathbb X$ that imply this condition, and in particular introduce the \emph{strong amalgamation property with homomorphism gluing (SAHG)} in Definition~\ref{definition-homo-glue}. 

This property is  a strengthening of the classical strong amalgamation property
and can also be thought of as an ``almost free amalgamation property''.  
The main theorem proved in Section~\ref{section-maximal} is the following:
\begin{restatable}{ltheorem}{theomodeltheorypropertyx}
\label{theomodeltheorypropertyx}
Let \(\mathbb{X}\) be an $\omega$-categorical   relational structure which is homogeneous and
  homomorphism-homogeneous such that the age of \(\mathbb{X}\) has the strong
  amalgamation property with homomorphism gluing. Then \(\End(\mathbb{X})\)
  equipped with the pointwise topology has property \textbf{X} with respect to
  \(\Aut(\mathbb{X})\).
\ignore{
Let $\sigma$ be a relational signature and
  let \(\mathbb{X}\) be a \(\sigma\)-structure which is homogeneous and
  homomorphism-homogeneous such that the age of \(\mathbb{X}\) has the strong
  amalgamation property with homomorphism gluing. Then \(\End(\mathbb{X})\)
  equipped with the pointwise topology has property \textbf{X} with respect to
  \(\Aut(\mathbb{X})\).
}
\end{restatable}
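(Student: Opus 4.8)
The plan is to exhibit, for each $s \in \End(\mathbb{X})$, a factorisation $s = f_s t_s g_s$ in which $f_s$ is an \emph{embedding}, $g_s$ is a \emph{surjective} endomorphism, and $t_s$ is the identity automorphism (which lies in $\Aut(\mathbb{X})$, as property \textbf{X} permits). The asymmetry is forced: since composition is read left to right, $(x)(f_s t_s g_s) = (((x)f_s)t_s)g_s$, so in order for the map $\alpha \mapsto f_s \alpha g_s$ to reach endomorphisms $e$ with arbitrary kernel off a fixed finite set we must have $\ker(f_s)$ trivial, forcing $f_s \in \Emb(\mathbb{X})$; dually, to reach $e$ with arbitrary image we need $g_s$ surjective. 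I would fix once and for all a single embedding $f = f_s$ whose image $W = (X)f$ is \emph{co-large}, i.e.\ $X \setminus W$ is infinite and contains a copy of $\mathbb{X}$; such an $f$ exists by homogeneity and strong amalgamation, and may be taken independent of $s$. The map $\sigma \colon W \to \mathbb{X}$, $(w)\sigma = ((w)f^{-1})s$, is then a homomorphic copy of $s$ carried onto $W$, and the real task is to extend $\sigma$ to a surjective endomorphism $g_s$ of $\mathbb{X}$ with fibres as generic as possible. With $t_s = \id$ the identity $s = f g_s$ then holds by construction, since $(x)(f g_s) = ((x)f)\sigma = ((x)f f^{-1})s = (x)s$.

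Next I would reduce the neighbourhood requirement to a lifting statement. Let $B = \{\alpha \in \Aut(\mathbb{X}) : \alpha|_G = \id\}$ be a basic neighbourhood of $t_s = \id$, with $G$ finite, and set $F = (G \cap W)f^{-1}$. I claim $[s;F] := \{e \in \End(\mathbb{X}) : e|_F = s|_F\}$ is contained in $f(B \cap \Aut(\mathbb{X}))g_s$, which makes the latter a neighbourhood of $s$. Given $e \in [s;F]$, unwinding $f\alpha g_s = e$ shows it to be equivalent to the single condition
\[
  ((w)\alpha)g_s = ((w)f^{-1})e \qquad (w \in W)
\]
together with $\alpha|_G = \id$. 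Writing $\tau = f^{-1}e \colon W \to \mathbb{X}$, which is a homomorphism because $f^{-1}$ is an isomorphism of $W$ onto $\mathbb{X}$ and $e$ is a homomorphism, the requirement is exactly that $\alpha$ restrict on $W$ to an isomorphic lift of $\tau$ through $g_s$, while fixing $G$ pointwise. The choice $F = (G \cap W)f^{-1}$ guarantees the two constraints agree on $G \cap W$, since there both $\tau$ and $(w)\mapsto (w)g_s = (w)\sigma$ send $w$ to $((w)f^{-1})s$, as $e$ and $s$ agree on $F$.

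I would then build $\alpha$ by a back-and-forth over enumerations of $X$. Start from the partial isomorphism equal to the identity on $G$; by the previous remark this is compatible with the lift condition. At a \emph{forth} step one must bring a new point $w_0 \in W$ into the domain, and the lift condition forces $(w_0)\alpha$ to lie in the fibre $(g_s)^{-1}(((w_0)f^{-1})e)$ while realising, over the current finite image, the type that makes the extended map a partial isomorphism. At a \emph{back} step one must bring a new point of $X$ into the image; here the co-largeness of $W$ lets one choose a preimage in $X \setminus W$, where no lift condition applies, so ordinary homogeneity suffices. Running the two alternately produces a bijection that is a partial isomorphism on every finite set, hence an automorphism of $\mathbb{X}$ by homogeneity, fixing $G$ and satisfying $f\alpha g_s = e$, as required.

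The crux — and essentially the only place the hypotheses on $\age(\mathbb{X})$ are consumed — is the existence of the preimage in the \emph{forth} step. Note first that the constraints on $(w_0)\alpha$ coming from relations that $e$ must \emph{preserve} are automatically consistent: if $w_0$ is related to an already-placed point, applying $g_s$ sends this to a relation among values of $\tau$, which holds precisely because $e$ is a homomorphism. The substance is therefore that the fibre $(g_s)^{-1}(p)$ be rich enough to realise the full type over the current image — including its negative part — whenever that type is compatible with $g_s$ mapping the new point to $p$. Guaranteeing this forces $g_s$ to be built with generic fibres in the first place, and constructing such a surjective endomorphism is exactly an amalgamation problem: one repeatedly glues a fresh preimage onto the partially-defined $g_s$, keeping it a homomorphism (the ``homomorphism gluing'') while keeping the new point suitably separate (the ``strong amalgamation''), which is precisely the content of the strong amalgamation property with homomorphism gluing of Definition~\ref{definition-homo-glue}. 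Here $\omega$-categoricity keeps the bookkeeping finite, and homomorphism-homogeneity ensures the copy $\sigma$ of $s$ on $W$ is an admissible seed for an endomorphism. I expect the delicate point to be verifying that a single $g_s$ can be made generic enough to serve all $e \in [s;F]$ simultaneously, uniformly in the prescribed fibre images.
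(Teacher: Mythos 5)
Your factorisation inverts the paper's division of labour: the paper fixes one generic embedding $f$ (satisfying \textbf{back} and \textbf{fo}, Definitions~\ref{prop-structures-ii} and~\ref{prop-structures-iii}) and one generic surjection $g$ (satisfying \textbf{rth}, Definition~\ref{prop-structures-iv}) once and for all, and lets the \emph{middle automorphism} $t_s=\alpha_s$ carry all the dependence on $s$, producing it by the back-and-forth system $S_{f,g,s}$; you instead set $t_s=\id$ and push the $s$-dependence into $g_s$. Property \textbf{X} permits this in principle, but your neighbourhood reduction is false as stated. Take $\mathbb{X}$ the random graph and let $z\in X\setminus W$ be adjacent to some $w\in W$ (such $z,w$ exist for \emph{every} choice of $W$, since otherwise there would be no edges between $W$ and its complement and $\mathbb{X}$ would be disconnected). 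Put $G=\{z\}$, so that your $F=(G\cap W)f^{-1}=\varnothing$ and $[s;F]=\End(\mathbb{X})$. For any $h=f\alpha g_s$ with $(z)\alpha=z$, the fact that $\alpha$ is an automorphism gives that $z$ is adjacent to $(w)\alpha$, and then, $g_s$ being a homomorphism, $((w)f^{-1})h=((w)\alpha)g_s$ is adjacent to $(z)g_s$. So \emph{every} element of $f(B\cap\Aut(\mathbb{X}))g_s$ takes, at the coordinate $(w)f^{-1}$, a value adjacent to $(z)g_s$; by homomorphism-homogeneity there are endomorphisms $e$ with $((w)f^{-1})e$ non-adjacent to $(z)g_s$, and these lie in $[s;F]$ but not in $f(B\cap\Aut(\mathbb{X}))g_s$, whatever $g_s$ is. The culprit is the interaction between $G\setminus W$ and $W$, which your argument ignores throughout: the same omission occurs in your back-and-forth, whose invariant (partial isomorphism plus the lift condition on $\dom(p)\cap W$) drops the cross-compatibility condition that the paper builds into $S_{f,g,h}$, namely that $pg\cup f^{-1}h$ be a partial homomorphism. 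Your ``automatic consistency'' in the forth step covers only relations of $w_0$ to previously placed points of $W$, not to points placed by back steps (which lie off $W$); and in the back step ``co-largeness plus ordinary homogeneity'' cannot control the new preimage's relations to the whole infinite set $W$. Reducing all of these interactions to \emph{finite} sets is precisely what \textbf{back} and \textbf{fo} achieve, and they are not consequences of co-largeness: producing such an $f$ is the iterated-amalgamation construction of Claim~\ref{claim-inside-proof}, which is one of the two places SAHG is genuinely consumed.

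Second, the object on which your whole argument rests --- a surjective endomorphism $g_s$ with generic fibres which moreover extends the prescribed map $\sigma=f^{-1}s$ on $W$ --- is never constructed; you flag this yourself as ``the delicate point'', but it is the actual content of the theorem, not bookkeeping. In the paper the analogous object, the \textbf{rth} surjection $g$ of Claim~\ref{claim-g-exists}, is obtained by a nontrivial Fra\"{i}ss\'{e} construction in an expanded signature with a partial function symbol, and crucially it is built \emph{free}, with no prescribed infinite seed; your variant must reconcile fibre-genericity with an adversarially chosen $\sigma$ on $W$ and with the cross-relations between $W$ and $X\setminus W$, which are frozen when $f$ is chosen, before $s$ is seen. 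Nothing in the proposal shows this is possible at the stated level of generality, and the counterexample above shows that even when it is, the neighbourhood of $s$ must constrain $e$ on $(N(z)\cap W)f^{-1}$ for $z\in G\setminus W$ --- which is only a finite requirement if $W$ is placed so that such interactions are finite, i.e.\ if $f$ already has the \textbf{fo}/\textbf{back}-type properties. So the two pillars of the proposal (the inclusion $[s;F]\subseteq f(B\cap\Aut(\mathbb{X}))g_s$ and the existence of $g_s$) are respectively false and unproved, and repairing them leads back to the paper's machinery; the paper's arrangement, with the $s$-dependence in the middle automorphism and $f,g$ generic and fixed, exists exactly to avoid these obstacles.
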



In Section~\ref{section-apex}, we combine the results of the earlier sections to prove Corollary~\ref{apex}.
Defying the consensus of most of the literature in model theory, we do not require the edge relation of a directed graph to be antisymmetric (but do exclude loops), and remark that Proposition~\ref{prop:non-unique-top} can be used to prove that the statement does not hold for the  random directed graph without undirected edges.

\begin{restatable}{lcorollary}{apex}
\label{apex}
  The pointwise topology is the unique Polish semigroup topology on the endomorphism monoids of the following structures:
  \begin{enumerate}[\rm (i)]
      \item the random graph;
      \item the random directed graph;
      \item the random strict partial order;
      \item the graph \(\omega \mathbb{K}_n\) for any  $n\in \N\setminus\{0\}$;
      \item the graph \(n \mathbb{K}_\omega\) with loops for any $n\in (\N\setminus\{0\})\cup \{\omega\}$, i.e., the random equivalence relations with \(n\) countably infinite equivalence classes;
  \end{enumerate}
  as well as (i)-(iv) with all the loops included.
\end{restatable}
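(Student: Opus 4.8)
The plan is to combine Theorems~\ref{arsfacere} and~\ref{theomodeltheorypropertyx} so as to sandwich every Polish semigroup topology between two copies of the pointwise topology. Fix one of the structures $\mathbb{A}$ from the list and recall from the introduction that $\End(\mathbb{A})$ is a closed submonoid of $\N^\N$, so that the pointwise topology is itself a Polish semigroup topology on $\End(\mathbb{A})$. It therefore suffices to prove that the pointwise topology is simultaneously the least element and a maximal element of the collection of Polish semigroup topologies on $\End(\mathbb{A})$ ordered by inclusion: if $\T$ is an arbitrary such topology, leastness gives that the pointwise topology is contained in $\T$, and maximality then forbids strict containment, forcing equality.

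For leastness I would verify that each $\mathbb{A}$ in the list is a countable $\omega$-categorical homogeneous arsfacere structure with no algebraicity; these facts are exactly what is recorded in Theorem~\ref{cor-zariski-endomorphisms}. Applying Theorem~\ref{arsfacere} with $S=\End(\mathbb{A})$ then yields that the Zariski and pointwise topologies coincide on $\End(\mathbb{A})$. Since the Zariski topology is contained in every Hausdorff semigroup topology (\cite[Propositions 2.1 and 2.2]{Elliott2019aa}) and every Polish topology is Hausdorff, the pointwise topology is contained in every Polish semigroup topology on $\End(\mathbb{A})$, as required.

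For maximality I would verify that each $\mathbb{A}$ is $\omega$-categorical, homogeneous, homomorphism-homogeneous, and that its age enjoys the strong amalgamation property with homomorphism gluing. Theorem~\ref{theomodeltheorypropertyx} then shows that $\End(\mathbb{A})$ equipped with the pointwise topology has property \textbf{X} with respect to $\Aut(\mathbb{A})$. As the automorphism group of a countable structure, $\Aut(\mathbb{A})$ is a Polish (indeed closed) subgroup of $\End(\mathbb{A})$, so \cite[Theorem 3.1]{Elliott2019aa} applies to give that the pointwise topology is maximal among the Polish semigroup topologies on $\End(\mathbb{A})$. Combining the two paragraphs, any Polish semigroup topology $\T$ on $\End(\mathbb{A})$ contains the pointwise topology by leastness and must therefore equal it by maximality, which proves uniqueness. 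The remaining clause about loops is handled by running the identical argument for the loop-inclusive structures, and the directed-graph convention is needed precisely because Proposition~\ref{prop:non-unique-top} shows uniqueness fails for the random directed graph without undirected edges.

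I expect the main obstacle to be the case-by-case verification of the hypotheses of Theorem~\ref{theomodeltheorypropertyx}, in particular homomorphism-homogeneity together with the strong amalgamation property with homomorphism gluing, for the less generic structures in the list, namely $\omega\mathbb{K}_n$, the random equivalence relations $n\mathbb{K}_\omega$, and all of the loop-augmented variants; the random graph, the random directed graph, and the random strict partial order should be comparatively routine, since their ages are free or strong amalgamation classes. A secondary point requiring care is confirming that the arsfacere hypothesis (used for leastness) and the homomorphism-gluing hypothesis (used for maximality) are both available for each single structure on the list, rather than merely for overlapping but distinct sub-collections.
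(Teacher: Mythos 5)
Your overall strategy coincides with the paper's: exhibit the pointwise topology as both the least and a maximal Polish semigroup topology, then conclude uniqueness. The gap is that the two theorems you invoke cannot cover all of the structures on the list, and in the missing cases their hypotheses genuinely fail rather than being merely tedious to verify. For leastness, Theorem~\ref{arsfacere} requires the structure to be arsfacere and to have no algebraicity, and this fails for most of the list: the random strict partial order is \emph{not} arsfacere (the paper states this explicitly in the proof of Theorem~\ref{cor-zariski-endomorphisms}, where it instead constructs the witnesses of Lemma~\ref{lem-zariski-pointwise}(ii)--(iii) by hand); none of the loop-augmented structures, nor \(n\mathbb{K}_\omega\) with loops, is arsfacere, since Definition~\ref{de-arsfacere}(i) demands \((x,\ldots,x)\notin E^{\mathbb{A}}\) while the only relation of each of these structures contains all loops; and \(\omega\mathbb{K}_n\) for \(n\geq 2\) has algebraicity (the pointwise stabiliser of a vertex has a finite orbit, namely the other \(n-1\) vertices of its clique) and does not appear in Theorem~\ref{cor-zariski-endomorphisms} at all. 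This is why the paper needs three separate minimality results: Theorem~\ref{cor-zariski-endomorphisms} for the loopless structures (i)--(iii), Theorem~\ref{theorem-the-first-theorem} (a different mechanism, via constant maps and Lemma~\ref{lem-luke-somenumber}) for the structures with loops including case (v), and Proposition~\ref{prop-luke-9} for \(\omega\mathbb{K}_n\).

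For maximality, Theorem~\ref{theomodeltheorypropertyx} requires the age to have the strong amalgamation property with homomorphism gluing, which in particular implies the strong amalgamation property; the age of \(\omega\mathbb{K}_n\) fails strong amalgamation for \(n\geq 2\) (amalgamating two copies of \(\mathbb{K}_n\) over a single vertex would force a clique of size \(2n-1\)), and the paper remarks explicitly, just after restating Theorem~\ref{theomodeltheorypropertyx}, that Proposition~\ref{prop-xxx} is not a special case of it for exactly this reason. So case (iv) and its loop variant cannot be obtained from your plan; they require the separate wreath-product argument of Proposition~\ref{prop-xxx}. Your maximality argument for (i)--(iii), (v), and the loop variants of (i)--(iii) is correct and matches Corollary~\ref{cor-property-x}. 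One small correction: \(\Aut(\mathbb{A})\) is not a \emph{closed} subgroup of \(\End(\mathbb{A})\) (for \(\omega\)-categorical structures its closure contains proper self-embeddings), but it is a Polish subgroup in the subspace topology, which is all that Theorem~\ref{lem-luke-0}(i) requires, so that step survives.
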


We will observe in Proposition \ref{prop-infinitely-many-wreath-product} that unlike its finitary counterparts, the endomorphism monoid of $\omega \mathbb{K}_{\omega}$ has infinitely many Polish semigroup topologies.

We then show, in Theorem~\ref{thm-final},  that the endomorphism monoids of several structures have the property that every homomorphism into a second countable topological semigroup is continuous; this property is called \textit{automatic continuity (with respect to the class of second countable topological semigroups)}. This is achieved by lifting the same property from the corresponding automorphism groups using property $\mathbf{X}$. 
 Such structures include: 
 the random graph; the random directed graph; and any
 random equivalence relation with infinite equivalence classes.
Automatic continuity for semigroups, groups, and clones, has been the subject of intensive research in recent years; see
\cite{Barbina2007,Bodirsky2014,Evans1990,Herwig1998,Lascar1991,Paolini2019,Paolini2020,  Rubin1994,Truss1989}. 
 
In Section \ref{section-clones}, we show how to extend the results about endomorphism monoids to clones of polymorphisms on the same structures.


\section{Preliminaries}

In this section, we introduce the notions and terminology required in later sections that are related to structures
and model theory.

\subsection{Functions} If $f: X \to Y$ is a partial function, then $f$ is a subset of $X\times Y$. 
The \textit{image of $f$} is the set $\im(f) = \set{y\in Y}{(x, y) \in f}$ and the 
\textit{domain of $f$} is $\dom(f) = \set{x\in X}{(x, y) \in f}$. If $f: X\to Y$ is a partial function, and $Z$ is a subset of $X$, then the \textit{restriction}  $f{\restriction_Z}$ of $f$ 
is just $f \cap (Z\times Y)$; and we say that $f$ is an \textit{extension} of $f{\restriction_Z}$. If $f, g: X \to Y$ are partial functions and $f\subseteq g$, then $f$ is the restriction of $g$ to $\dom(f)$ and $g$ is an extension of $f$.
If $f: X \to Y$ and $g: Z \to T$ are partial functions, then $f \cup g \subseteq (X\cup Z) \times (Y \cup T)$ is a partial function if and only if $f{\restriction_{\dom(f) \cap \dom(g)}} = g{\restriction_{\dom(f) \cap \dom(g)}}$. In particular, if $X \cap Z=\varnothing$, then $f\cup g$ is always a partial function.

The \emph{wreath product} of a semigroup $S$ with the monoid $N^N$ of all functions from a set $N$ to itself is the semigroup 
  \[
  S \wr N ^ N =\set{((f_i)_{i\in N}, g)}{f_i \in S \text{ for all } i\in N, g \in N ^ N}
  \]
  with multiplication
  \[
  ((f_i)_{i\in N}, g)((h_i)_{i\in N}, k)
  = 
  ((f_{i}h_{(i)g})_{i\in N}, gk).
  \]
 If $S$ is a topological semigroup, then the topology on $S \wr N ^ N$ is the product topology on \(S^N\times N^N\) where $N ^N$ has the pointwise topology (which is discrete in the case that $N$ is finite). This is a semigroup topology on the wreath product. If $S$ is Polish and $N$ is countable, then $S\wr N ^ N$ is Polish  also.

\subsection{Structures}
A \textit{signature} is a collection of names for constants, relations, and operations together with an associated finite arity for each relation and operation name. A \textit{structure}, or $\sigma$-structure, $\mathbb{X}$ in the signature $\sigma$ is a set $X$ together with constants, relations, and operations on $X$ of arity corresponding to those associated with each constant, relation, and operation name in $\sigma$.
We will use
blackboard letters such as $\mathbb{X}$, to denote structures, with the
exceptions of $\N$, $\Q$, and $\R$. The same letter in plain font will then
denote the domain of the structure, for example $X$ is the domain set of the
structure $\mathbb{X}$.
If $A$ is a symbol in the signature of $\mathbb{X}$, we will denote the corresponding constant, relation, or operation of $\mathbb{X}$ by $A^{\mathbb{X}}$. A \textit{relational structure} is a structure without operations and constants.
For example, the rational numbers with the usual order relation $\leq$ is a relational structure, and a monoid can be seen as a non-relational structure.

If $\mathbb{X}$ is a structure and $Y \subseteq X$, then the \emph{substructure generated} by $Y$ is the least substructure of $\mathbb{X}$ containing $Y$, all of the constants of $\mathbb{X}$, and closed under all of the operations in the signature of $\mathbb{X}$.
A structure $\mathbb{X}$ is \textit{finitely generated} if there is a finite $F \subseteq X$ such that the substructure of $\mathbb{X}$ generated by $F$ is $\mathbb{X}$ itself.

If $n\in \N$ and $n\geq 1$, then the \textit{$n$-th power} of a structure $\mathbb{X}$ with signature $\sigma$, denoted by $\mathbb{X}^n$, is a structure on $X^n$ defined as follows: 
\begin{enumerate}
    \item 
    $c^{\mathbb{X}^n} = (c^\mathbb{X}, \ldots, c^\mathbb{X}) \in X^n$ for every constant name $c$ in $\sigma$;
    \item 
    $((x^1_1, \ldots, x^1_{n }), \ldots,  (x^{m }_1, \ldots, x^{m}_{n })) \in R^{\mathbb{X}^n}$ if $(x^1_i, \ldots, x^{m }_i) \in R^\mathbb{X}$ for all $i \in \{1, \ldots, n \}$, all $m$-ary relation names  $R$ in $\sigma$, and all $m\in \N$;
    \item 
    $((x^1_1, \ldots, x^1_{n}), \ldots,  (x^{m}_1, \ldots, x^{m }_{n}))f^{\mathbb{X}^n} = ( (x^1_1, \ldots, x^{m  }_1)f^{\mathbb{X}}, \ldots, (x^1_{n}, \ldots, x^{m}_{n })f^{\mathbb{X}}) \in X^n$ 
    for every $m$-ary operation
    name $f$ in $\sigma$ and for every $m\in \N$.
\end{enumerate} 

Let $\mathbb{X}$ and $\mathbb{Y}$ be structures in some signature $\sigma$. Then a map $\psi: \mathbb{X} \to \mathbb{Y}$ is a \textit{homomorphism} if the following hold: 
\begin{enumerate}
\item $(c^\mathbb{X})\psi = c^\mathbb{Y}$ for every constant name $c$ in $\sigma$;
\item for every \(n \in \N\) and every $n$-ary relation name $R$ in $\sigma$: if $\x \in R^\mathbb{X}$ for some $\x \in X^n$, then $(\x)\psi \in  R^\mathbb{Y}$;
\item for every \(n \in \N\) and every $n$-ary operation name $f$ in $\sigma$:  $((\x)f^\mathbb{X})\psi = ((\x)\psi)f^\mathbb{Y}$ for every $\x \in X^n$.
\end{enumerate}
The homomorphism $\psi$ is an \textit{embedding} if it is injective and $\x \in R^\mathbb{X}$  if and only if $(\x)\psi \in  R^\mathbb{Y}$ for every  $\x \in X^n$ and every $n$-ary relation symbol where $n\in \N$. 
A surjective embedding is an \textit{isomorphism}. A homomorphism $\psi : \mathbb{X} \to \mathbb{X}$ is an \textit{endomorphism of $\mathbb{X}$}; if $\psi$ is an embedding, then it is a \emph{self-embedding of $\mathbb{X}$; and if $\psi$ is an isomorphism, then it is an \emph{automorphism of $\mathbb{X}$}.}

Since the composition of endomorphisms of a relational structure $\mathbb{X}$ is again an endomorphism, the collection $\End(\mathbb{X})$, of all endomorphisms of  $\mathbb{X}$, is a monoid. Similarly, the collections  $\Emb(\mathbb{X})$ of all self-embeddings and $\Aut(\mathbb{X})$ of all automorphisms are monoids; the latter is in fact a group. If $X$ is a set, then we denote by $\Inj(X)$ the monoid of injective functions from $X$ to $X$.
It is straightforward to verify
that $\Emb(\mathbb{X})$ is also a closed submonoid of each of the monoids $\Inj(X)$, $\End(\mathbb{X})$, and $X ^ X$ with the pointwise topology. 

A \textit{sentence} is a 
first-order formula without free variables and a \textit{theory} 
is a set of sentences in some fixed signature \(\sigma\). 
A \textit{model} \(\mathbb{M}\) of a theory \(T\) is a 
\(\sigma\)-structure such that each sentence in \(T\) is true in 
\(\mathbb{M}\). A theory is \textit{\(\omega\)-categorical} if it 
has a unique countably infinite model up to isomorphism. A 
structure is \(\omega\)-categorical if its theory is. By 
Ryll-Nardzewski's theorem (see~\cite{Hodges1997aa})  
a countable structure $\mathbb{A}$ is  
\textit{\(\omega\)-categorical} if and only if for every $k\in 
\N$, the number of orbits of the action of $\Aut(\mathbb{A})$ on the $k$-element subsets of $A$ is finite.

If $\mathbb{A}$ is a structure, and $F$ is a subset of $A$, then 
the \textit{pointwise stabiliser} of $F$ in $\Aut(\mathbb{A})$ is defined to be
\[ \Aut(\mathbb{A})_F := \{f \in \Aut(\mathbb{A}) \colon (x)f = x \text{ for all } x\in F\}.\]
An \(\omega\)-categorical structure \(\mathbb{A}\) has \emph{no algebraicity} if 
every orbit on $A\setminus F$ of the pointwise stabiliser $\Aut(\mathbb{A})_F$ is infinite for every finite subset $F$ of $A$.


\subsection{Fra\"{i}ss\'{e} theory}
We define the following properties for a class $\mathcal{K}$ of finitely generated structures in a fixed countable signature \(\sigma\).
\begin{enumerate}
  \item[\textbf{HP:}]
        \textit{hereditary property}:
        if $\mathbb{A}$ is a substructure of $\mathbb{B}$ and $\mathbb{B}\in \mathcal{K}$, then
        $\mathbb{A}\in\mathcal{K}$.

  \item[\textbf{JEP:}]
        \textit{joint embedding property}:
        if $\mathbb{A}, \mathbb{B}\in \mathcal{K}$, then there exists $\mathbb{C}\in \mathcal{K}$ such that both $\mathbb{A}$ and $\mathbb{B}$ embed as substructures of $\mathbb{C}$.

  \item[\textbf{AP:}]
        \textit{amalgamation property}: if $\mathbb{A}, \mathbb{B}_1, \mathbb{B}_2\in \mathcal{K}$, 
         $f_1: \mathbb{A} \to \mathbb{B}_1$ and $f_2: \mathbb{A} \to \mathbb{B}_2$ are embeddings, then there exists
        $\mathbb{C}\in \mathcal{K}$ and embeddings $g_1: \mathbb{B}_1 \to \mathbb{C}$ and $g_2: \mathbb{B}_2 \to \mathbb{C}$ such
        that $f_1\circ g_1 = f_2 \circ g_2$.
        
        \item[\textbf{SAP:}]
        \textit{strong amalgamation property} is the same as \textbf{AP} with the additional property that \(\im(g_1) \cap \im(g_2) = (A) f_1 g_1 = (A)f_2 g_2\).
        
        \item[\textbf{FAP:}]
        if \(\sigma\) is a relational signature, then 
        the \textit{free amalgamation property} is the same as \textbf{SAP} with the additional property that if 
        \( (x_1, \ldots, x_n) \in R^{\mathbb{C}}\) for some \(R \in \sigma\), then either \(x_1, 
        \ldots, x_n \in \im(g_1)\) or \(x_1, \ldots, x_n \in \im(g_2)\).
\end{enumerate}

A class $\mathcal{K}$ of finitely generated structures in a fixed countable signature is called a \textit{Fra\"iss\'e class} if $\mathcal{K}$ is closed under taking isomorphims, has countably many isomorphism classes, and satisfies \textbf{HP}, \textbf{JEP}, and \textbf{AP}.
Some examples of Fra\"iss\'e classes, among many others, include: finite graphs, finite partial orders, finite linear orders, non-trivial finite  Boolean algebras, and finite metric spaces with rational distances.

The \emph{age} of a structure \(\mathbb{A}\), denoted by \(\Age(\mathbb{A})\),  is the class of all finitely generated structures which embed into \(\mathbb{A}\).
A structure $\mathbb{X}$ is \textit{homogeneous} if every isomorphism between finitely generated substructures of $\mathbb{X}$ can be extended to an automorphism of $\mathbb{X}$.
Associated to every Fra\"iss\'e class $\mathcal{K}$ is a \textit{Fra\"iss\'e limit} $\mathbb{K}$ which is the unique, up to isomorphism, countable structure in the same signature as the structures in $\mathcal{K}$ such that $\mathbb{K}$ is homogeneous and \(\Age(\mathbb{K})\)
equals
$\mathcal{K}$; see~\cite{Fraisse2000aa} and~\cite[Theorem~6.1.2]{Hodges1997aa} for more
details. 
Examples of Fra\"iss\'e limits include the countably infinite random graph~\cite{Rado1964aa} and its directed counterpart, the linear order of rational numbers $\Q$, the random (reflexive) partial order and its strict counterpart, 
the countably infinite atomless Boolean algebra (see, for example, ~\cite[Theorem 10]{givant2008introduction}), and the rational Urysohn space (see, for example, ~\cite{melleray2008some}.
These examples are the Fra\"iss\'e limits of the classes of all: finite graphs, finite directed graphs, finite linear orders, finite partial orders and finite strict partial orders, non-trivial finite Boolean algebras, and finite metric spaces with rational distances, respectively.
For the purpose of this paper, \emph{a directed graph} is a binary relation which is anti-reflexive and not necessarily symmetric. 

The definition of \emph{homomorphism-homogeneity} is similar to that of homogeneity: if \(\mathbb{X}\) is a structure, then every homomorphism between finitely generated substructures of \(\mathbb{X}\) can be extended to an endomorphism, see~\cite{Cameron2006aa}.

Another structure we will consider in this paper is \(n \mathbb{A}\), that is the disjoint union of \(n\) copies of a relational structure \(\mathbb{A}\) where $n$ is any cardinal.  We are concerned in particular with \(n \mathbb{K}_m\), where \(\mathbb{K}_m\) is the complete graph on \(m\) vertices where $m\in \N\cup \{\omega\}$. 
Clearly, \(n \mathbb{K}_m\) is homogeneous for all \(n, m \leq \omega\).

If \(R\) is a binary relation on a set \(A\), then a \emph{loop} is any pair \( (x, x) \in R\).  The class of finite graphs where every vertex has a loop is a Fra\"{i}ss\'{e} class, and so, in the same way as above, we obtain the random graph with loops. Similarly, we may construct the following structures as Fra\"{i}ss\'{e} limits -- the random reflexive partial order \( (P,  \leq)\), the random directed graph with loops, and \(n \mathbb{K}_m\) with loops.


\section{Minimal topologies}\label{section-fraisse}

In this section, we prove Theorem~\ref{arsfacere} and two further results (Theorems~\ref{theorem-the-first-theorem} and~\ref{cor-zariski-endomorphisms}) establishing minimal semigroup topologies on certain monoids of endomorphisms of relational structures. 
In Theorem~\ref{theorem-the-first-theorem}, we show that the pointwise topology is the minimal \(T_1\) topology that is semitopological (defined below) for the endomorphism monoids of a number of well-known homogeneous relational structures with loops. 
It will follow that the Zariski topology on each of these endomorphism monoids is the pointwise topology. 
In Theorem~\ref{cor-zariski-endomorphisms}, we apply Theorem~\ref{arsfacere} to show that the Zariski topology and the pointwise topology coincide for the endomorphism monoids of  several homogeneous structures without loops. As a consequence, every one of these structures has the property that every Hausdorff semigroup topology on its endomorphism monoid contains the pointwise topology. 

It is routine to show that there are relational structures $\mathbb{A}$ where the Zariski topology on $\End(\mathbb{A})$ is strictly contained in the pointwise topology. For example, the monoid $M$ generated by the constant transformations form a countable closed submonoid of $\N ^ \N$ and $M$ is a right zero semigroup (a semigroup satisfying $xy=y$ for all elements $x,y$) with identity adjoined. It follows that $M$ is the endomorphism monoid of some countable relational structure $\mathbb{A}$. The pointwise topology induces the discrete topology on $M$. 
Since the Zariski topology is contained in every Hausdorff semigroup topology, it is in particular  contained in the topology generated by the cofinite topology on the constant transformations and the singleton consisting of the identity. Hence, the Zariski topology is not discrete. 

\begin{question}
  Is there an \(\omega\)-categorical relational structure \(\mathbb{A}\) such that the topology of pointwise convergence on \(\End(\mathbb{A})\) is strictly finer than the Zariski topology? 
\end{question}

If $S$ is a semigroup and $x\in S$, then we define $\lambda_x: S\to S$ and $\rho_x: S \to S$  by 
$(y)\lambda_x = xy$ and $(y)\rho_x = yx$ for all $y\in S$. 
A semigroup $S$ with a topology $\T$ on $S$ is called \textit{semitopological}
if $\lambda_x$ and $\rho_x$ are continuous for every $x\in S$. Every topological semigroup is semitopological, but the converse is not true.

To prove the first of the main theorems in this section, we require the following lemma from~\cite{Elliott2019aa}.

\begin{lem}[cf. Lemma 5.1 in \cite{Elliott2019aa}]\label{lem-luke-somenumber}
    Let $X$ be an infinite set, and let $S$ be a subsemigroup of $X ^ X$ such that
  $S$ contains all of the constant  transformations, and for every $x\in X$
  there exists $f_{x}\in S$ such that $(x) f_{x} ^ {-1} = \{x\}$ and $(X)f_{x}$
  is finite.  If $\T$ is a topology which is semitopological for
  $S$, then the following are equivalent:
  \begin{enumerate}[\rm (i)]
    \item $\T$ is Hausdorff;
    \item $\T$ is $T_1$;
    \item $\{f\in S: (y)f = z\}$ is open in
          $\T$ for all $y, z \in X$;
    \item $\{f\in S: (y)f = z \}$ is closed in
          $\T$ for all $y, z \in X$.
  \end{enumerate}
\end{lem}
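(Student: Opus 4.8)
The plan is to establish the cycle (i) $\Rightarrow$ (ii) $\Rightarrow$ (iv) $\Rightarrow$ (iii) $\Rightarrow$ (i), exploiting the two special families of elements the hypotheses provide: the constant transformations (write $c_z$ for the one with constant value $z$, which lies in $S$) and the finite-image maps $f_x$. Throughout I would write $U_{y,z} = \{f \in S : (y)f = z\}$ and record the two identities $c_y f = c_{(y)f}$ and $(y)(f f_z) = ((y)f)f_z$ that drive the argument; the first holds because $(w)(c_y f) = (y)f$ for all $w$, and the second is just associativity of the right action.

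The implication (i) $\Rightarrow$ (ii) is immediate. For (ii) $\Rightarrow$ (iv), I would note that the left translation $\lambda_{c_y}$ sends $f$ to $c_y f = c_{(y)f}$, so that $U_{y,z} = \lambda_{c_y}^{-1}(\{c_z\})$; as $\lambda_{c_y}$ is continuous because $\T$ is semitopological, and $\{c_z\}$ is closed because $\T$ is $T_1$, the set $U_{y,z}$ is closed. The implication (iii) $\Rightarrow$ (i) is also short: the sets $U_{y,z}$ form a subbasis for the pointwise topology on $S$, so (iii) forces the pointwise topology to be contained in $\T$; since the pointwise topology is Hausdorff (being the subspace topology from the Hausdorff space $X^X$) and any topology refining a Hausdorff one is again Hausdorff, $\T$ is Hausdorff.

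The heart of the argument, and the step I expect to be the main obstacle, is (iv) $\Rightarrow$ (iii), which is precisely where the finite-image hypothesis on the $f_x$ is indispensable. The idea is to pass to the right translation $\rho_{f_z}$. Since $(z)f_z^{-1} = \{z\}$, one has $((y)f)f_z = z$ if and only if $(y)f = z$, so the condition defining $U_{y,z}$ is preserved after multiplying by $f_z$. Crucially, $\im(f_z)$ is a finite set $\{a_1, \ldots, a_n\}$ containing $z$, and the value $(y)(f f_z) = ((y)f)f_z$ always lies in it; hence the complement can be written as the \emph{finite} union
\[
S \setminus U_{y,z} = \{f \in S : (y)(f f_z) \neq z\} = \bigcup_{a_i \neq z} \rho_{f_z}^{-1}(U_{y, a_i}).
\]
By (iv) each $U_{y,a_i}$ is closed, and $\rho_{f_z}$ is continuous, so this finite union is closed and therefore $U_{y,z}$ is open. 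Without the finiteness of $\im(f_z)$ one would only obtain the complement as an infinite union of closed sets, which need not be closed; collapsing the range of $(y)(f f_z)$ to a finite set while preserving the defining condition is exactly the work done by $f_z$. Finally I would remark that running the same identity with (iii) in place of (iv) gives (iii) $\Rightarrow$ (iv) directly, so that (iii) and (iv) are in fact interchangeable, although this is not needed to close the cycle.
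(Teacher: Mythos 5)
Your proof is correct: constants plus $T_1$ give closedness of $U_{y,z}$ via the left translation $\lambda_{c_y}$, the finite-image maps $f_z$ convert closedness into openness via the right translation $\rho_{f_z}$ and the finite union over $\im(f_z)\setminus\{z\}$, and containment of the Hausdorff pointwise topology closes the cycle, so all four conditions are equivalent. The paper does not itself prove this lemma (it is quoted from Lemma 5.1 of \cite{Elliott2019aa}), and your argument uses the two hypotheses exactly as that result intends, so there is nothing to correct.
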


In the next result we use Lemma~\ref{lem-luke-somenumber} to show that the pointwise topology is in some sense minimal for the endomorphism monoids of several Fra\"iss\'e limits. 

\begin{theorem}\label{theorem-the-first-theorem}
  The pointwise topology is contained in every \(T_1\) topology that is semitopological for the endomorphism monoid of the following structures:
  \begin{enumerate}[\rm (i)]
    \item \((\mathbb{Q}, \leq)\);

    \item the random reflexive partial order;
    
    \item the random graph with loops;
    
    \item the random directed graph with loops;
    
    \item \(n \mathbb{K}_m\) with loops for every \(1\leq n, m \leq \omega\) such that either \(n = \omega\) or \(m = \omega\).
  \end{enumerate} 
\end{theorem}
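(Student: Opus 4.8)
The plan is to reduce the whole statement to Lemma~\ref{lem-luke-somenumber}. For each structure $\mathbb{X}$ in (i)--(v), write $X$ for its (countably infinite) domain and $S = \End(\mathbb{X})\leq X^X$. I will verify the two hypotheses of Lemma~\ref{lem-luke-somenumber}: that $S$ contains every constant transformation, and that for every $x\in X$ there is an endomorphism $f_x\in S$ with finite image such that $(x)f_x^{-1} = \{x\}$. Granting this, if $\T$ is any $T_1$ topology that is semitopological for $S$, then the implication (ii)$\Rightarrow$(iii) of the lemma gives that $\set{f\in S}{(y)f = z}$ is $\T$-open for all $y,z\in X$. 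Since these sets are exactly the subbasic open sets of the pointwise topology on $S$, the pointwise topology is contained in $\T$, which is the assertion of the theorem.

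The first hypothesis is immediate in every case: each of these structures is reflexive in the sense that every element carries a loop (for $(\mathbb{Q},\leq)$ and the random reflexive partial order this is reflexivity of $\leq$, and in the remaining cases the loops are built in). Hence a constant map sends every relational tuple to one of the form $(a,\ldots,a)$, which is again a relation because of the loop at $a$, so every constant transformation is an endomorphism.

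The substance of the argument is the construction of the maps $f_x$, and I would treat the families separately, the common idea being to collapse $\mathbb{X}$ onto a fixed finite substructure in a way that isolates $x$. For the random graph with loops I would pick an edge $\{b,c\}$ and send $x\mapsto b$ and every other vertex to $c$: since $\{b,c\}$ with its loops is a complete reflexive graph, every map into it is a homomorphism, its image is finite, and $x$ is the unique preimage of $b$. The random directed graph with loops is identical once one locates two vertices $b,c$ with all of $(b,b),(c,c),(b,c),(c,b)$ present, which exists by universality of the Fra\"iss\'e limit. For $n\mathbb{K}_m$ with loops the same two-point collapse works when $m\geq 2$ (choosing $b,c$ in the clique containing $x$), and when $m=1$ the only edges are loops, so every self-map is an endomorphism and one simply sends $x$ to itself and everything else to a fixed second vertex. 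For $(\mathbb{Q},\leq)$ the required $f_x$ is the monotone step function equal to $a$ below $x$, to $b$ at $x$, and to $c$ above $x$, for a fixed chain $a<b<c$.

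The genuinely delicate case, and the main obstacle I anticipate, is the random reflexive partial order, where incomparable elements prevent a two- or three-valued collapse from being order-preserving. Here I would map onto a four-element diamond with $a<b<c$, $a<d<c$, and $b,d$ incomparable, sending $x\mapsto b$, every $y<x$ to $a$, every $y>x$ to $c$, and every $y$ incomparable to $x$ to $d$. Checking order-preservation amounts to verifying that each comparable pair $u\leq v$ yields an image pair lying in the diamond order; the point is that the comparabilities that can actually occur force exactly the relations $a\leq b$, $a\leq c$, $a\leq d$, $b\leq c$, $d\leq c$ supplied by the diamond, whereas a comparability whose image would lie between $b$ and $d$ can never arise (for instance $u\leq v$ with $u$ incomparable to $x$ and $v=x$ would force $u\leq x$, a contradiction). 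Since the diamond embeds into the random partial order by universality, this $f_x$ exists, has finite image, and has $x$ as the unique preimage of $b$. With all hypotheses of Lemma~\ref{lem-luke-somenumber} verified in every case, the theorem follows as above.
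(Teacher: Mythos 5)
Your overall strategy is exactly the paper's: reduce everything to Lemma~\ref{lem-luke-somenumber}, observe that constant maps are endomorphisms because every structure in the list is reflexive (all loops present), construct finite-image endomorphisms $f_x$, and then use the implication (ii)$\Rightarrow$(iii) of the lemma to conclude that the subbasic sets $\set{f\in S}{(y)f=z}$ are open. However, there is a concrete error in your verification of the lemma's second hypothesis. The lemma requires $(x)f_x^{-1}=\{x\}$, i.e., the fiber of $f_x$ over the point $x$ \emph{itself} must be exactly $\{x\}$; in particular $f_x$ must fix $x$. Your maps do not do this: you send $x$ to the endpoint $b$ of an arbitrarily chosen edge $\{b,c\}$ (respectively to the middle element $b$ of a fixed chain $a<b<c$, or to the element $b$ of your diamond), and you only check that $x$ is the unique preimage of $b$. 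Whenever $b\neq x$, the fiber of $f_x$ over $x$ is empty, not $\{x\}$, so the hypothesis of Lemma~\ref{lem-luke-somenumber} --- which you invoke as a black box from \cite{Elliott2019aa} --- is not satisfied, and your closing step ``with all hypotheses verified, the theorem follows'' does not go through. (The weaker condition ``$(b)f_x^{-1}=\{x\}$ for some $b$'' would in fact suffice, but only by re-proving the lemma, which you do not do; the only case where your witness genuinely satisfies the stated hypothesis is $m=1$ in (v), where you send $x$ to itself.)

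The repair is immediate and is precisely what the paper does: take $b=x$ in every construction. For the random graph with loops set $(x)f_x=x$ and send every other vertex to a fixed neighbour $c$ of $x$; similarly for the directed graph (choose $c$ with edges to and from $x$) and for $n\mathbb{K}_m$ with $m\geq 2$ (choose $c$ in the clique of $x$); for $(\mathbb{Q},\leq)$ take $a=x-1$, $b=x$, $c=x+1$; and for the reflexive partial order realize your diamond with $b=x$, which is possible by the extension property of the random poset. With this change your argument is correct and essentially identical to the paper's; your diamond for case (ii) is a valid alternative to the paper's device of factoring through $(\mathbb{Q},\leq)$ via injective homomorphisms $P\to\mathbb{Q}\to P$ fixing $x$. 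Incidentally, your side claim that a three-valued collapse cannot work for the partial order is false: mapping $x$ to itself, every $y>x$ to a fixed $c>x$, and every other element (below $x$ or incomparable to $x$) to a fixed $a<x$ is order-preserving, since an element mapped to $x$ or to $c$ can only lie below elements that are $\geq x$; this three-valued map is in effect what the paper's composite through $(\mathbb{Q},\leq)$ produces.
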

\begin{proof}
  Each case of the theorem will follow from Lemma~\ref{lem-luke-somenumber}.
  First,  note that if \(\mathbb{X}\) is one of the five structures appearing in the theorem, then every constant map is an endomorphism of \(\mathbb{X}\). Hence it remains to show that for every \(x \in X\) there exists \(f_x \in \End(\mathbb{X})\) such that \((x)f_x ^ {-1} = \{x\}\) and
  \((X)f_x\) is finite.
  \medskip
  
  \noindent\textbf{(i).}
  If \(x \in \mathbb{Q}\), then $f_x\in\EndQ$ might be defined by
  \[
    (y)f_x =
    \begin{cases}
      x - 1 & \text{if } y < x  \\
      x     & \text{if } y = x  \\
      x + 1 & \text{if } y > x. \\
    \end{cases}
  \]
  
  \noindent\textbf{(ii).}
  Let $(P, \leq)$ be the random reflexive partial order. There exist injective homomorphisms $g\colon (P, \leq)\to (\mathbb Q, \leq)$ and $h\colon (\mathbb Q, \leq) \to (P, \leq)$; given $x\in P$, we may moreover assume that $(x)gh=x$ by the transitivity of the automorphism group of $(P, \leq)$. Then $gf_{g(x)}h$, where $f_{g(x)}$ is as in (i), is the required  endomorphism.
  
  \noindent\textbf{(iii).} Let \(x\) be a vertex of the random graph with loops, and let \(y\) be any other vertex adjacent to \(x\). Then \(f_x\) defined by \( (x)f_x = x\) and \( (z)f_x = y\) for every \(z \neq x\) is the required endomorphism.
  \medskip
  
  \noindent\textbf{(iv).} The functions \(f_x\) can be defined in the same way as in (iii) where $y$ is any vertex such that $(y,x)$ and $(x,y)$ are edges.
  \medskip
  
  \noindent\textbf{(v).} If \(m > 1\), the functions \(f_x\) can be defined in the same way as in (iii). If \(m = 1\), then \(n = \omega\), and so \(\End(\omega \mathbb{K}_1) = X ^ X\), where $X$ is the set of vertices of $\omega \mathbb{K}_1$, contains every transformation on the domain of \(\omega \mathbb{K}_1\), and hence contains the required functions $f_x$ for every $x\in X$.
\end{proof}

Next, we define a condition on relational structures $\mathbb{A}$ that will permit us to establish a sufficient condition for the Zariski topology and pointwise topology on $\End(\mathbb{A})$ to coincide. 
\begin{de}\label{de-arsfacere}
 If $\mathbb{A}$ is a relational structure, then we say that $\mathbb{A}$ is \textit{arsfacere} if
 there is \(n \in \mathbb{N}\) and an \(n\)-ary relation symbol \(E\) in the signature of \(\mathbb{A}\) such that the following conditions hold:
  \begin{enumerate}[\rm (i)]
    \item \( (x, \ldots, x) \notin E^{\mathbb{A}}\) for all \(x \in A\);

    \item if \(x \in A\) and \(F_1, \ldots, F_n \subseteq A\)  are finite such that \(F_i \cap F_j = \{x\}\) for all $i, j\in \{1, \ldots, n\}$,  \(i \neq j\), then there is an injection \(f \colon \bigcup_{i = 1}^n F_i \to A\) such that \(f{\restriction_{F_i}}\) is an isomorphism between induced substructures of \(\mathbb{A}\) for each \(i \in \{1, \ldots, n\}\) and for every \((x_1, \ldots, x_n) \in (F_1 \setminus \{x\}) \times \ldots \times (F_n \setminus \{x\})\) some permutation of \( ((x_1)f, \ldots, (x_n)f)\) is in \(E^{\mathbb{A}}\).
  \end{enumerate}
 \end{de}

Note that the second assumption in Definition~\ref{de-arsfacere}  holds trivially for structures which have a relation \(E\) such that some permutation of every tuple of distinct elements is in \(E\), for example \((\mathbb{Q}, <)\) or the random tournament. It is routine to verify that several further structures, such as the random graph, are arsfacere.
 
 
 The purpose of the remainder of this section is to establish the proof of Theorem~\ref{arsfacere}.

\arsfacere*
 
We prove Theorem~\ref{arsfacere} in a sequence of lemmas. 

\begin{lem}[cf. Lemma 5.3 in \cite{Elliott2019aa}]\thlabel{lem-zariski-pointwise}
  Let \(X\) be an infinite set and let \(S\) be a subsemigroup of \(X^X\) such
  that for every \(a \in X\) there exist \( \alpha, \beta, \gamma_{1}, \ldots, \gamma_{n} \in S \)
  for some $n\in \N$ such that the following hold:
  \begin{enumerate}[\rm (i)]
    \item  \((x)\alpha = (x)\beta\) if and only if \(x \neq a\);

    \item
          \(a \in \im(\gamma_{i})\) for all \(i\in \{1, \ldots, n\}\);

    \item
          for every \(s \in S\) and every \(x \in X\setminus \{(a)s\}\) there
          is \(i \in \{1, \ldots, n\}\) so that \( \im(\gamma_{i}) \cap (x)s^{-1} =
          \varnothing\).
  \end{enumerate}
  Then the Zariski topology of \(S\) is the pointwise topology.
\end{lem}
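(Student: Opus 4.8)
The plan is to prove the two inclusions of topologies separately. One direction is immediate from the cited results: the pointwise topology on $X^X$ is a Hausdorff semigroup topology, hence so is its restriction to the subsemigroup $S$, and since the Zariski topology is contained in every Hausdorff semigroup topology (Propositions 2.1 and 2.2 of \cite{Elliott2019aa}), the Zariski topology is contained in the pointwise topology. It therefore remains to prove the reverse inclusion, and for this it suffices to show that each subbasic pointwise-open set $\{s \in S : (a)s = y\}$, for $a, y \in X$, is Zariski-open.

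First I would record the following consequence of hypothesis (i), applied at an arbitrary point $p \in X$ with its associated maps $\alpha_p, \beta_p \in S$ satisfying $(x)\alpha_p = (x)\beta_p$ if and only if $x \neq p$. For any $t \in S$ we have $t\alpha_p = t\beta_p$ exactly when $((x)t)\alpha_p = ((x)t)\beta_p$ for all $x$, i.e.\ exactly when $(x)t \neq p$ for all $x$; equivalently,
\[
  t\alpha_p \neq t\beta_p \iff p \in \im(t).
\]
This is the crucial device: it expresses membership of $p$ in the image of $t$ as a Zariski-subbasic inequality between two word maps evaluated at $t$.

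The heart of the argument is then to characterise $\{s : (a)s = y\}$ through the images of the composites $\gamma_i s$, where $\gamma_1, \dots, \gamma_n$ are the maps attached to the point $a$ by the hypothesis. I claim that for $s \in S$,
\[
  (a)s = y \iff y \in \im(\gamma_i s) \text{ for every } i \in \{1, \ldots, n\}.
\]
For the forward implication, hypothesis (ii) gives $a \in \im(\gamma_i)$, say $a = (c_i)\gamma_i$, whence $(c_i)(\gamma_i s) = (a)s = y$ and so $y \in \im(\gamma_i s)$. For the converse, suppose $(a)s \neq y$; then hypothesis (iii) applied with $x = y$ (legitimate since $y \neq (a)s$) yields some $i$ with $\im(\gamma_i) \cap (y)s^{-1} = \varnothing$, which says precisely that no element of $\im(\gamma_i)$ is sent to $y$ by $s$, that is, $y \notin \im(\gamma_i s)$.

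Finally I would combine these ingredients: applying the displayed equivalence of the second step with $p = y$ and $t = \gamma_i s$ turns each condition $y \in \im(\gamma_i s)$ into $\gamma_i s \alpha_y \neq \gamma_i s \beta_y$, so that
\[
  \{s \in S : (a)s = y\} = \bigcap_{i = 1}^{n} \{s \in S : \gamma_i s \alpha_y \neq \gamma_i s \beta_y\}.
\]
Each set on the right is the solution set of an inequality between the single-occurrence word maps $s \mapsto \gamma_i s \alpha_y$ and $s \mapsto \gamma_i s \beta_y$ (of the form $t_1 s t_2$ in the notation defining the Zariski subbasis), hence is Zariski-open; a finite intersection of such sets is Zariski-open, which gives the remaining inclusion. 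I expect the only genuine obstacle to be locating the equivalence in the third step: specifically, realising that one should place the $\gamma_i$ to the \emph{left} of $s$, so as to exploit (ii) and (iii), which govern the images of $\gamma_i s$, while placing $\alpha_y, \beta_y$ to the \emph{right}, so that (i) at the point $y$ detects whether $y$ lies in that image. Once this configuration is found, the remaining verifications are routine.
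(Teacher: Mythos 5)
Your proof is correct. Note that the paper itself gives no proof of this lemma, citing instead Lemma 5.3 of \cite{Elliott2019aa}; your argument --- detecting $p \in \im(t)$ via the inequality $t\alpha_p \neq t\beta_p$, characterising $(a)s = y$ by $y \in \im(\gamma_i s)$ for all $i$ using (ii) and (iii), and thereby writing each pointwise-subbasic set as the finite intersection of the Zariski-subbasic sets $\{s \in S : \gamma_i s \alpha_y \neq \gamma_i s \beta_y\}$ --- is precisely the argument that hypotheses (i)--(iii) are tailored to support, and so fills in the omitted citation in the intended way.
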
      
The following two lemmas provide sufficient conditions for the assumptions of
\thref{lem-zariski-pointwise} 
to be satisfied by the monoids of endomorphisms, and embeddings, of a relational structure. 
This will allow us to prove  Theorem~\ref{arsfacere} and, in Theorem~\ref{cor-zariski-endomorphisms}, to prove that the Zariski topology coincides with the pointwise topology on several examples of endomorphism and embedding monoids of relational structures. 


\begin{lem}\thlabel{lem-zariski-functions1}
  Let \(\mathbb{A}\) be an $\omega$-categorical relational structure  with no algebraicity. Then for every \(a \in A\) there are \( \alpha, \beta \in \Emb(\mathbb{A}) \) such that \((x)\alpha = (x)\beta\) if and only if \(x \neq a\).
\end{lem}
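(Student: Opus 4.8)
The plan is to reduce the statement to a single embedding–extension problem and then solve it by a forth-style back-and-forth argument powered by the two hypotheses on \(\mathbb{A}\). First, the reduction: suppose we can produce an embedding \(g\colon \mathbb{A}{\restriction_{A \setminus \{a\}}} \to \mathbb{A}\) of induced substructures together with two distinct points \(p, q \in A \setminus \im(g)\) such that each of \(p\) and \(q\) realises over \(\im(g)\) the (transported) quantifier-free type that \(a\) realises over \(A \setminus \{a\}\); concretely, for every relation symbol \(R\) and every tuple from \(A \setminus \{a\}\) in which \(a\) occupies one coordinate, replacing that coordinate by \(p\) (respectively \(q\)) and every other coordinate by its \(g\)-image preserves and reflects membership in \(R^{\mathbb{A}}\). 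Then \(\alpha := g \cup \{(a,p)\}\) and \(\beta := g \cup \{(a,q)\}\) are injective, every finite restriction of each is an isomorphism of induced substructures, and hence \(\alpha, \beta \in \Emb(\mathbb{A})\); moreover they agree exactly on \(A \setminus \{a\}\) and differ at \(a\) since \(p \neq q\). Thus it suffices to construct \(g\), \(p\) and \(q\).

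Two facts drive the construction. Since \(\mathbb{A}\) is \(\omega\)-categorical, Ryll--Nardzewski gives finitely many orbits on \(k\)-tuples for each \(k\), and the countable model of an \(\omega\)-categorical theory is \(\aleph_0\)-saturated; in particular any quantifier-free type over a finite subset that is consistent with the theory of \(\mathbb{A}\) is realised in \(\mathbb{A}\). Since \(\mathbb{A}\) has no algebraicity, the type of a point over a finite set \(F\) not containing it is non-algebraic, hence has infinitely many realisations; this is exactly what will keep two distinct reserved points alive throughout the construction.

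Now enumerate \(A \setminus \{a\} = \{b_1, b_2, \dots\}\) and build an increasing chain of finite partial isomorphisms \(h_0 \subseteq h_1 \subseteq \cdots\) with \(\dom(h_n) = \{b_1, \dots, b_n\}\), together with fixed distinct points \(p, q\), maintaining the invariant that the map sending \(\{b_1, \dots, b_n, a', a''\}\) to \(\im(h_n) \cup \{p, q\}\) by \(b_i \mapsto (b_i)h_n\), \(a' \mapsto p\), \(a'' \mapsto q\) is an isomorphism of induced substructures, where \(a', a''\) are two formal copies of \(a\) each carrying the same relations to the \(b_i\) as \(a\) does. Granting the invariant, at stage \(n+1\) the element \(b_{n+1}\) determines a quantifier-free type over \(\im(h_n) \cup \{p, q\}\) (relate to \(\im(h_n)\) as \(b_{n+1}\) relates to \(\{b_1, \dots, b_n\}\) via \(h_n\), and relate to each of \(p, q\) as \(b_{n+1}\) relates to \(a\)); this type is consistent, hence realised in \(\mathbb{A}\) by \(\aleph_0\)-saturation, and we take \((b_{n+1})h_{n+1}\) to be such a realisation, choosing it distinct from the finitely many already-used points by non-algebraicity. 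Setting \(g = \bigcup_n h_n\) yields the required embedding and \(p, q\) the required reserved points.

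The crux, and the only place needing more than bookkeeping, is to launch the invariant and to guarantee consistency of the extension type at every step, i.e. to exhibit a genuine ``independent copy'' of \(a\). Equivalently, one must verify that the structure \(\mathbb{A}^+\) obtained from \(\mathbb{A}\) by duplicating \(a\) into two points \(a', a''\) of the same type, with a suitably chosen relationship between \(a'\) and \(a''\), satisfies \(\Age(\mathbb{A}^+) \subseteq \Age(\mathbb{A})\), so that all the finite configurations arising above really occur in \(\mathbb{A}\). This is where no algebraicity is indispensable: it supplies, for each finite \(F\), infinitely many realisations of the type of \(a\) over \(F\), and combining this with the finiteness of the type-space from \(\omega\)-categoricity (a pigeonhole/compactness argument over the tree of finite approximations) produces a coherent choice of the \(a'\)--\(a''\) relationship making every finite subconfiguration embeddable. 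Once this independent copy is secured, the back-and-forth runs without further obstruction.
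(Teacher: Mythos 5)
There is a genuine gap, and it lies exactly where you locate the ``crux'': your forth step silently assumes that $\mathbb{A}$ is homogeneous, which the lemma does not. Throughout you only maintain that the partial maps are isomorphisms of \emph{induced substructures}, i.e.\ you track quantifier-free types, and at stage $n+1$ you assert that the quantifier-free type of $b_{n+1}$ over $\im(h_n)\cup\{p,q\}$ ``is consistent, hence realised in $\mathbb{A}$ by $\aleph_0$-saturation''. But saturation realises types over parameters that are finitely satisfiable \emph{over those parameters}, i.e.\ consistent with the complete first-order type of the tuple $(\im(h_n),p,q)$, not merely with $\mathrm{Th}(\mathbb{A})$. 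Knowing that the abstract finite configuration on $\{b_1,\dots,b_{n+1},a',a''\}$ embeds somewhere into $\mathbb{A}$ (which is all your age-containment condition $\Age(\mathbb{A}^+)\subseteq\Age(\mathbb{A})$ provides) does not imply it embeds extending the particular partial map built so far: the tuple $(\,(b_1)h_n,\dots,(b_n)h_n,p,q)$ is only quantifier-free-equivalent to the configuration you started from, and in an $\omega$-categorical structure that is not homogeneous, quantifier-free-equivalent tuples need not lie in the same $\Aut(\mathbb{A})$-orbit, so the required extension point may simply fail to exist over your chosen images. (If $\mathbb{A}$ were assumed homogeneous, as in Lemma~\ref{lem-zariski-functions2}, your argument would go through, since homogeneity is precisely the statement that quantifier-free types determine orbits.)

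The paper's proof avoids this by never leaving the world of orbits: no algebraicity supplies automorphisms $\beta_n\in\Aut(\mathbb{A})_{F_n\setminus\{a\}}$ moving $a$, so every finite approximation (the pair consisting of the identity and $\beta_n$) is witnessed by actual automorphisms rather than by quantifier-free data; $\omega$-categoricity (finitely many orbits on tuples) is then used to compose with suitable automorphisms $u_k$ and extract convergent subsequences, and the limits $\alpha,\beta\in\overline{\Aut(\mathbb{A})}\subseteq\Emb(\mathbb{A})$ agree exactly off $a$. Your plan can be repaired along the same lines: replace ``isomorphism of induced substructures'' by ``partial elementary map'' (equivalently, require that your tuples lie in prescribed orbits), run your K\"onig's-lemma coherence argument on complete types of the pair $(a',a'')$ over the sets $F_n$ (where $\omega$-categoricity gives finiteness at each level), and then the forth step becomes legitimate because countable $\omega$-categorical structures are $\aleph_0$-saturated and realise any type over a finite set that is consistent with the complete type of that set. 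As written, however, the consistency claim at the heart of your induction is unjustified, and it is the main point needing proof.
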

\begin{proof}
  Let \(a \in A\) be fixed and let \( F_0 = \{a\} \subseteq F_1 \subseteq \cdots\) be finite sets such that \(\bigcup_{n \in \N} F_n = A\).
  Suppose that \(n \in \N\) and let \(\alpha_n\) be the identity map. Since \(\mathbb{A}\) has no algebraicity there exists 
  \(\beta_n \in \Aut(\mathbb{A})_{F_n \setminus \{a\}}\) such that \((a)\beta_n \neq a\). Then for
  all \(n \in \N\)
  \begin{equation}\label{equation-well-named}
      (x)\alpha_n = (x)\beta_n \ \text{for all} \ x \in F_n \setminus \{a\} \ \text{and} \ (a)\alpha_n \neq (a)\beta_n.
  \end{equation}
  
  Since \(\mathbb{A}\) is \(\omega\)-categorical, there is \(i_0\) such that for infinitely many \(k \geq i_0\) there is \(u_k \in \Aut(\mathbb{A})\) such that \( \alpha_k u_k\) and \(\beta_k u_k\) agree with \(\alpha_{i_0}\) and \(\beta_{i_0}\) on \(F_0\). Next, replace \(\alpha_0\) and \(\beta_0\) by \(\alpha_{i_0}\) and \(\beta_{i_0}\), replace \(\alpha_k\) and \(\beta_k\) by \(\alpha_k u_k\) and \(\beta_k u_k\) for every \(k\) as above, and remove the remaining \(\alpha_m\) and \(\beta_m\).
  Note that~\eqref{equation-well-named} still holds for the new sequences.
  Subsequently repeat this process for \(n \geq 1\) each time choosing \(i_n \geq n\). In the end, we obtain convergent sequences \(\alpha_0, \alpha_1, \ldots\) and \(\beta_0, \beta_1, \ldots\) of automorphisms such that \eqref{equation-well-named} still holds. If \(\alpha, \beta \in \overline{\Aut(\mathbb{A})}\) are limits of these sequences, it follows from~\eqref{equation-well-named} that \( (x)\alpha = (x)\beta\) if and only if \(x \neq a\). Finally, \(\overline{\Aut(\mathbb{A})} \subseteq \Emb(\mathbb{A})\), completing the proof.
\end{proof}

We note that it is also possible to prove Lemma~\ref{lem-zariski-functions1} using~\cite[Proposition 6]{Bodirsky2014ab}, which implies that a certain quotient of $\Emb(\mathbb{A})\times \Emb(\mathbb{A})$ is compact.

We require the following simple observation about automorphism groups of structures with no algebraicity.
The proof is omitted. 

  \begin{lem}\label{claim-pointwise-stab}
    Let $\mathbb{A}$ be a relational structure with no algebraicity,
    let \(a \in A\), and let \(F, H \subseteq A\) be finite subsets so that \(a \in F \cap H\). Then there is \(\alpha \in \Aut(\mathbb{A})\) such that \( (F)\alpha \cap H = \{a\}\).
  \end{lem}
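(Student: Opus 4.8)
The plan is to produce a single automorphism $\alpha$ that fixes $a$ and carries every other element of $F$ out of the finite set $H$. Since such an $\alpha$ fixes $a \in H$ and is injective, this immediately yields the lemma: we have $(F)\alpha = \{a\} \cup \set{(x)\alpha}{x \in F \setminus \{a\}}$, and if each $(x)\alpha \notin H$ for $x \in F \setminus \{a\}$, then the only element of $(F)\alpha$ lying in $H$ is $a$, i.e. $(F)\alpha \cap H = \{a\}$. So the whole task reduces to finding $\alpha \in \Aut(\mathbb{A})$ with $(a)\alpha = a$ and $(x)\alpha \notin H$ for every $x \in F \setminus \{a\}$.

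To do this I would enumerate $F \setminus \{a\} = \{x_1, \ldots, x_k\}$ and place the points off $H$ one at a time, recording each placement so that later steps never disturb it. Formally I argue by induction on $k$, the base case $k = 0$ being $\alpha = \id$. For the inductive step, suppose $\alpha' \in \Aut(\mathbb{A})$ fixes $a$ and satisfies $(x_i)\alpha' \notin H$ for $i = 1, \ldots, k-1$, and write $y_i = (x_i)\alpha'$ for $i = 1, \ldots, k$. As $\alpha'$ is injective and fixes $a$, the elements $a, y_1, \ldots, y_k$ are pairwise distinct; in particular $y_k \notin \{a, y_1, \ldots, y_{k-1}\}$. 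The set $\{a, y_1, \ldots, y_{k-1}\}$ is finite, so by no algebraicity the orbit of $y_k$ under the pointwise stabiliser $\Aut(\mathbb{A})_{\{a, y_1, \ldots, y_{k-1}\}}$ is infinite, hence not contained in the finite set $H$. Thus there is $\gamma \in \Aut(\mathbb{A})_{\{a, y_1, \ldots, y_{k-1}\}}$ with $(y_k)\gamma \notin H$. Setting $\alpha = \alpha'\gamma$ gives an automorphism fixing $a$ with $(x_i)\alpha = (y_i)\gamma = y_i \notin H$ for $i < k$ (since $\gamma$ fixes each $y_i$) and $(x_k)\alpha = (y_k)\gamma \notin H$, completing the induction.

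I do not expect a serious obstacle: the entire content is the no-algebraicity hypothesis, which is precisely what keeps the relevant orbit infinite after $a$ and the finitely many previously placed images have been pinned down. The one thing to keep straight is the bookkeeping — each successive automorphism must fix all earlier images, which is why the stabiliser is taken over the growing finite set $\{a, y_1, \ldots, y_{k-1}\}$ rather than merely over $\{a\}$, so that composing with such a $\gamma$ leaves the already-correct values untouched. I would also remark that $\omega$-categoricity plays no role here; the argument uses only that pointwise stabilisers of finite sets have infinite orbits on the complement.
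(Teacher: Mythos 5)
Your proof is correct. The paper does not actually supply an argument for this lemma --- it is stated as a ``simple observation'' with the proof omitted --- and your inductive argument (fix $a$, push the remaining points of $F$ out of the finite set $H$ one at a time, using the pointwise stabiliser of $a$ together with the already-placed images so that earlier placements are not disturbed) is precisely the standard argument the authors presumably had in mind; it uses only the no-algebraicity hypothesis, exactly as the lemma's statement requires.
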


  
  The final lemma in our sequence is the following.
  
\begin{lem}\thlabel{lem-zariski-functions2}
  Let \(\mathbb{A}\) be a countable homogeneous  relational structure with no algebraicity which is arsfacere as witnessed by an $n$-ary relation symbol $E$.  
  Then for every \(a \in A\) there are \(\gamma_{1}, \ldots, \gamma_{n} \in \Emb(\mathbb{A})\) such that \(\im(\gamma_{i}) \cap \im(\gamma_{j}) = \{a\}\) for all $i, j\in \{1, \ldots, n\}$, \(i \neq j\),  and such that for every \(f \in \End(\mathbb{A})\) and every \(b \in A\setminus \{(a)f\}\) there is \(i \in \{1, \ldots, n\}\) so that \( \im(\gamma_{i}) \cap (b)f^{-1} = \varnothing\).
\end{lem}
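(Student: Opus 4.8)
The plan is to reduce the statement to a purely combinatorial condition on the images of the $\gamma_i$, and then to build the $\gamma_i$ by an arsfacere-driven finite approximation followed by a compactness argument in the style of Lemma~\thref{lem-zariski-functions1}.

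First I would record the reduction. Suppose we have produced $\gamma_1, \dots, \gamma_n \in \Emb(\mathbb{A})$ with $(a)\gamma_i = a$ for all $i$, with $\im(\gamma_i) \cap \im(\gamma_j) = \{a\}$ for $i \neq j$, and enjoying the following \emph{$E$-property}: for every choice $y_i \in \im(\gamma_i)\setminus\{a\}$ ($1 \le i \le n$) some permutation of $(y_1, \dots, y_n)$ lies in $E^{\mathbb{A}}$. I claim these already satisfy the conclusion. Indeed, fix $f \in \End(\mathbb{A})$ and $b \neq (a)f$, and suppose toward a contradiction that $\im(\gamma_i)\cap (b)f^{-1} \neq \varnothing$ for every $i$, witnessed by $y_i \in \im(\gamma_i)$ with $(y_i)f = b$. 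Since $(a)f \neq b$ each $y_i \neq a$, so the $E$-property supplies a permutation $\pi$ with $(y_{\pi(1)}, \dots, y_{\pi(n)}) \in E^{\mathbb{A}}$; as $f$ is a homomorphism it preserves $E$, whence $((y_{\pi(1)})f, \dots, (y_{\pi(n)})f) = (b, \dots, b) \in E^{\mathbb{A}}$, contradicting condition~(i) of Definition~\ref{de-arsfacere}. Thus some $i$ has $\im(\gamma_i)\cap (b)f^{-1} = \varnothing$, as required, and it remains to construct the $\gamma_i$.

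For the construction I fix $a$ and an exhaustion $\{a\} = F_0 \subseteq F_1 \subseteq \cdots$ of $A$ by finite sets. At stage $k$ I first use no algebraicity, refining Lemma~\ref{claim-pointwise-stab} so that the separating automorphism fixes $a$, to produce $\tau_1, \dots, \tau_n \in \Aut(\mathbb{A})_{\{a\}}$ whose images $G_i := (F_k)\tau_i$ pairwise meet exactly in $\{a\}$. Applying the arsfacere property (Definition~\ref{de-arsfacere}(ii)) to $G_1, \dots, G_n$ with $x = a$ yields an injection $h$ with each $h\!\restriction_{G_i}$ an isomorphism of induced substructures and with the cross-tuple $E$-property on the images $(G_i)h$; post-composing $h$ with an automorphism $\rho_k$ carrying $(a)h$ back to $a$ (legitimate by homogeneity, since $(a)h$ has the same type as $a$), I set $\gamma_i^{(k)} := (\tau_i h \rho_k)\!\restriction_{F_k}$. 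Each $\gamma_i^{(k)}$ is then a finite partial isomorphism fixing $a$; the images $C_i^{(k)} = (F_k)\gamma_i^{(k)}$ pairwise meet in $\{a\}$ because the $G_i$ do and $h, \rho_k$ are injective; and since $\tau_i, \rho_k$ preserve $E$, the arsfacere output transfers to the $E$-property for all cross-tuples of the $C_i^{(k)}\setminus\{a\}$.

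Finally I would pass to the limit exactly as in Lemma~\thref{lem-zariski-functions1}. Using that $\mathbb{A}$ (equivalently $(\mathbb{A}, a)$) is $\omega$-categorical, I extract a diagonal subsequence and align all $n$ maps simultaneously by a common automorphism in $\Aut(\mathbb{A})_{\{a\}}$ at each step, so that the modified maps converge pointwise; composing on the right with an element of $\Aut(\mathbb{A})_{\{a\}}$ preserves being a partial isomorphism, fixing $a$, the pairwise-$\{a\}$ intersection of images, and the $E$-property, so all of these persist along the sequence. The pointwise limits $\gamma_i$ are total and agree on each finite set with some approximation, hence each $\gamma_i$ is a self-embedding (using that $\Emb(\mathbb{A})$ is closed), and the three required properties survive because any $y_i \in \im(\gamma_i)\setminus\{a\}$, and any common image point, already appears at a finite stage where the corresponding property holds. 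I expect the coordination step to be the main obstacle: maintaining the \emph{global} $E$-property across all $n$ copies while forcing the images to meet only at $a$. This is precisely why the copies must be separated by no algebraicity before arsfacere is invoked, and why the alignment in the limit must be carried out by a single automorphism fixing $a$ for all $n$ maps at once.
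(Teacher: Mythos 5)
Your proof is correct and follows essentially the same route as the paper's: the same final contradiction (homomorphisms preserve $E$, so a common preimage point in every $\im(\gamma_i)$ would force $(b,\ldots,b)\in E^{\mathbb{A}}$, violating Definition~\ref{de-arsfacere}(i)), the same stage-wise construction (no algebraicity to separate the $n$ copies of $F_k$ at $a$, arsfacere for the cross-tuples, homogeneity to renormalize at $a$), and the same $\omega$-categoricity subsequence/alignment argument to pass to embedding limits. The only cosmetic difference is that you keep finite partial isomorphisms and post-compose with $\rho_k$, whereas the paper normalizes the arsfacere injection to be the identity on the first copy and extends each stage map to a full automorphism before taking limits.
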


\begin{proof}
  We proceed in a similar way to \thref{lem-zariski-functions1}. Let \(a \in A\) be fixed.
   First, we will show that if \(F \subseteq A\) is finite and \(a \in F\), then there are \(\alpha_1, \ldots, \alpha_n \in \Aut(\mathbb{A})_a\) such that \((F)\alpha_i \cap (F)\alpha_j = \{a\}\) for all $i, j\in \{1, \ldots, n\}$, \(i \neq j\), and
  for every \((x_1, \ldots, x_n) \in ((F)\alpha_1 \setminus \{a\}) \times \cdots \times  ((F)\alpha_n \setminus \{a\})\) there is some permutation of $(x_1, \ldots, x_n)$ in $E ^ {\mathbb{A}}$.
  We denote this property of \(\alpha_1, \ldots, \alpha_n\) by \(\mathcal{P}(F)\).
 
  Fix a finite \(F \subseteq A\) and let \(\beta_1 \in \Aut(\mathbb{A})\) be the identity map.
  By repeated application of Lemma~\ref{claim-pointwise-stab}, we obtain \(\beta_2, \ldots, \beta_n \in \Aut(\mathbb{A})_a\) such that  \((F)\beta_i \cap (F)\beta_j = \{a\}\) for all \(i, j \in \{1, \ldots, n\}\), \(i \neq j\). Since $\mathbb{A}$ is arsfacere, there is an injection \(f \colon \bigcup_{i = 1}^n (F)\beta_i \to A\) such that \(f{\restriction_{(F)\beta_i}}\) is an isomorphism between induced substructures of \(\mathbb{A}\) for each \(i \in \{1, \ldots, n\}\), and such that for every \((x_1, \ldots, x_n) \in ( (F)\beta_1 \setminus \{a\}) \times \cdots \times ((F)\beta_n \setminus \{a\})\)
  some permutation of \( ((x_1)f, \ldots, (x_n)f)\) belongs to \(E^{\mathbb{A}}\).
  By homogeneity, 
  we may assume that $f{\restriction_{(F)\beta_1}}$ is the identity, and, in particular, $f$ fixes $a$.
  Then \( \beta_1 f{\restriction_{F}}, \ldots, \beta_n f{\restriction_{F}}\) are isomorphisms between induced substructures of \(\mathbb{A}\), and since \(\mathbb{A}\) is homogeneous, they can be extended to automorphisms \(\alpha_1, \ldots, \alpha_n\) respectively. It is routine to verify that \(\alpha_1, \ldots, \alpha_n\) satisfy \(\mathcal{P}(F)\).
  
  Let \( F_0 = \{a\} \subseteq F_1 \subseteq \cdots\) be finite sets such that \(\bigcup_{n \in \N} F_n = A\). 
  By the first paragraph of the proof, for every \(i \in \N\) there are \(\gamma_{i, 1}, \ldots, \gamma_{i, n}\in \Aut(\mathbb{A})\) satisfying \(\mathcal{P}(F_i)\). 
  By a similar argument as in the proof of Lemma~\ref{lem-zariski-functions1}, by replacing each of the sequences $(\gamma_{i,j})_{i\in\mathbb N}$, where 
    $j\in \{1, \ldots, n\}$, by an appropriate subsequence, we may assume that these sequences converge. 
  We denote the limits of these subsequences by 
 \(\gamma_1, \ldots, \gamma_n \in \overline{\Aut(\mathbb{A})} \subseteq \Emb(\mathbb{A})\).
  It follows that \(\im(\gamma_i) \cap \im(\gamma_j) = \{a\}\) for all $i, j\in \{1, \ldots, n\}$,  \(i \neq j\), and for every \((x_1, \ldots, x_n) \in (\im(\gamma_1) \setminus \{a\}) \times \cdots \times  (\im(\gamma_n) \setminus \{a\})\), some permutation of \((x_1, \ldots, x_n)\) belongs to $E^{\mathbb{A}}$.

  Let \(f \in \End(\mathbb{A})\) and \(b \in A\setminus \{(a)f\}\). Suppose that for every \(i \in \{1, \dots, n\}\) there is \(x_i \in \im(\gamma_{i}) \cap (b)f^{-1}\).
  Observe that \( (x_i)f = b \neq (a)f\), and so \(x_i \in \im(\gamma_i) \setminus \{a\}\) for all \(i \in \{1, \dots, n\}\). It follows from above that some permutation of 
  \( (x_1, \ldots, x_n)\) belongs to \(E^{\mathbb{A}}\) and so \( (b, \dots, b)  \in E^{\mathbb{A}}\), contradicting the assumption of the lemma. Therefore there is \(i \in \{1, \dots, n\}\) such that \(\im(\gamma_{i}) \cap (b)f^{-1} = \varnothing\), as required.
\end{proof}

\begin{proof}[Proof of Theorem~\ref{arsfacere}]
Suppose that $S$ is any submonoid of $\End(\mathbb{A})$ containing $\Emb(\mathbb{A})$. 
Since $\mathbb{A}$ is an arsfacere $\omega$-homogeneous relational structure with no algebraicity, Lemma~\ref{lem-zariski-functions1} implies that Lemma~\ref{lem-zariski-pointwise}(i) holds, and Lemma~\ref{lem-zariski-functions2}
implies that Lemma~\ref{lem-zariski-pointwise}(ii) and (iii) hold too. Hence, by Lemma~\ref{lem-zariski-pointwise}, 
the Zariski topology and the pointwise topology coincide on $S$.
\end{proof}

\begin{theorem}\label{cor-zariski-endomorphisms}
  The Zariski topology of \(\End(\mathbb X)\) is the pointwise topology whenever \(\mathbb{X}\) is one of the following structures: 
    \begin{enumerate}[\rm (i)]
        \item the random graph;
        \item the random directed graph;
        \item \((\mathbb{Q}, <)\);
        \item the random tournament;
        \item the random strict partial order;
        \item \(n \mathbb{K}_\omega\) for every $n\in (\mathbb{N}\setminus\{0\})\cup \{\omega\}$;
        \item any structure $\mathbb Y$ with a first-order definition in any of the structures $\mathbb X$ in items (i)-(iv) above  which (like $\mathbb X$)   has no endomorphism sending a pair in the  relation of $\mathbb X$ onto a loop; this is in particular the case for all expansions of the structures $\mathbb X$ by first-order definable relations.
  \end{enumerate}
\end{theorem}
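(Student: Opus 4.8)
The plan is to verify, for each listed structure, the hypotheses of Theorem~\ref{arsfacere}: that $\mathbb{X}$ is countable, $\omega$-categorical, homogeneous, arsfacere, and has no algebraicity. Once these are checked, Theorem~\ref{arsfacere} applied with $S = \End(\mathbb{X})$ immediately gives that the Zariski topology and the pointwise topology coincide on $\End(\mathbb{X})$. So the entire proof reduces to confirming the five model-theoretic hypotheses case by case, and the real content is the arsfacere condition and the no-algebraicity condition, since $\omega$-categoricity and homogeneity for these Fraïssé limits are standard (each arises as the Fraïssé limit of a class satisfying the relevant amalgamation properties, and $\omega$-categoricity follows from there being finitely many orbits on $k$-subsets via Ryll-Nardzewski).

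First I would dispatch the arsfacere condition. For the random tournament and $(\mathbb{Q},<)$, the remark after Definition~\ref{de-arsfacere} applies directly with $E$ the binary (strict) relation, since for these structures some permutation of \emph{every} pair of distinct elements lies in $E^{\mathbb{X}}$, so condition (ii) holds trivially and (i) is the anti-reflexivity, which holds by construction. For the random graph and the random directed graph one takes $E$ to be the edge relation and uses the extension/amalgamation property of the Fraïssé limit: given finite sets $F_1,\dots,F_n$ meeting pairwise in $\{x\}$, one realises a configuration in which the required cross-tuples become edges by extending the isomorphism type appropriately; the excerpt already asserts that verifying this is routine for the random graph, and the directed case is analogous. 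For the random strict partial order one again uses that a linear extension puts distinct elements into the order relation after a suitable embedding. For $n\mathbb{K}_\omega$ one uses $E$ internal to a single infinite clique. No-algebraicity for all these homogeneous structures follows from the usual point: the pointwise stabiliser of a finite set still acts with infinite orbits on the complement, because the amalgamation (strong amalgamation, in the relevant cases) lets one move any point to infinitely many distinct images fixing the finite set.

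For part (vii), the structure $\mathbb{Y}$ is first-order definable in some $\mathbb{X}$ from (i)–(iv), so $\mathbb{Y}$ and $\mathbb{X}$ have the same automorphism group and hence $\mathbb{Y}$ inherits $\omega$-categoricity and no-algebraicity from $\mathbb{X}$ by Ryll-Nardzewski and the orbit characterisation. The subtlety is that $\mathbb{Y}$ need not be homogeneous nor arsfacere in its own signature, so one cannot re-run Theorem~\ref{arsfacere} on $\mathbb{Y}$ directly. Instead I would argue at the level of monoids: the hypothesis that $\mathbb{Y}$ has no endomorphism collapsing an $\mathbb{X}$-relational pair onto a loop forces $\End(\mathbb{Y})\subseteq\End(\mathbb{X})$, and combined with $\Emb(\mathbb{X})\subseteq\End(\mathbb{Y})$ one obtains $\Emb(\mathbb{X})\leq\End(\mathbb{Y})\leq\End(\mathbb{X})$; then Theorem~\ref{arsfacere}, which is stated for \emph{any} monoid $S$ with $\Emb(\mathbb{A})\leq S\leq\End(\mathbb{A})$, applies with $\mathbb{A}=\mathbb{X}$ and $S=\End(\mathbb{Y})$. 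The main obstacle I anticipate is precisely establishing these monoid inclusions in part (vii): one must check that the no-loop-collapse hypothesis genuinely pins $\End(\mathbb{Y})$ between the embedding monoid and endomorphism monoid of the underlying structure $\mathbb{X}$, and that every self-embedding of $\mathbb{X}$ is an endomorphism of the definable expansion $\mathbb{Y}$. This last point is where the ``no endomorphism sending a relational pair onto a loop'' condition does its work, ensuring that the endomorphisms of $\mathbb{Y}$ respect enough of $\mathbb{X}$'s structure to land inside $\End(\mathbb{X})$.
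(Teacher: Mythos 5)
Your reduction of cases (i)--(iv) to Theorem~\ref{arsfacere} matches the paper, but the proposal breaks down on cases (v) and (vi): neither the random strict partial order nor \(n\mathbb{K}_\omega\) (for \(n\geq 2\)) is arsfacere, and the paper says so explicitly. For the strict partial order, take \(F_1=\{x,p,u,q\}\) with \(p<x<q\) and \(u\) incomparable to \(p,x,q\), and \(F_2=\{x,v\}\) with \(v\) incomparable to \(x\). Any injection \(f\) whose restrictions to \(F_1,F_2\) are isomorphisms must satisfy \((p)f<(v)f\) and \((v)f<(q)f\) (otherwise transitivity makes \((v)f\) comparable to \((x)f\)); but then \((u)f<(v)f\) would force \((u)f<(q)f\) and \((v)f<(u)f\) would force \((p)f<(u)f\), either way contradicting the incomparabilities required inside \(F_1\). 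So the cross pair \(((u)f,(v)f)\) can never be made comparable, and no choice of witnesses exists; your suggestion that ``a linear extension puts distinct elements into the order relation'' produces maps that are \emph{not} isomorphisms on the \(F_i\), which is exactly why they cannot serve as arsfacere witnesses. Similarly, for \(n\mathbb{K}_\omega\) with \(n\geq2\), take \(a_1\in F_1, b_1\in F_2\) adjacent to \(x\) and \(a_2\in F_1, b_2\in F_2\) in other cliques: then \((a_1)f\) lies in the clique of \((x)f\) while \((b_2)f\) does not, so they cannot be adjacent; also ``\(E\) internal to a single clique'' is not a relation symbol of the signature, so it is not available in Definition~\ref{de-arsfacere}. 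The paper handles (v) and (vi) by a genuinely different mechanism: it directly constructs \emph{proper endomorphisms} (surjections onto two subsets of a single clique meeting in \(\{a\}\), respectively maps onto two dense chains obtained from a linear extension) witnessing Lemma~\ref{lem-zariski-pointwise}(ii) and (iii). This collapsing is unavoidable: one can show no self-embeddings can satisfy the cross-relatedness condition for these two structures.

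Case (vii) also has a gap. The hypothesis that no endomorphism of \(\mathbb{Y}\) sends an \(\mathbb{X}\)-related pair onto a loop does \emph{not} imply \(\End(\mathbb{Y})\subseteq\End(\mathbb{X})\): take \(\mathbb{X}\) the random graph and \(\mathbb{Y}=(X;\neq)\), which is first-order definable in \(\mathbb{X}\) and whose endomorphisms are exactly the injective maps, so the hypothesis holds vacuously, yet \(\End(\mathbb{Y})=\Inj(X)\not\subseteq\End(\mathbb{X})\) since injections need not preserve edges. Hence the sandwich \(\Emb(\mathbb{X})\leq\End(\mathbb{Y})\leq\End(\mathbb{X})\) fails and Theorem~\ref{arsfacere} cannot be invoked with \(S=\End(\mathbb{Y})\). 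The paper instead applies Lemma~\ref{lem-zariski-pointwise} directly to \(\End(\mathbb{Y})\): the witnesses \(\gamma_1,\ldots,\gamma_n\) of Lemma~\ref{lem-zariski-functions2} are limits of automorphisms of \(\mathbb{X}\), hence lie in the closed monoid \(\End(\mathbb{Y})\supseteq\Aut(\mathbb{X})\), and condition (iii) for an arbitrary \(s\in\End(\mathbb{Y})\) holds because its failure would exhibit a tuple related in \(E^{\mathbb{X}}\) that \(s\) maps onto a constant tuple, contradicting the no-loop-collapse hypothesis. Your instinct that one should argue ``at the level of monoids'' is right, but the correct level is the lemma, not the theorem.
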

\begin{proof}
  We proceed by showing that Lemma~\ref{lem-zariski-pointwise} applies in each of the cases of the theorem. In particular, we will observe  that the structures in the statement are $\omega$-categorical with no algebraicity, and so there exist \(\alpha, \beta\in\Emb(\mathbb{X})\) satisfying 
  \thref{lem-zariski-pointwise}(i) by \thref{lem-zariski-functions1}.
  The remaining functions \(\gamma_{1}, \ldots, \gamma_{m}\) in the hypothesis of \thref{lem-zariski-pointwise} are obtained using \thref{lem-zariski-functions2} in cases (i)-(iv) (i.e., the structures in cases (i)-(iv) satisfy the hypothesis of Theorem~\ref{arsfacere}) and explicitly constructed in cases (v) and (vi). Case (vii) \ljy{will} 
  follow from the argument for the cases (i)-(iv).
  
   It is well-known  that the ages of \((\mathbb{Q}, <)\), the random strict partial order, the random graph, the random directed graph, the random tournament, and \(n \mathbb{K}_\omega\) have the strong amalgamation property. Moreover, each of these structures is a Fra\"{i}ss\'{e} limit, and so they are all homogeneous. Since the strong amalgamation property is equivalent to no algebraicity for homogeneous structures~\cite[(2.15)]{Cameron1990}, it follows that all these structures  have no algebraicity. 
  Furthermore, if \(\mathbb{Y}\) is first-order definable in \(\mathbb{X}\), then \(\Aut(\mathbb{X}) \leq\Aut(\mathbb{Y})\), and so if \(\mathbb{X}\) has no algebraicity, so does \(\mathbb{Y}\). Hence the structures in case~(vii) also have no algebraicity. 
  
  As discussed after Definition~\ref{de-arsfacere},  arsfacere holds trivially in the cases of \((\mathbb{Q}, <)\) and the random tournament, since in each of these structures some permutation of every tuple is related in the unique relation of that structure. Clearly, the random graph \(\mathbb{G} = (G, E^{\mathbb{G}})\) is arsfacere: part (i) of Definition~\ref{de-arsfacere} holds for all graph without loops, and for (ii) given $F_1,F_2\subseteq G$ with $F_1\cap F_2=\{x\}$ the required  injection $f$ exists since the graph obtained from the induced subgraph on $F_1\cup F_2$ by adding all edges between pairs in $(F_1\setminus\{x\})\times (F_2\setminus\{x\})$ is a subgraph of \(\mathbb{G}\). The argument for the random digraph is similar. 
  
  We consider case~(vii). Since $\mathbb Y$ is first-order definable in $\mathbb X$, every automorphism of $\mathbb X$ is an automorphism of $\mathbb Y$. Consequently the endomorphism monoid of $\mathbb Y$ contains the  functions $\gamma_1,\gamma_2$ constructed using arsfacere in Lemma~\ref{lem-zariski-functions2}, since these endomorphisms are obtained as limits of automorphisms. By the condition of the endomorphisms of $\mathbb Y$, not sending any pair in the  relation in $\mathbb X$ onto a loop, these functions also work as witnesses of Lemma~\ref{lem-zariski-pointwise}~(ii) and~(iii) for $\mathbb Y$.
  
  The structure \(n \mathbb{K}_\omega\) is not arsfacere, nor is the random strict partial order; in these cases, we verify directly that there exist endomorphisms satisfying \thref{lem-zariski-pointwise}(ii) and~(iii). Note that for arsfacere structures Lemma~\ref{lem-zariski-functions2} provides self-embeddings witnessing \thref{lem-zariski-pointwise}(ii) and~(iii), but only endomorphisms are needed. The endomorphisms we will  construct work for the same reason as the self-embeddings provided by  Lemma~\ref{lem-zariski-functions2} in the case of arsfacere structures. 
  Denote the domain of \(n \mathbb{K}_\omega\) by \(D\).
  Let \(a \in D\) be fixed, and let \(C_1, C_2 \subseteq D\) be infinite such that \(C_1 \cap C_2 = \{a\}\) and \(C_1 \cup C_2\) is contained in a copy of \(\mathbb{K}_\omega\). For \(i \in \{1, 2\}\), let \(\gamma_{i} \colon D \to C_i\) be any surjection. Then \(\gamma_{1}, \gamma_{2} \in \End(n \mathbb{K}_\omega)\) and \(a \in \im(\gamma_{i})\) for \(i \in \{1, 2\}\).
  Suppose that \(s \in \End(n \mathbb{K}_\omega)\) and let \(x \in D \setminus \{(a)s\}\). Then \(a \notin (x)s^{-1}\), and so if there are \(x_i \in \im(\gamma_{i}) \cap (x)s^{-1}\) for both \(i \in \{1,2\}\), then $x_1 \neq a$ and $x_2 \neq a$, and so \(x_1\) is adjacent to \(x_2\). Hence \( (x, x) = ( (x_1)s, (x_2)s)\) is an edge in \(n \mathbb{K}_\omega\), which is a contradiction. Therefore, 
  \(\im(\gamma_{i}) \cap (x)s^{-1} = \varnothing\) for some \(i \in \{1, 2\}\).
  
  We prove the same for the random strict partial order, which we denote by \((P, <)\). The argument is similar to the argument for $n\mathbb{K}_{\omega}$. Let \(a \in P\) be fixed. Pick any set $A\subseteq P$ with $a\in A$ which induces a dense linear order in \((P, <)\). Next pick  $A_1, A_2 \subseteq A$ such that $A_1\cap A_2=\{a\}$ and such that both sets induce a dense linear order without endpoints in \((P, <)\). Since the order relation of \((P, <)\) can be extended to a linear order, it follows that for each \(i \in \{1, 2\}\), we can find an (injective) endomorphism $\gamma_i$ of \((P, <)\) with \(\im(\gamma_{i}) = A_i\).  It can be shown in a similar way as above that \(\gamma_{1}\) and \(\gamma_{2}\) are the required endomorphisms satisfying \thref{lem-zariski-pointwise}(ii) and (iii).
\end{proof}

\ignore{
 Define the relations \(B\), \(C\), and \(S\) on \( (\mathbb{Q}, <)\), known as \emph{betweeness relation}, \emph{circular order}, and \emph{separation relation}, as follows:
 \begin{itemize}
  \item \( (x, y, z) \in B\) if and only if \( (x < y < z) \lor (z < y < x) \);

  \item \( (x, y, z) \in C\) if and only if \( (x < y < z) \lor (z < x < y) \lor (y < z < x) \);

  \item \( (x, y, z, t) \in S\) if and only if \begin{align*}
           & (((x, y, z) \in C \land (y, z, t) \in C \land (z, t, x) \in C \land (t, x, y) \in C)\ \lor\ \\
           & ((t, z, y)\in C \land (z, y, x) \in C \land (y, x, t) \in C \land (x, t, z) \in C).
        \end{align*}
\end{itemize}
We remark that the three relations above are the only first-order reducts of \((\mathbb{Q}, <)\) up to first-order inter-definability; see~\cite{Cameron1976}.

Let \(G\) denote the set of vertices of the random graph, and for \(k \geq 2\) let \(R^{(k)}\) be the \(k\)-ary relation on \(G\) given by \( (x_1, \ldots, x_k) \in R^{(k)}\) if and only if \(x_1, \ldots, x_k\) are pairwise distinct and the induced subgraph on \(\{x_1, \ldots, x_k\}\) has an odd number of edges. Then the first-order reducts of the random graph up to first-order inter-definability are \((G; R^{(3)})\), \((G; R^{(4)})\), and \((G; R^{(3)}, R^{(4)})\); see~\cite{Thomas1991aa}.

\begin{theorem}\label{cor-zariski-endomorphisms}
  The Zariski topology of \(S\) is the pointwise topology whenever
  \begin{enumerate}[\rm (a)]
    \item \(\Emb(\mathbb{X}) \leq S \leq \End(\mathbb{X})\) and \(\mathbb{X}\) is one of the following structures:
    \begin{enumerate}[\rm (i)]
        \item the random graph;

        \item \((V; R^{(3)})\), \((V; R^{(4)})\), and \((V; R^{(3)}, R^{(4)})\);

        \item the random directed graph;
    \end{enumerate}
    \item \(S = \End(\mathbb{X})\) and \(\mathbb{X}\) is one of the following structures:
    \begin{enumerate}[\rm (i)]
    \setcounter{enumii}{3}
        \item \((\mathbb{Q}, <)\);

        \item \((\mathbb{Q}, B)\), \((\mathbb{Q}, C)\), and \((\mathbb{Q}, S)\);
        
        \item the random tournament;

        \item \(n \mathbb{K}_\omega\) for every $n\in \mathbb{N}\cup \{\omega\}$;
        
        \item the random strict partial order.
    \end{enumerate}
  \end{enumerate}
\end{theorem}

%
%
%
%
%
%

\begin{proof}
  We proceed by showing that Lemma~\ref{lem-zariski-pointwise} applies in each of the cases of the theorem. In particular, we will show that the structures in the statement are $\omega$-categorical with no algebraicity, and so there exist \(\alpha, \beta\in\Emb(\mathbb{X})\) satisfying 
  \thref{lem-zariski-pointwise}(i) by \thref{lem-zariski-functions1}.
  The remaining functions \(\gamma_{1}, \ldots, \gamma_{m}\) in the hypothesis of \thref{lem-zariski-pointwise} are obtained using \thref{lem-zariski-functions2} in cases (i)-(vi) (i.e. the structures in cases (i)-(vi) satisfy the hypothesis of Theorem~\ref{arsfacere}) and explicitly constructed in cases (vii) and (viii). It is worth noting that in the cases (iv)-(vi) the endomorphism and embedding monoids of the structures are equal.
  
   It is well-known  that the ages of \((\mathbb{Q}, <)\), the random strict partial order, the random graph, the random directed graph, the random tournament, and \(n \mathbb{K}_\omega\) have the strong amalgamation property. Moreover, each of these structures is a Fra\"{i}ss\'{e} limit, and so they are all homogeneous. Since the strong amalgamation property is equivalent to no algebraicity for homogeneous structures~\cite[(2.15)]{Cameron1990}, it follows that the structures in 
  parts (i), (iii), (iv), (vi), (vii), and (viii) have no algebraicity. 
  Furthermore, if \(\mathbb{Y}\) is a reduct of \(\mathbb{X}\), then \(\Aut(\mathbb{X}) \leq\Aut(\mathbb{Y})\), and so if \(\mathbb{X}\) has no algebraicity, so does \(\mathbb{Y}\). Hence the structures \((\mathbb{Q}, B)\), \((\mathbb{Q}, C)\),  \((\mathbb{Q}, S)\), \((V; R^{(3)})\), \((V; R^{(4)})\), and \((V; R^{(3)}, R^{(4)})\) also have no algebraicity. 
  
  As discussed above arsfacere holds trivially in the cases of \((\mathbb{Q}, <)\) and the random tournament, since in each of these structures some permutation of every tuple is related in the unique relation of that structure.
  The arguments that the structures in cases (i)-(iii) satisfies the hypothesis of \thref{lem-zariski-functions2} are similar, and so, for the sake of brevity, we only include the argument for the random graph.
 
  Let \(\mathbb{G} = (G; E^{\mathbb{G}})\) denote the random graph.
  Suppose that \(a \in G\) and \(F_1, F_2 \subseteq G\) are finite such that \(F_1 \cap F_2 = \{a\}\). Define a graph \(\mathbb{H}\) on the domain \(F_1 \cup F_1\) such that two vertices \(x\) and \(y\) are adjacent if and only if one of the following holds: \(x, y \in F_1\) and \( (x, y) \in E^{\mathbb{G}}\); \(x, y \in F_2\) and \( (x, y) \in E^{\mathbb{G}}\); or \(x \in F_1 \setminus \{a\}\), \(y \in F_2 \setminus \{a\}\) (or vice versa). Then \(\mathbb{H}\) embeds into the random graph via some map \(e \colon F_1 \cup F_2 
  \to G\). Moreover, \(e\) can be seen as a map (but not necessarily a homomorphism) between induced substructures of \(\mathbb{G}\), in which case \(e{\restriction_{F_i}}\) is an isomorphism for \(i \in \{1, 2\}\) and \( ((x_1)e, (x_2)e) \in E^{\mathbb{G}}\) for every \((x_1, x_2) \in (F_1 \setminus \{x\}) \times (F_2 \setminus \{x\})\). Therefore the random graph satisfies the hypothesis of \thref{lem-zariski-functions2}. 
  
  Next, we show directly that there are \(\gamma_{1}, \ldots, \gamma_{m} \in \End(n \mathbb{K}_\omega)\) satisfying \thref{lem-zariski-pointwise}(ii) and~(iii). 
  Denote the domain of \(n \mathbb{K}_\omega\) by \(D\).
  Let \(a \in D\) be fixed, and let \(C_1, C_2 \subseteq D\) be infinite such that \(C_1 \cap C_2 = \{a\}\) and \(C_1 \cup C_2\) is contained in a copy of \(\mathbb{K}_\omega\). For \(i \in \{1, 2\}\), let \(\gamma_{i} \colon D \to C_i\) be any surjection. Then \(\gamma_{1}, \gamma_{2} \in \End(n \mathbb{K}_\omega)\) and \(a \in \im(\gamma_{i})\) for \(i \in \{1, 2\}\).
  Suppose that \(s \in \End(n \mathbb{K}_\omega)\) and let \(x \in D \setminus \{(a)s\}\). Then \(a \notin (x)s^{-1}\), and so if there are \(x_i \in \im(\gamma_{i}) \cap (x)s^{-1}\) for both \(i \in \{1,2\}\), then $x_1 \neq a$ and $x_2 \neq a$, and so \(x_1\) is adjacent to \(x_2\). Hence \( (x, x) = ( (x_1)s, (x_2)s)\) is an edge in \(n \mathbb{K}_\omega\), which is a contradiction. Therefore, 
  \(\im(\gamma_{i}) \cap (x)s^{-1} = \varnothing\) for some \(i \in \{1, 2\}\).
  
  Finally we consider the random strict partial order; we denote this by \((P, <)\). The argument is similar to the argument for $n\mathbb{K}_{\omega}$. Let \(a \in P\) be fixed. Let \(\mathbb{P}_1 = (P_1, <^{\mathbb{P}_1})\) be a copy of $(P, <)$, and let \(\mathbb{P}_2\) be an isomorphic copy of \(\mathbb{P}_1\) via an isomorphism which fixes \(a\) such that \(P_1 \cap P_2 = \{a\}\). 
  Define a new partial order \(\mathbb{A}\) with the domain set \(P_1 \cup P_2\) such that \(y <^{\mathbb{A}} z\) if and only if one of the following holds:
  \(y <^{\mathbb{P}_1} a <^{\mathbb{P}_2} z\);   \(y <^{\mathbb{P}_2} a <^{\mathbb{P}_1} z\);  $y < ^ {\mathbb{P}_1} z$; or $y < ^ {\mathbb{P}_2} z$. Since every countable partially ordered set embeds into \((P, <)\), it follows that \(\mathbb{A}\) embeds into \((P, <)\) via an embedding \(e\). Hence, for each \(i \in \{1, 2\}\), we can find an endomorphism  \(
  \gamma_{i} \colon P \to (P_i)e\) of \((P, <)\) , such that \(\im(\gamma_{i}) = (P_i)e\). It can be shown in a similar way as above that \(\gamma_{1}\) and \(\gamma_{2}\) are the required endomorphisms satisfying \thref{lem-zariski-pointwise}(ii) and (iii).
\end{proof}
}

We conclude the section with yet another case when the pointwise topology is minimal.

\begin{prop}\label{prop-luke-9}
Let \(\mathbb{A}\) be a finite relational structure and let $\omega \mathbb{A}$ be the disjoint union of countably infinitely many copies of $\mathbb{A}$. Then every \(T_1\) topology semitopological for \(\End(\omega \mathbb{A})\) contains the pointwise topology.
\end{prop}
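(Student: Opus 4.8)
The plan is to show that each subbasic pointwise-open set
\(U_{y,z} = \{f \in \End(\omega\mathbb{A}) : (y)f = z\}\) is \(\T\)-open (in fact \(\T\)-clopen) for an arbitrary \(T_1\) semitopological topology \(\T\) on \(S := \End(\omega\mathbb{A})\); since the \(U_{y,z}\) form a subbasis for the pointwise topology, this gives the containment. Write \(X\) for the domain of \(\omega\mathbb{A}\) and \(A_0, A_1, \ldots\) for its copies of \(\mathbb{A}\), each of which is finite. The only \(\T\)-closed sets I will produce from the hypotheses are the finite ones (available from \(T_1\)) together with their preimages under the continuous translations \(\lambda_g,\rho_g\); the whole task is to exhibit \(U_{y,z}\) as such a preimage.

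First I would record two families of endomorphisms of \(\omega\mathbb{A}\). For each \(j\), let \(\pi_j \in S\) be the \emph{fold map} sending every copy \(A_i\) isomorphically onto \(A_j\) by the canonical isomorphism; as \(\omega\mathbb{A}\) has no relations between distinct copies, \(\pi_j\) is an endomorphism, its image \(A_j\) is finite, and \((y)\pi_j = y\) for \(y \in A_j\). For each \(z \in X\), let \(f_z \in S\) act as the identity on the copy \(C\) containing \(z\) and map every copy other than \(C\) isomorphically onto one fixed further copy \(C' \ne C\); this is again an endomorphism, its image \(C \cup C'\) is finite, and \((z)f_z^{-1} = \{z\}\), so that \(w := (z)f_z = z\) has \(z\) as its unique \(f_z\)-preimage.

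The crux is the composite translation \(\Psi := \lambda_{\pi_j}\rho_{f_z}\), that is \((f)\Psi = \pi_j f f_z\), where \(j\) is chosen so that \(y \in A_j\). This \(\Psi\) is continuous because \(\T\) is semitopological, and its decisive feature is that it has \emph{finite image}: for every \(x \in X\) one computes \((x)\big(\pi_j f f_z\big) = \big((\bar x)f\big)f_z\), where \(\bar x \in A_j\) is the image of \(x\) under \(\pi_j\), so \((f)\Psi\) is completely determined by the finite tuple \(\big(\,((u)f)f_z\,\big)_{u \in A_j} \in (\operatorname{im} f_z)^{A_j}\) — both the index set \(A_j\) and the value set \(\operatorname{im} f_z\) being finite, which is exactly where the finiteness of \(\mathbb{A}\) is used. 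Hence \(V := (S)\Psi\) is a finite subset of \(S\). Moreover the value at \(y\) is recoverable: since \((y)\pi_j = y\) we get \((y)\big((f)\Psi\big) = ((y)f)f_z\), which by the unique-preimage property of \(f_z\) equals \(w\) if and only if \((y)f = z\). Therefore
\[
U_{y,z} = \Psi^{-1}\big(\{v \in V : (y)v = w\}\big), \qquad S \setminus U_{y,z} = \Psi^{-1}\big(\{v \in V : (y)v \ne w\}\big).
\]
Both target sets are finite subsets of \(S\), hence \(\T\)-closed by \(T_1\), so by continuity of \(\Psi\) both \(U_{y,z}\) and its complement are \(\T\)-closed; in particular \(U_{y,z}\) is \(\T\)-open, as required.

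The hard part, and the reason Lemma~\ref{lem-luke-somenumber} cannot simply be quoted, is that \(\omega\mathbb{A}\) need not possess any constant endomorphisms: when \(\mathbb{A}\) has no loops (for example \(\mathbb{A} = \mathbb{K}_n\)) there are no constant maps in \(S\), so the standard argument for the closedness of the sets \(U_{y,z}\) via left multiplication by constants is unavailable. The device replacing the constant maps is precisely the composite \(\Psi\): left multiplication by \(\pi_j\) collapses all but the finitely many coordinates in \(A_j\), while right multiplication by \(f_z\) both drives the image into a finite set and, through the fact that \(z\) is the unique \(f_z\)-preimage of \(w\), lets a single coordinate of \((f)\Psi\) witness the equation \((y)f = z\). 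The only points that then need care are verifying that \(\pi_j\) and \(f_z\) are genuine endomorphisms and that \(\Psi\) has finite image, and both rest solely on the finiteness of \(\mathbb{A}\) together with the absence of cross-copy relations.
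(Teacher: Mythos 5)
Your proof is correct and takes essentially the same approach as the paper's: the paper likewise forms the continuous two-sided translation $h \mapsto fhg$, where $f$ folds every copy onto the copy containing $y$ and $g$ fixes the copy containing $z$ while collapsing all other copies onto a single spare copy, observes that this translation has finite image by finiteness of $\mathbb{A}$, and uses the $T_1$ hypothesis to conclude that $U_{y,z}$ is open. The only cosmetic differences are that the paper chooses the spare copy disjoint from both relevant copies (your weaker condition $C' \neq C$ suffices, as you use only the unique-preimage property of $f_z$ at $z$) and phrases the final step via discreteness of the finite image and openness of kernel classes rather than clopenness of preimages of finite sets.
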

\begin{proof}
Suppose \(\End(\omega \mathbb{A})\) is semitopological with respect to some \(T_1\) topology. 
Let \(\mathbb{A}_0, \mathbb{A}_1, \ldots\) denote the disjoint copies of \(\mathbb{A}\) in \(\omega \mathbb{A}\), and let \(a, b\in \omega \mathbb{A}\) be arbitrary. 
It suffices to show that the set
\[U_{a, b}=\set{f\in \End(\omega\mathbb{A})}{(a)f=b}\]
is open.

Suppose that $i,j \in \N$ are such that $a\in A_{i}$ and $b\in A_{j}$, and let $k\in \N\setminus \{i, j\}$ be arbitrary. We pick $f\in \End(\omega \mathbb{A})$ such that $f|_{A_\ell}$ is an isomorphism from $A_\ell$ to $A_{i}$ for all $\ell\in \N\setminus\{i\}$ and $f|_{A_{i}}$ is the identity. Moreover, we pick $g\in \End(\omega\mathbb{A})$ such that $g$ is the identity on $A_{j}$ and $g|_{A_\ell}$ is an isomorphism from $A_\ell$ to $A_{k}$ for every $\ell\in \N \setminus\{j\}$.
%

Let \(\phi: \End(\omega\mathbb{A}) \to \End(\omega\mathbb{A})\) be the continuous map defined by \((h)\phi=fhg\).
By the construction of \(f\), it follows that for all \(t_1, t_2\in \im(\phi)\), 
\[t_1= t_2\quad \text{if and only if}\quad t_1{\restriction_{A_{i}}}= t_2{\restriction_{A_{i}}}.\]
By the construction of \(g\), \(\set{t{\restriction_{A_{i}}}}{t\in \im(\phi)}\) 
consists of all homomorphisms from $\mathbb{A}_{i}$ to $\mathbb{A}_{j} \cup \mathbb{A}_{k}$. In particular, this set is finite and so \(\phi\) has finite image. Since \(\End(\omega\mathbb{A})\) is \(T_1\), it follows that \(\im(\phi)\) is discrete and so the kernel classes of \(\phi\) are open. It follows that the set
\[V=\bigcup_{\set{t\in \im(\phi)}{(a)t= b}} (t)\phi^{-1}=\set{h \in \End(\omega \mathbb{A})}{(a)fhg=b}\]
is open. 
But, for any \(h \in \End(\omega \mathbb{A})\), \((a)h= b\) if and only if \((a)fhg=(a)((h)\phi)= b\), and so 
\(V=U_{a, b}\)
is open.
\end{proof}

\section{Property \textbf{X} and maximal topologies}
\label{section-maximal}
Recall from Section~\ref{section-intro} that if $S$ is a topological semigroup and $A$ is a subset of $S$, then we say
that $S$ satisfies \textit{property \textbf{X} with respect to $A$} if the
following holds:
\begin{quote}
  for every $s\in S$ there exists $f_s, g_s\in S$ and $t_s\in A$ such that
  $s = f_s t_s g_s$ and for every neighbourhood $B$ of $t_s$ the set $f_s
    (B\cap
    A)g_s$ is a neighbourhood of $s$.
\end{quote}
Property \textbf{X} is the crucial ingredient that we use to  determine maximal Polish
semigroup topologies, and to show automatic continuity.
The following theorem establishes the connection between property \textbf{X}, maximal Polish topologies, and automatic continuity. 
\begin{theorem}[cf. Theorem 3.1 in \cite{Elliott2019aa}]
  \thlabel{lem-luke-0}
  Let $S$ be a semigroup, let $\T$ be a semigroup topology for $S$, and let $A\subseteq S$. If $S$ has property \textbf{X}
  with respect to $A$, then the following hold:
  \begin{enumerate}[\rm (i)]
    \item
          \label{lem-luke-0-iii}
          if $\T$ is Polish and $A$ is a Polish subgroup of $S$, then $\T$ is
          maximal among the Polish semigroup topologies for $S$;

    \item
          \label{lem-luke-0-iv}
          if $A$ is a semigroup which has automatic continuity with respect to
          a class $\mathcal{C}$ of topological semigroups, then the semigroup
          $S$ has automatic continuity with respect to $\mathcal{C}$ also.
  \end{enumerate}
\end{theorem}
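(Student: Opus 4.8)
The plan is to prove both parts by reducing a comparison/continuity problem on all of $S$ to the corresponding problem on $A$, where the rigidity of Polish group topologies and the automatic continuity of $A$ are available, and then to transport the conclusion back to $S$ through property \textbf{X}. For part \eqref{lem-luke-0-iii} I would fix an arbitrary Polish semigroup topology $\T'$ on $S$ with $\T\subseteq\T'$ and aim to show $\T'=\T$. First I would move Polishness of $A$ into the finer topology: since $(A,\T|_A)$ is Polish and embeds in the Polish space $(S,\T)$, it is $G_\delta$ in $(S,\T)$; as $\T\subseteq\T'$ the identity $(S,\T')\to(S,\T)$ is continuous, so $A$ is $G_\delta$ in the Polish space $(S,\T')$ and hence $(A,\T'|_A)$ is Polish. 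Next I would observe that $\T'|_A$ is a topological group topology: multiplication on $A$ is jointly continuous (it is the restriction of the $\T'$-continuous multiplication of $S$), so $(A,\T'|_A)$ is a Polish paratopological group, and a Polish (indeed {\v C}ech-complete) paratopological group is automatically a topological group, giving continuity of inversion. Then $\T|_A$ and $\T'|_A$ are two Polish group topologies on the group $A$ with $\T|_A\subseteq\T'|_A$, so the folklore comparison cited in the introduction (\cite[Theorem 9.10 and Proposition 11.5]{Kechris1995aa}) yields $\T|_A=\T'|_A$.

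It then remains to lift this equality to $S$ via property \textbf{X}. Given $U\in\T'$ and $s\in U$, I would write $s=f_st_sg_s$ with $t_s\in A$ as supplied by property \textbf{X}. The map $a\mapsto f_sag_s$ is $\T'$-continuous, so restricting it to $A$ pulls $U$ back to a $\T'|_A$-open neighbourhood of $t_s$; since $\T'|_A=\T|_A$, this neighbourhood has the form $B\cap A$ for some $\T$-open $B\ni t_s$, and by construction $f_s(B\cap A)g_s\subseteq U$. Property \textbf{X} guarantees that $f_s(B\cap A)g_s$ is a $\T$-neighbourhood of $s$, so $U$ is a $\T$-neighbourhood of each of its points and hence $U\in\T$. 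This gives $\T'\subseteq\T$, whence $\T'=\T$ and $\T$ is maximal among Polish semigroup topologies.

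For part \eqref{lem-luke-0-iv} the same lifting mechanism handles homomorphisms. Let $\phi\colon S\to T$ be a homomorphism into some $T\in\mathcal C$; I would prove $\phi$ is continuous at each $s\in S$. The restriction $\phi{\restriction_A}\colon A\to T$ is a homomorphism, hence continuous by automatic continuity of $A$. Writing $s=f_st_sg_s$, the map $a\mapsto (f_s)\phi\,(a)\phi\,(g_s)\phi=(f_sag_s)\phi$ is continuous on $(A,\T|_A)$, being $\phi{\restriction_A}$ composed with the continuous left and right translations by $(f_s)\phi$ and $(g_s)\phi$ in $T$, and it sends $t_s$ to $(s)\phi$. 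Thus for every neighbourhood $W$ of $(s)\phi$ there is a $\T$-open $B\ni t_s$ with $(f_s(B\cap A)g_s)\phi\subseteq W$; property \textbf{X} makes $f_s(B\cap A)g_s$ a $\T$-neighbourhood of $s$, so $\phi$ is continuous at $s$, and since $s$ was arbitrary $\phi$ is continuous.

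The step I expect to be the main obstacle is the passage, inside part \eqref{lem-luke-0-iii}, from the a priori merely Polish and multiplicatively continuous topology $\T'|_A$ to a genuine topological group topology, i.e.\ establishing continuity of inversion on $(A,\T'|_A)$; only once this is in place does the Polish-group comparison theorem apply. Everything else is careful but routine bookkeeping of neighbourhoods through property \textbf{X} and the observation that the equality $\T|_A=\T'|_A$ lets a relatively $\T'$-open subset of $A$ be realised as $B\cap A$ for a $\T$-open $B$.
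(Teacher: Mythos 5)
The paper does not actually prove this theorem; it is imported verbatim (``cf.\ Theorem 3.1 in \cite{Elliott2019aa}'') from the cited reference, so there is no in-text proof to compare against. Your argument is correct and follows the same standard route as the cited source: restrict the finer Polish topology to the subgroup $A$ (using the $G_\delta$ characterisation of Polish subspaces), invoke the Montgomery--Ellis theorem that a group with a Polish topology and (even only separately) continuous multiplication is a topological group, apply the rigidity of comparable Polish group topologies to get $\T{\restriction_A}=\T'{\restriction_A}$, and then transfer neighbourhoods back to all of $S$ via property \textbf{X}; the automatic continuity part is the same transfer applied to $\phi{\restriction_A}$ composed with translations in the target.
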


We start with a special case, that of $\omega \mathbb{K}_n$.  
\begin{prop}\label{prop-xxx}
  The monoid \(\End(\omega \mathbb{K}_n)\) has property~\textbf{X} with respect to the
  pointwise topology and \(\Aut(\omega \mathbb{K}_n)\) for every natural number \(n \geq 1\). The same result holds for \(\omega \mathbb{K}_n\) with loops.
\end{prop}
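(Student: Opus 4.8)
The plan is to exploit the wreath-product description of the endomorphism monoid. Label the vertices of $\omega\mathbb{K}_n$ by pairs $(i,x)$ with $i\in\N$ indexing the cliques and $x\in\{1,\dots,n\}$ indexing the vertices inside a clique. Since adjacent vertices lie in a common clique, every endomorphism sends each clique into a single clique, and this yields an isomorphism $\End(\omega\mathbb{K}_n)\cong M_n\wr\N^\N$, where $M_n=\Sym(n)$ in the loop-free case and $M_n=\mathbf{T}_n$, the full transformation monoid on $\{1,\dots,n\}$, in the case with loops; an endomorphism is recorded as a pair $((\sigma_i)_{i\in\N},\bar s)$ acting by $(i,x)s=((i)\bar s,(x)\sigma_i)$, and in both cases $\Aut(\omega\mathbb{K}_n)\cong\Sym(n)\wr\Sym(\N)$. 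I would verify property \textbf{X} by producing, for each $s$, a factorisation $s=f_s\,t_s\,g_s$ in which $f_s$ is a self-embedding, $g_s$ is a surjective endomorphism all of whose fibres are infinite, and $t_s\in\Aut(\omega\mathbb{K}_n)$. The point of this shape is that $\ker(f_s)$ is trivial and $\im(g_s)$ is everything, so neither outer factor obstructs reaching the endomorphisms $s'$ pointwise close to $s$, which may have arbitrary kernel and image on the unfixed cliques.

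Concretely, I would fix once and for all an injection $\bar f\in\N^\N$ with co-infinite image and a surjection $\bar g\in\N^\N$ all of whose fibres are infinite, arranged so that inside each fibre $\bar g^{-1}(k)$ the internal maps $(\psi_j)_{j\in\bar g^{-1}(k)}$ realise every element of $M_n$ infinitely often; set $f_s=((\id)_i,\bar f)$ and $g_s=((\psi_j)_j,\bar g)$. Then $f_s$ is a self-embedding and $g_s$ is surjective with infinite fibres. The automorphism $t_s=((\tau_i)_i,\bar t)$ is built to realise $s$: for each clique $i$ I route $(i)\bar f$ under $\bar t$ to some $j\in\bar g^{-1}((i)\bar s)$ chosen so that $\psi_j=\sigma_i$ (possible since every element of $M_n$ occurs in the fibre), with $\tau_{(i)\bar f}=\id$. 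These routings can be made injective because the fibres are infinite, and $\bar t$ is extended to a bijection of $\N$ by a back-and-forth argument, using that $\im(\bar f)$ is co-infinite and that the partial image can be kept co-infinite (e.g.\ always choosing $j$ from a fixed ``half'' of each fibre). A short check gives $s=f_s t_s g_s$.

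For the neighbourhood condition it suffices to treat arbitrarily small basic neighbourhoods $B$ of $t_s$, since $f_s(B'\cap A)g_s\subseteq f_s(B\cap A)g_s$ for $B'\subseteq B$. Such a $B$ fixes $\bar t$ on a finite index set $I'$ and fixes $\tau_i$ for $i$ in a finite set $J'$. I would take the target neighbourhood to be the set $N_I(s)$ of endomorphisms agreeing with $s$ on all cliques indexed by a finite $I\supseteq\bar f^{-1}(I'\cap\im\bar f)$, and show $N_I(s)\subseteq f_s(B\cap A)g_s$. Given $s'=((\sigma_i'),\bar s')\in N_I(s)$, I build $t'=((\tau_i'),\bar t')\in\Aut$ with $f_s t' g_s=s'$ and $t'\in B$: on the cliques indexed by $I$ and on $I'$ I keep $t'$ equal to $t_s$, which is consistent because $s'$ agrees with $s$ there; for each remaining clique $i$ I reroute $(i)\bar f$ under $\bar t'$ to some $j'\in\bar g^{-1}((i)\bar s')$ with $\psi_{j'}=\sigma_i'$ and set $\tau'_{(i)\bar f}=\id$. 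Since every $\tau$ is the identity on $\im\bar f$, the constraint $\tau_i'=\tau_i$ on $J'$ is automatic there and trivially arranged off $\im\bar f$, while $\bar t'|_{I'}=\bar t|_{I'}$ is compatible with the rerouting precisely because of the choice of $I$. Infinitude of the fibres again makes the rerouting injective with co-infinite image, so $\bar t'$ completes to a bijection, giving $t'\in B\cap\Aut$ with $f_s t' g_s=s'$.

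The main obstacle is the loop case, where $M_n=\mathbf{T}_n$ contains non-injective maps while the middle factor $t_s$ must remain an automorphism and hence permutes within cliques; a single fixed pair of internal maps on $f_s$ and $g_s$ cannot produce every $\sigma_i'\in\mathbf{T}_n$ from a permutation $\tau'$. The device that resolves this, and that also covers the loop-free case uniformly, is to load all internal data onto $g_s$ by letting the maps $\psi_j$ exhaust $M_n$ within each infinite fibre of $\bar g$, so that the choice of \emph{which} preimage $j$ to route to simultaneously pins down the correct clique image and the correct internal map. The remaining bookkeeping, namely injectivity of the partial map $\bar t'$, keeping its image co-infinite, and the finite consistency conditions imposed by $I'$ and $J'$, is routine given the infinitude of the fibres.
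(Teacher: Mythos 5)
Your proposal is correct and takes essentially the same approach as the paper: the same wreath-product identification, the same outer factors (an embedding with identity internal maps over an injective base map with co-infinite image, and a surjection whose infinite fibres realise every element of $\End(\mathbb{K}_n)$ infinitely often), the same routing automorphism $t_s$, and the same reduction to basic neighbourhoods with a rerouting argument. The only differences are presentational, e.g.\ the paper shows the set $f(U\cap A)g$ actually \emph{equals} the basic neighbourhood of $s$, whereas you prove only the one inclusion that is needed, and you spell out the back-and-forth completion of the partial base permutation which the paper leaves implicit.
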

\begin{proof}

As will become apparent, the presence or absence of loops in $\omega\mathbb{K}_n$   is not relevant for the proof we are about to present. As such  $\omega\mathbb{K}_n$ should, in the mind of the reader, represent the structure with, or without, loops, according to their preference. 
It will be convenient to 
identify $\End(\omega \mathbb{K}_n)$ with the wreath product $\End(\mathbb{K}_n) \wr \N ^ \N$ where $\mathbb{K}_n$ either has loops or does not depending on the choice made in the previous sentence. It is clear that in this context $\Aut(\omega \mathbb{K}_n)$ is
$\Sym(\mathbb{K}_n)\wr \Sym(\N)$.

We will define the functions $f_s$ and $g_s$ from the definition of property \textbf{X} independently of $s$, and so we do not use the subscript in the remainder of this proof.

Let \(\phi:\N\to \N\) be an injective function with $\N\setminus (\N)\phi$ infinite. We define the element $f\in \End(\mathbb{K}_n) \wr \N ^ \N$ from the definition of property \textbf{X} to be:
\[f:= ((\id_{\mathbb{K}_n})_{i\in \N}, \phi).\]

If $\gamma \in \N^{\N}$ is any surjection such that the preimage of every point is infinite, then we define
  the second element $g\in \End(\mathbb{K}_n) \wr \N ^ \N$ from property \textbf{X} to be:
\[
g := ((h_i)_{i \in \N}, \gamma)
\]
for any sequence $(h_i)_{i \in \N}$ such that 
every set $A_{j, r} = \set{k\in \N}{(k)\gamma = j,\ h_k = r}$ is infinite where $j\in \N$ and $r\in \End(\mathbb{K}_n)$. 

For \(s := ((s_i)_{i \in \N}, \mu) \in \End(\mathbb{K}_n)\wr \N ^ \N\), we define \(t_s\in \Sym(\mathbb{K}_n) \wr \Sym(\N)\) to be 
\[
t_s = ((\id_{\mathbb{K}_n})_{i\in \N}, \nu)
\]
where $\nu$ is any permutation on $\N$ such that 
$(i)\phi\nu \in A_{(i)\mu,s_i}$ 
for every $i\in \N$.

It follows that 
\[
ft_sg = ((\id_{\mathbb{K}_n})_{i\in \N}, \phi)\ ((\id_{\mathbb{K}_n})_{i\in \N}, \nu)\ ((h_i)_{i\in \N}, \gamma)
=  ((\id_{\mathbb{K}_n})_{i\in \N}, \phi\nu)\ ((h_i)_{i\in \N}, \gamma)
= ((h_{(i)\phi\nu})_{i \in \N}, \phi\nu\gamma).
\]
If $i\in \N$ is arbitrary, then $(i)\phi\nu\in A_{(i)\mu, s_i}$ and so $((i)\phi\nu)\gamma = (i)\mu$ and $h_{(i)\phi\nu} = s_i$. Hence 
\begin{equation}\label{eq-ftsg}
ft_sg = ((h_{(i)\phi\nu})_{i \in \N}, \phi\nu\gamma) = ((s_i)_{i\in \N}, \mu)=s.
\end{equation}

Let $U$ be an arbitrary open neighbourhood of $t_s$. We may assume without loss of generality that 
\[
U = \set{((u_i)_{i \in\N}, \zeta)}{u_j = \id_{\mathbb{K}_n}\text{ and } (j)\zeta = (j)\nu \text{ for all } j\in F}\] for some finite subset $F$ of $\N$. We will show that $fUg$ is a neighbourhood of $s$. More precisely, we will show that the set $fUg$ and
\[
V := \set{((u_i)_{i \in\N}, \zeta)}{u_j = s_j \text{ and } (j)\zeta = (j)\mu \text{ for all } j\in (F)\phi ^ {-1}}
\]
coincide.
Note that if $v = ((u_i)_{i \in\N}, \zeta)\in V$, then $(j)\zeta = (j)\mu = (j)\phi\nu\gamma$ for all $j\in F$.
That $fUg\subseteq V$ follows after a routine computation. 

For the converse suppose that $v := ((u_i)_{i \in\N}, \zeta) \in V$. If
\(t_v\in \Sym(\mathbb{K}_n) \wr \Sym(\N)\) is such that
\[
t_v = ((\id_{\mathbb{K}_n})_{i\in \N}, \xi)
\]
where $\xi$ is any permutation on $\N$ such that $(i)\phi\xi \in A_{(i)\zeta, u_i}$ 
for every $i\in \N$, then, as in \eqref{eq-ftsg}, $v = ft_vg$.
To conclude the proof, it suffices to show that $\xi$ can be chosen such that $t_v\in U$.

If $j \in F\setminus\im(\phi)$, then the value of $(j)\xi$ does not affect the value of $ft_vg$.
In particular, we may choose $(j)\xi = (j)\nu$ for all $j\in F\setminus\im(\phi)$. On the other hand, if
 $j\in F \cap \im(\phi)$, then $(k)\phi = j$ for some $k\in \N$ and so
 $(j)\nu=(k)\phi \nu \in A_{(k)\mu, s_k}$.
 Since $k=(j)\phi^{-1}\in F \phi^{-1}$ and by the definition of $V$,
 $(k)\zeta = (k)\mu$ and $u_k = s_k$. Thus $A_{(k)\mu, s_k} = A_{(k)\zeta, u_k}$
 and we may let $(j)\xi=(j)\nu$ whenever $j\in \im(\phi) \cap F$.
\end{proof}

We will use 
Proposition~\ref{prop-xxx} to show that the 
endomorphism monoid $\End(\omega \mathbb{K}_n)$ of $\omega \mathbb{K}_{n}$ (with and without loops) has automatic continuity with respect to the class of second countable topological semigroups; see Theorem~\ref{thm-final}.

\subsection{A sufficient condition for property \textbf{X}}

  Suppose that \(\mathbb{A}\) and \(\mathbb{B}\) are structures in some fixed
  signature. Then a non-empty collection \(S\) of isomorphisms between finitely
  generated substructures of \(\mathbb{A}\) and \(\mathbb{B}\) is a \emph{back
  and forth system} if every \(f \in S\) satisfies:
  \begin{description}
    \item[Forth] if \(a \in A \setminus \dom(f)\) then there is \(g \in S\) such that
      \(f \subseteq g\) and \(a \in \dom(g)\);

    \item[Back] if \(b \in B \setminus \im(f)\) then there is \(g \in S\) such that
      \(f \subseteq g\) and \(b \in \im(g)\).
  \end{description}
  We will call a system a \emph{forth system} if it satisfies the first property and a \emph{back system} if it satisfies the second property above. 

\begin{lem}[cf. Section 3.2 in \cite{Hodges1997aa}]\label{lem:baf-iso}
  Suppose that \(\mathbb{A}\) and \(\mathbb{B}\) are at most countable
  structures in some fixed signature and let \(S\) be a back and forth system.
  If \(f \in S\), then there are \(f_0 := f, f_1, \ldots \in S\) such that
  \(f_0 \subseteq f_1 \subseteq \ldots\) and so that \(\bigcup_{i = 0}^\infty
  f_i\) is an isomorphism from \(\mathbb{A}\) to \(\mathbb{B}\).
\end{lem}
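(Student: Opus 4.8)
The plan is to carry out the standard back-and-forth construction, building the chain $f_0 \subseteq f_1 \subseteq \cdots$ one element at a time while alternately feeding in the missing points of $A$ and of $B$. Since $\mathbb{A}$ and $\mathbb{B}$ are at most countable, I would first fix enumerations $A = \{a_0, a_1, \ldots\}$ and $B = \{b_0, b_1, \ldots\}$ (finite or infinite), and set $f_0 := f$. Having defined $f_i \in S$, at even stages I use the \textbf{Forth} property on the least-indexed $a_m \notin \dom(f_i)$ to obtain $f_{i+1} \in S$ with $f_i \subseteq f_{i+1}$ and $a_m \in \dom(f_{i+1})$; at odd stages I use the \textbf{Back} property on the least-indexed $b_m \notin \im(f_i)$ to obtain $f_{i+1} \in S$ with $f_i \subseteq f_{i+1}$ and $b_m \in \im(f_{i+1})$. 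When $\dom(f_i)$ already equals $A$ (respectively $\im(f_i) = B$) the corresponding step is vacuous and I simply set $f_{i+1} := f_i$; this also handles the case where one of the structures is finite, so that an infinite increasing sequence in $S$ is always produced.

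Next I would set $F := \bigcup_{i=0}^\infty f_i$ and argue that $F$ is an isomorphism from $\mathbb{A}$ onto $\mathbb{B}$. Because the $f_i$ form an increasing chain of (partial) functions, $F$ is itself a well-defined injective partial function. Totality of $F$ on $A$ follows from the forth steps: each $a_m$ is, at the first even stage at which it is still missing and least-indexed, forced into some $\dom(f_i)$, so $\dom(F) = \bigcup_i \dom(f_i) = A$. Symmetrically, the back steps give $\im(F) = \bigcup_i \im(f_i) = B$, so $F$ is a bijection $A \to B$. Thus the remaining task is to check that $F$ preserves and reflects all the structure in the signature.

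For that, the key observation is that every atomic condition is a finite condition. If $c$ is a constant name, then $c^{\mathbb{A}} \in \dom(f_0)$ (each $\dom(f_i)$ is a substructure, hence contains all constants) and $(c^{\mathbb{A}})f_0 = c^{\mathbb{B}}$. If $\mathbf{x}$ is a finite tuple from $A$, its finitely many entries all lie in $\dom(f_i)$ for some single $i$ (take the maximum of the stages at which they enter), and likewise any value $(\mathbf{x})h^{\mathbb{A}}$ of an operation lies in $\dom(f_i)$ since $\dom(f_i)$ is closed under operations; as $f_i$ is an isomorphism between the induced substructures $\dom(f_i)$ and $\im(f_i)$, it preserves the operation value and both preserves and reflects membership in each relation $R$. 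Since $F$ agrees with $f_i$ on these points, the same holds for $F$. I expect the only point deserving care — and the closest thing to an obstacle — to be precisely this reflection direction ($\mathbf{x} \in R^{\mathbb{A}}$ \emph{if and only if} $(\mathbf{x})F \in R^{\mathbb{B}}$): it is what distinguishes an isomorphism from a mere homomorphism, and it relies essentially on $S$ consisting of \emph{isomorphisms} between substructures rather than homomorphisms, together with the finiteness of atomic conditions just described. Everything else is routine bookkeeping, so I would present the construction and exhaustion argument in full and then dispatch the preservation/reflection verification in one paragraph.
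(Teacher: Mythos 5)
Your proposal is correct: it is exactly the classical back-and-forth (zigzag) argument that the paper invokes by citing Section 3.2 of Hodges rather than proving itself, namely alternately exhausting enumerations of $A$ and $B$ via the \textbf{Forth} and \textbf{Back} properties and checking that the union of the resulting chain is a bijection preserving and reflecting all atomic structure. Your attention to the reflection of relations (which is where the hypothesis that $S$ consists of isomorphisms, not mere homomorphisms, is used) and to the vacuous-step handling when a structure is finite is precisely the bookkeeping the standard proof requires, so nothing is missing.
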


Throughout the remainder of this section we suppose that $\mathbb{X}$ is relational structure. 
If $f\in \Emb(\mathbb{X})$ and $g, h\in \End(\mathbb{X})$, then we define $S_{f, g, h}$ to be the set of finite partial
isomorphisms \(p\) of \(\mathbb{X}\) such that \(fp g \subseteq h\) and \(pg \cup f^{-1}h\) is a partial homomorphism. Note that $S_{f,g,h}$ is closed under restrictions of functions.

We will prove in a sequence of three lemmas, that subject to certain conditions on $f$ and $g$, $S_{f, g, h}$ is a back and forth system. There are three such conditions, which we refer to as \textbf{back}, \textbf{fo}, and \textbf{rth} (defined below) for lack of any more descriptive names.
These conditions are a little technical and they apply to a range of different relational structures. However, showing that these conditions hold is relatively straightforward in many instances. Proposition~\ref{prop-xxx} is a simple example which may be helpful to keep in mind when parsing the more general conditions.
    
\begin{de} \label{prop-structures-iv} 
  An endomorphism $g \in \End(\mathbb{X})$ is   
  \textbf{rth} if it is surjective and for all finite \(U \subseteq X\),
  \(u \in X \setminus U\), and \(v \in X\) such that  
  \(g{\restriction_{U}}\cup \{(u,v)\}\) is a homomorphism,
  there exists $\alpha$ in the stabiliser \( \Aut(\mathbb{X})_U\) with \((u)\alpha g=v\).
\end{de}
      
\begin{lem}\label{lem-flex}
  Let $f\in \Emb(\mathbb{X})$ and let  $g, h\in \End(\mathbb{X})$. 
 If $g$ is \textbf{rth} and $p\in S_{f, g, h}$ and $a\in \im(f)\setminus \dom(p)$, then there exists  $q\in S_{f,g,h}$ such that $p\subseteq q$ and $a\in \dom(q)$.
\end{lem}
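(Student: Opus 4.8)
The plan is to realise the required $q$ as a one-point extension $q = p \cup \{(a,b)\}$ and to produce $b$ by applying the \textbf{rth} property of $g$ (Definition~\ref{prop-structures-iv}) to an auxiliary element. Write $w = (a)f^{-1}$, which is well defined since $a \in \im(f)$ and $f$ is injective. The first observation, and the reason the hypothesis $a \in \im(f)$ matters, is that once $(b)g = (w)h$ holds, most of the conditions defining $S_{f,g,h}$ come for free: since $a \in \im(f)$ we have $(a)(f^{-1}h) = (w)h$, so the only new pair of $qg$, namely $(a,(b)g)=(a,(w)h)$, already lies in $f^{-1}h$; hence $qg \cup f^{-1}h = pg \cup f^{-1}h$, which is a partial homomorphism because $p \in S_{f,g,h}$. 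Likewise $fqg \subseteq h$ reduces to the single new instance at $w$, where $(w)(fqg) = (b)g = (w)h$, together with the inclusion $fpg \subseteq h$. Thus everything comes down to producing $b \notin \im(p)$ with $(b)g = (w)h$ such that $p \cup \{(a,b)\}$ is a partial isomorphism.

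Next I would produce a preliminary witness $u$ and then correct it using \textbf{rth}. Extending the finite partial isomorphism $p$ to an automorphism $\beta$ of $\mathbb{X}$ (this is where homogeneity of $\mathbb{X}$ enters, and is the only place where more than the bare relational hypothesis is needed), set $u := (a)\beta$; then $\iota := p \cup \{(a,u)\}$ is a partial isomorphism, so in particular $u \notin \im(p)$. The key computation is to verify that $g\restriction_{\im(p)} \cup \{(u,(w)h)\}$ is a homomorphism, so that \textbf{rth} may be applied with $U = \im(p)$ and $v = (w)h$. For this, take any relation symbol $R$ of arity $m$ and any tuple $\bar t \in R^{\mathbb{X}}$ with entries in $\im(p) \cup \{u\}$, and pull it back through $\iota$ to get $\bar r := (\bar t)\iota^{-1} \in R^{\mathbb{X}}$ with entries in $\dom(p) \cup \{a\}$. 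Applying the partial homomorphism $\theta := pg \cup f^{-1}h$, which is defined on $\bar r$ because $\dom(p) \cup \{a\} \subseteq \dom(p) \cup \im(f) = \dom(\theta)$, sends each entry in $\dom(p)$ to its $g$-image and sends $a$ to $(a)(f^{-1}h) = (w)h$; hence $(\bar r)\theta$ is exactly the image of $\bar t$ under $g\restriction_{\im(p)} \cup \{(u,(w)h)\}$. Since $\theta$ is a partial homomorphism, $(\bar r)\theta \in R^{\mathbb{X}}$, which gives the claim.

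Finally I would invoke \textbf{rth} to obtain $\alpha \in \Aut(\mathbb{X})_{\im(p)}$ with $(u)\alpha g = (w)h$, and set $b := (u)\alpha$ and $q := p \cup \{(a,b)\}$. Then $(b)g = (w)h$ as needed. Because $\alpha$ fixes $\im(p)$ pointwise, one checks that $q = \iota\alpha$, the composite of the partial isomorphism $\iota$ with the automorphism $\alpha$, so $q$ is again a partial isomorphism and $b = (u)\alpha \notin \im(p)$. Membership $q \in S_{f,g,h}$ now follows from the reductions of the first paragraph, while $p \subseteq q$ and $a \in \dom(q)$ are immediate.

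I expect the main obstacle to be the homomorphism verification in the second paragraph: one must thread $\bar t$ through $\iota^{-1}$ and then $\theta$ and confirm that the two descriptions of the image agree, being careful that $\theta$ is genuinely defined and single-valued on the pulled-back tuple (precisely where $a \in \im(f)$ and the defining conditions of $S_{f,g,h}$ are used). The only other delicate point is the extension of $p$ to an automorphism, which relies on the homogeneity of $\mathbb{X}$.
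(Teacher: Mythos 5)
Your proof is correct and takes essentially the same route as the paper's: both obtain a preliminary witness for the image of $a$ via homogeneity, verify that $g{\restriction_{\im(p)}}$ together with the new pair is a homomorphism (your tuple-chase through $\iota^{-1}$ and $\theta = pg \cup f^{-1}h$ is exactly the paper's step of multiplying $pg \cup \{(a,(a)f^{-1}h)\}$ on the left by the inverse of $p \cup \{(a,b)\}$), apply \textbf{rth} to get $\alpha \in \Aut(\mathbb{X})_{\im(p)}$, and set $q = (p \cup \{(a,u)\})\circ\alpha$. The only differences are notational (your $u$ and $b = (u)\alpha$ are the paper's $b$ and $(b)\alpha$) and that you front-load the reduction explaining why producing $b$ with $(b)g = (a)f^{-1}h$ suffices, which the paper verifies at the end.
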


\begin{proof}
 Since \(\mathbb{X}\) is
    homogeneous, there exists
    \(b \in X \setminus \im(p)\) such that \(p \cup \{(a, b)\} \) is an
    isomorphism. Since \(p \in S_{f, g, h}\), we have that \(pg \cup \{(a, (a)f^{-1}h)\} \subseteq pg \cup f^{-1}h\) is a homomorphism. Multiplying by the inverse of \(p \cup \{(a, b)\} \) on the left, we obtain the homomorphism \(g{\restriction_{\im(p)}}\cup \{(b, (a)f^{-1}h)\}\).
    Since $g$ is \textbf{rth}, 
    there is \(\alpha \in \Aut(\mathbb{X})_{\im(p)}\) so that  \((b)\alpha g = (a)f^{-1}h\).
    We define \(q := (p \cup \{(a, b)\}) \circ \alpha\). By definition, $q$ is also an isomorphism.
    If $x\in \dom(fqg)$, then 
    \[(x)fqg
    = \begin{cases}
      (x)fp\alpha g = (x)fpg = (x)h & \text{if } (x)f\in\dom(p) \\
      (b)\alpha g = (a)f ^ {-1}h = (x)h & \text{if } (x)f = a
    \end{cases}
    \]
    and so \(fqg  \subseteq h\).  Finally, 
    \(qg \cup f^{-1}h = pg \cup f^{-1}h\), and
    therefore, \(q \in S_{f, g, h}\), \(p \subseteq q\), and \(a \in \dom(q)\), as required.
\end{proof}

\begin{de}\label{prop-structures-iii}
We say that $f\in \End(\mathbb{X})$ is \textbf{fo} if $f$ is an embedding and 
for all finite $Y\subseteq X \setminus \im(f)$ there exists finite $W_Y\subseteq \im(f)$ such that the following holds. If
$t:\im(f)\cup Y\to X$ is a function such that $t{\restriction_{\im(f)}}$ and $t{\restriction_{(Y\cup
W_Y)}}$ are homomorphisms, then $t$ is a homomorphism.
\end{de}

\begin{lem}\label{lem-turbo-flex}
  Let $f, g\in \End(\mathbb{X})$. 
  If $f$ is \textbf{fo} and $g$ is \textbf{rth}, 
  then $S_{f, g, h}$ is a forth system for all $h\in \End(\mathbb{X})$.
\end{lem}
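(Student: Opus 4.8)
The plan is to verify the \textbf{Forth} property of $S_{f,g,h}$ directly: given $p\in S_{f,g,h}$ and $a\in X\setminus\dom(p)$, I would produce $q\in S_{f,g,h}$ with $p\subseteq q$ and $a\in\dom(q)$, splitting into two cases according to whether or not $a\in\im(f)$. If $a\in\im(f)\setminus\dom(p)$ there is nothing new to prove, since this is exactly the situation handled by Lemma~\ref{lem-flex} (which uses that $g$ is \textbf{rth}). So the genuine content lies in the case $a\in X\setminus\im(f)$.

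For $a\notin\im(f)$ the first observation is that adding $a$ to the domain is invisible to the condition $fqg\subseteq h$: since $f$ is injective and $a\notin\im(f)$, no element of $X$ is mapped by $f$ into the newly added part of the domain, so $fqg=fpg\subseteq h$ holds automatically for any choice of $b:=(a)q$. Thus the only real constraints are that $p\cup\{(a,b)\}$ stay a partial isomorphism and that $t:=qg\cup f^{-1}h$ stay a partial homomorphism. Crucially, the domain of $t$ is exactly $\im(f)\cup Y$, where $Y:=(\dom(p)\setminus\im(f))\cup\{a\}$ is a finite subset of $X\setminus\im(f)$; this is precisely the shape of function to which property \textbf{fo} applies.

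Concretely, I would let $W_Y\subseteq\im(f)$ be the finite set furnished by \textbf{fo} for this $Y$, and then apply Lemma~\ref{lem-flex} repeatedly to extend $p$ to some $p'\in S_{f,g,h}$ with $W_Y\subseteq\dom(p')$; since Lemma~\ref{lem-flex} only adds points of $\im(f)$ to the domain, this leaves $Y$ and $W_Y$ unchanged and keeps $a$ out of $\dom(p')$. By homogeneity of $\mathbb{X}$, I would then pick $b\notin\im(p')$ so that $q:=p'\cup\{(a,b)\}$ is a partial isomorphism; note that $q{\restriction_{Z\cup\{a\}}}$ is then also a partial isomorphism, where $Z:=(\dom(p')\setminus\im(f))\cup W_Y\subseteq\dom(p')$. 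To conclude $q\in S_{f,g,h}$, the condition $fqg\subseteq h$ is automatic as above, and for $t=qg\cup f^{-1}h$ I invoke \textbf{fo}: it suffices that $t{\restriction_{\im(f)}}$ and $t{\restriction_{Y\cup W_Y}}$ be homomorphisms. The former equals $f^{-1}h$, a composite of the isomorphism $f^{-1}$ with the endomorphism $h$, hence a homomorphism; the latter equals $(q{\restriction_{Z\cup\{a\}}})\,g$, a composite of a partial isomorphism with the homomorphism $g$, hence again a homomorphism. Along the way I would record the routine checks that $t$ is a well-defined partial function and that $p'g$ and $f^{-1}h$ agree on the overlap $\dom(p')\cap\im(f)$, both of which follow from $p'\in S_{f,g,h}$.

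The crux is the case $a\notin\im(f)$, and the main obstacle there is that the requirement ``$qg\cup f^{-1}h$ is a homomorphism'' is a priori a condition on infinitely many tuples relating $a$ to arbitrarily distant points of $\im(f)$. Property \textbf{fo} is exactly the device that localises this to the finite set $W_Y$; the two supporting moves—pre-enlarging $p$ so that $W_Y$ lies in its domain, and choosing $b$ by homogeneity so that the relevant restriction of $q$ is a partial isomorphism—then collapse the entire homomorphism check to the triviality that a partial isomorphism followed by a homomorphism is a homomorphism.
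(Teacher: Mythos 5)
Your proposal is correct and follows essentially the same route as the paper's proof: reduce to the case $a\notin\im(f)$ via Lemma~\ref{lem-flex}, pre-extend $p$ (again via Lemma~\ref{lem-flex}) so that $W_Y\subseteq\dom(p)\cap\im(f)$, add $(a,b)$ by homogeneity, note $fqg=fpg\subseteq h$ is automatic, and invoke \textbf{fo} with $t{\restriction_{\im(f)}}=f^{-1}h$ and $t{\restriction_{Y\cup W_Y}}\subseteq qg$ both homomorphisms. The only differences are presentational (you make explicit the well-definedness of $t$ and the role of injectivity of $f$, which the paper leaves implicit).
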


\begin{proof}
Let $p\in S_{f, g, h}$ be arbitrary and let $a\in X \setminus \dom(p)$.
    By Lemma \ref{lem-flex}, it suffices to consider the case when $a\in X \setminus \left(\im(f)\cup \dom(p)\right)$.
    If $b\in W_{\{a\} \cup \dom(p) \setminus \im(f)}$  and $b\not\in \dom(p)$, then 
    by Lemma \ref{lem-flex}, there exists $p'\in S_{f, g, h}$ such that $p\subseteq p'$ and $b\in \dom(p')$. Note that $W_{\{a\} \cup \dom(p) \setminus \im(f)} = W_{\{a\} \cup \dom(p') \setminus \im(f)}$. In this way, by repeatedly extending $p$, 
    we may assume without loss of generality that 
    \begin{equation}\label{WAdomp}
     W_{\{a\} \cup  \dom(p) \setminus \im(f)} \subseteq \dom(p) \cap \im(f)
    \end{equation}

Since \(\mathbb{X}\) is homogeneous,
    there is \(b \in X\) such that the map \(q := p \cup \{(a, b)\} \) is an
    isomorphism. We will show that 
 \(q \in S_{f, g, h}\) and so $S_{f, g, h}$ is a forth system. 
    It follows immediately from the definition that \( fqg=fpg\subseteq h\).
    
    It remains to prove that $t:=qg \cup f ^ {-1}h$ is a homomorphism. 
    Since $f$ is \textbf{fo}, it suffices to show that $Y := \dom(t)\setminus\im(f) \subseteq X \setminus \im(f)$ has the property that $t{\restriction_{\im(f)}}$ and $t{\restriction_{Y\cup W_Y}}$ are homomorphisms. The set $Y$ is finite because $t$ is a finite extension of $f ^ {-1}h$. 
    Since \( t{\restriction_{\im(f)}} = f^{-1}h\), it follows that $t{\restriction_{\im(f)}}$  is a homomorphism.
    
    Clearly, $Y = \dom(t)\setminus \im(f) = \dom(q)\setminus\im(f) \subseteq \dom(q) = \dom(qg)$.
    Similarly, $\dom(t) = \dom(q) \cup \im(f) = \{a\} \cup \dom(p) \cup \im(f)$ and so 
    $Y = \{a\} \cup \dom(p)\setminus \im(f)$. In particular, $W_Y =  W_{\{a\} \cup  \dom(p) \setminus \im(f)} \subseteq \dom(p) \subseteq \dom(q)= \dom(qg)$ by \eqref{WAdomp}.
    Thus \(t{\restriction_{W_Y \cup Y}} \subseteq qg\)
    is a homomorphism, as required.
\end{proof}

\begin{de}\label{prop-structures-ii}
We say that $f\in \Emb(\mathbb{X})$ is \textbf{back} if for every finite
       substructure \(\mathbb{T}\) of \(\mathbb{X}\) there exists a finite substructure \(\mathbb{U}\) of
      \(\mathbb{X}\) such that \(T \subseteq U\) and for all \(v\in X \backslash U\) the following hold:
      \begin{enumerate}[\rm (i)]
          \item 
           there exists \(\alpha \in \Aut(\mathbb{X})_U\) such that
      \((v)\alpha \notin \im(f)\)
      \item 
      if \(t \colon X \to X\) is a finite partial map such that 
      \((U\cup \{v\})\alpha
      \subseteq \dom(t) \subseteq (U\cup \{v\})\alpha \cup \im(f)\),
      and \(t\) restricted to the sets \(\dom(t)\setminus\{(v)\alpha\}\) and
      \((U\cup \{v\})\alpha\) are homomorphisms,
      then \(t\) is a homomorphism.
      \end{enumerate}
\end{de}

\begin{lem}\label{lem-flux}
If $f\in \End(\mathbb{X})$ is \textbf{back} and \textbf{fo}, and $g\in \End(\mathbb{X})$ is \textbf{rth}, then $S_{f, g, h}$ is a back and forth system for all $h\in \End(\mathbb{X})$.  
\end{lem}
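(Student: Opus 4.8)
The plan is to observe that the \textbf{Forth} half of the statement is already in hand: since $f$ is \textbf{fo} and $g$ is \textbf{rth}, Lemma~\ref{lem-turbo-flex} shows that $S_{f,g,h}$ is a forth system. It therefore remains only to verify the \textbf{Back} property, and this is where the hypothesis that $f$ is \textbf{back} enters. So I would fix $p\in S_{f,g,h}$ and $b\in X\setminus\im(p)$ and aim to produce $q\in S_{f,g,h}$ with $p\subseteq q$ and $b\in\im(q)$.

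First I would apply the \textbf{back} property to the finite substructure $\mathbb{T}$ induced on $\dom(p)$, obtaining the finite $\mathbb{U}\supseteq\mathbb{T}$ it guarantees. The idea is to adjoin a preimage for $b$ lying outside $\im(f)$, which is exactly what condition (i) of \textbf{back} supplies, and then to push the homomorphism condition through condition (ii). For (ii) to apply, the domain of the map it is fed must sit tightly between $(U\cup\{v\})\alpha$ and $(U\cup\{v\})\alpha\cup\im(f)$; tracing this constraint forces me to work with a partial isomorphism whose domain is \emph{exactly} $U$. I would achieve this by first using the forth system of Lemma~\ref{lem-turbo-flex} to extend $p$ to some $p_1\in S_{f,g,h}$ with $U\subseteq\dom(p_1)$, and then setting $p':=p_1{\restriction_U}$; since $S_{f,g,h}$ is closed under restriction, $p'\in S_{f,g,h}$, while $p\subseteq p'$ and $\dom(p')=U$. (If $b$ enters the image at any stage of this extension we are already done, so I may assume $b\notin\im(p')$.)

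Next I would locate the preimage: by homogeneity of $\mathbb{X}$ the finite partial isomorphism $p'^{-1}$ extends to send $b$ to some $v$, giving an isomorphism $p'\cup\{(v,b)\}$ with $v\notin\dom(p')=U$. Applying \textbf{back}(i) to $v$ yields $\alpha\in\Aut(\mathbb{X})_U$ with $a:=(v)\alpha\notin\im(f)$; because $\alpha$ fixes $U=\dom(p')$ pointwise, $v$ and $a$ realise the same type over $\dom(p')$, so $q:=p'\cup\{(a,b)\}$ is again an isomorphism and $a\notin U$. To see $q\in S_{f,g,h}$, note that $a\notin\im(f)$ gives $fqg=fp'g\subseteq h$ for free, and for the homomorphism condition I would apply \textbf{back}(ii) to $t:=qg\cup f^{-1}h$: here $\dom(t)=(U\cup\{a\})\cup\im(f)=(U\cup\{v\})\alpha\cup\im(f)$ meets the domain requirement, the restriction $t{\restriction_{\dom(t)\setminus\{a\}}}=p'g\cup f^{-1}h$ is a homomorphism since $p'\in S_{f,g,h}$, and $t{\restriction_{(U\cup\{v\})\alpha}}=qg$ is a homomorphism as the composite of the isomorphism $q$ with the endomorphism $g$. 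Condition (ii) then yields that $t$ is a homomorphism, so $q\in S_{f,g,h}$ with $b\in\im(q)$, establishing \textbf{Back}.

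The delicate point, which I expect to be the main obstacle, is the tension in the domain of $p'$: condition (ii) only tolerates domains contained in $(U\cup\{v\})\alpha\cup\im(f)$, while the use of $\alpha$ to preserve the isomorphism survives only if $\alpha$ fixes \emph{all} of $\dom(p')$, yet $\alpha$ is guaranteed to fix merely $U$. Together these pin $\dom(p')$ down to exactly $U$, whereas a direct forth-extension of $p$ to domain $U$ overshoots by adjoining auxiliary points of $\im(f)$ arising from the \textbf{fo} witnesses $W_Y$. The clean resolution I would emphasise is to over-extend and then restrict back to $U$, exploiting the closure of $S_{f,g,h}$ under restriction.
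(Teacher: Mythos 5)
Your proposal is correct and follows essentially the same route as the paper's proof: reduce to the back property via Lemma~\ref{lem-turbo-flex}, extend $p$ by forth and then restrict (using closure of $S_{f,g,h}$ under restrictions) so that its domain is exactly $U$, use homogeneity to find $v$, apply \textbf{back}(i) to replace $v$ by a point outside $\im(f)$, and verify membership in $S_{f,g,h}$ via \textbf{back}(ii). The one (easily repaired) slip is that \textbf{back}(ii) is stated only for \emph{finite} partial maps, so it cannot be fed $t=qg\cup f^{-1}h$ directly; the paper instead applies it to $qg\cup\{(z,(z)f^{-1}h)\colon z\in Z\}$ for each finite $Z\subseteq\im(f)$, which suffices because being a homomorphism between relational structures is a finitary condition.
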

\begin{proof}
    By Lemma \ref{lem-turbo-flex}, we  need only  show that $S_{f,g,h}$ is a back system. Let $p\in S_{f,g,h}$ and \(b \in X \setminus \im(p)\) be arbitrary. Let
    \(\mathbb{T}\) be the induced substructure of \(\mathbb{X}\) such that \(T
    = \dom(p)\), and let \(\mathbb{U}\) be as in~Definition~\ref{prop-structures-ii}.
    Since $S_{f,g,h}$ is a forth system, as shown in Lemma \ref{lem-turbo-flex}, we may extend \(p\) to \(p' \in S_{f, g, h}\) such that \(U \subseteq
    \dom(p')\). Moreover, since \(S_{f, g, h}\) is closed under restrictions, there is \(p'' \in S_{f, g, h}\) extending \(p\) with \(U = \dom(p'')\). 
    Hence we may assume without loss of generality that  \(U = \dom(p)\). 
        Since \(\mathbb{X}\) is homogeneous,
    there exists \(v \in X\) such that \(p \cup \{ (v,b)\}\) is an
    isomorphism.  Since $f$ is \textbf{back}, there exists \(\alpha \in \Aut(\mathbb{X})\) as in Definition~\ref{prop-structures-ii}(i).
    Define a partial isomorphism \(q := \alpha^{-1} \circ (p \cup \{ (v,
    b)\})\). Since \( (v)\alpha \notin \im(f)\), it follows that \(fpg=fqg \subseteq h\).

    To conclude the proof we will show that
     \(t = qg \cup
    \{ (z, (z)f^{-1}h) \colon z\in Z\}\) is a homomorphism for every finite subset $Z$ of $\im(f)$.
    Since $fqg \subseteq h$, it follows that \(t\) is a
    well-defined partial map. Since $f$ is \textbf{back}, it suffices, by Definition~\ref{prop-structures-ii}(ii), to show that 
    $t|_{\dom(t)\setminus (v)\alpha}$ and $t|_{(U \cup \{v\})\alpha}$ are homomorphisms.
    By definition \( (U\cup \{v\})\alpha = \dom(q)\) and so \( (U\cup \{v\})\alpha \subseteq
    \dom(t) \subseteq (U \cup \{v\})\alpha \cup \im(f)\).
    Hence
    \(t{\restriction_{(U \cup \{v\})\alpha}} = qg\) and the latter is clearly a homomorphism. Finally, 
    \[
      t{\restriction_{\dom(t) \setminus \{(v)\alpha\}}} = pg \cup
    \{ (z, (z)f^{-1}h) \colon z\in Z\}
    \]
    is a homomorphism since \(p \in S_{f,g,h}\). Hence \(t\) is
    a homomorphism, as required. 
\end{proof}

Finally, we state the main result of the section.

\begin{theorem}\label{prop-structures-property-x}
  Let $\mathbb{X}$ be a homogeneous relational structure and let 
  $f, g\in \End(\mathbb{X})$.
  If
  $f$ is \textbf{back} and \textbf{fo}, and $g$ is \textbf{rth},
  then \(\End(\mathbb{X})\) equipped with the pointwise topology has property
  \textbf{X} with respect to \(\Aut(\mathbb{X})\).
\end{theorem}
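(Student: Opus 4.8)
The plan is to follow the template of Proposition~\ref{prop-xxx}: the witnesses $f_s$ and $g_s$ in the definition of property~\textbf{X} will be taken to be the fixed endomorphisms $f$ and $g$, independently of $s$, while the witness $t_s\in\Aut(\mathbb{X})$ will be produced, for each $s=h\in\End(\mathbb{X})$, by running the back and forth machinery of Lemmas~\ref{lem-flex}--\ref{lem-flux} on the system $S_{f,g,h}$. First I would record that the empty map lies in $S_{f,g,h}$: indeed $f\varnothing g=\varnothing\subseteq h$, and $\varnothing g\cup f^{-1}h=f^{-1}h$ is a homomorphism because $f^{-1}$ is an isomorphism from the induced structure on $\im(f)$ onto its image in $\mathbb{X}$ and $h$ is a homomorphism. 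Since $f$ is \textbf{back} and \textbf{fo} and $g$ is \textbf{rth}, Lemma~\ref{lem-flux} shows $S_{f,g,h}$ is a back and forth system, so Lemma~\ref{lem:baf-iso} extends $\varnothing$ to an isomorphism $t_h\in\Aut(\mathbb{X})$. As every member of the resulting chain lies in $S_{f,g,h}$ we have $f t_h g\subseteq h$, and since both sides are total this gives the factorisation $s=ft_sg$ with $t_s:=t_h$.

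The substance is the neighbourhood condition, and the heart of the argument is to \emph{re-seed} the back and forth with a perturbation of $h$. Fix $h$ and its automorphism $t_h$, and let $B$ be a basic neighbourhood of $t_h$, determined by agreement with $t_h$ on a finite set $F\subseteq X$. I would then produce a finite set $F'\subseteq X$, depending only on $F$ (and on $f,t_h$), such that every $k\in\End(\mathbb{X})$ agreeing with $h$ on $F'$ lies in $f(B\cap\Aut(\mathbb{X}))g$; this exhibits $f(B\cap\Aut(\mathbb{X}))g$ as a neighbourhood of $h$. The key claim is that for such a $k$ the restriction $t_h{\restriction_F}$ already lies in $S_{f,g,k}$, after which Lemmas~\ref{lem-flux} and~\ref{lem:baf-iso} extend $t_h{\restriction_F}$ to an automorphism $\tau\supseteq t_h{\restriction_F}$ with $f\tau g=k$; then $\tau{\restriction_F}=t_h{\restriction_F}$, so $\tau\in B\cap\Aut(\mathbb{X})$ and $k=f\tau g$, as wanted.

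The main obstacle, and where \textbf{fo} is essential, is verifying $t_h{\restriction_F}\in S_{f,g,k}$ \emph{uniformly in $k$}. The condition $f(t_h{\restriction_F})g\subseteq k$ is cheap: whenever $(x)f\in F$ one has $(x)f(t_h{\restriction_F})g=(x)ft_hg=(x)h$, which equals $(x)k$ once we put $(F\cap\im(f))f^{-1}$ into $F'$. The delicate point is that $(t_h{\restriction_F})g\cup f^{-1}k$ must be a homomorphism on all of $F\cup\im(f)$, an infinite set on which $k$ is essentially unconstrained. Here I would apply \textbf{fo} with $Y:=F\setminus\im(f)$ to obtain the finite set $W_Y\subseteq\im(f)$: \textbf{fo} reduces checking that $(t_h{\restriction_F})g\cup f^{-1}k$ is a homomorphism to checking its restriction to $\im(f)$ (automatic, since $f^{-1}k$ is always a homomorphism there) and its restriction to $Y\cup W_Y$. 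Provided $(W_Y)f^{-1}\subseteq F'$, the map $f^{-1}k$ agrees with $f^{-1}h$ on $W_Y$, so on $Y\cup W_Y$ the map $(t_h{\restriction_F})g\cup f^{-1}k$ coincides with the restriction to $Y\cup W_Y$ of the genuine homomorphism $t_hg$ (using that $f^{-1}h=(t_hg){\restriction_{\im(f)}}$, which follows from $ft_hg=h$), and is therefore a homomorphism.

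Since $W_Y$ depends only on $Y$, and hence only on $F$, the set $F':=(F\cap\im(f))f^{-1}\cup(W_Y)f^{-1}$ is finite and independent of $k$, and the two bullet points above are exactly the two requirements defining membership in $S_{f,g,k}$. Thus $t_h{\restriction_F}\in S_{f,g,k}$ for every $k$ agreeing with $h$ on $F'$, which by the preceding paragraph yields $k\in f(B\cap\Aut(\mathbb{X}))g$ and completes the proof of property~\textbf{X}. I expect the genuine difficulty to be entirely concentrated in the homomorphism-check of the previous paragraph, i.e.\ in seeing that \textbf{fo} localises the verification to a finite subset $W_Y$ of $\im(f)$ that can be fixed before $k$ is chosen; the existence of $t_h$ itself and the extension of $t_h{\restriction_F}$ to $\tau$ are routine applications of the already-established back and forth lemmas.
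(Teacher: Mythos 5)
Your proposal is correct and follows essentially the same route as the paper: the same witnesses $f_s=f$, $g_s=g$, the factorisation via Lemma~\ref{lem:baf-iso} applied to $\varnothing\in S_{f,g,s}$, the same neighbourhood set (your $F'$ equals the paper's $(H)f^{-1}$ with $H=F\cup W_{F\setminus\im(f)}$), and the same use of \textbf{fo} to verify that a restriction of $t_s$ lies in $S_{f,g,k}$ uniformly in $k$. The only (cosmetic) difference is that the paper seeds the back-and-forth with $t_s{\restriction_{F\cup W_Y}}$, so that the restriction of $pg\cup f^{-1}k$ to $Y\cup W_Y$ is literally contained in $pg$, whereas you seed with $t_s{\restriction_F}$ and instead observe that this restriction coincides with the homomorphism $t_sg$ on $Y\cup W_Y$; both verifications are valid.
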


\begin{proof}
  Let \(s \in \End(\mathbb{X})\) be fixed. By Lemma \ref{lem-flux} and the assumptions of the theorem, $S_{f, g, s}$ is a back and forth system.
  It follows from Lemma~\ref{lem:baf-iso} applied to \(\varnothing \in S_{f, g, s}\) that there is \(\alpha_s \in \Aut(\mathbb{X})\) such that
  \(s = f \alpha_s g\). Let $\mathcal{B}$ be the basis for the pointwise topology consisting of finite intersections of subbasic open sets $\set{f\in \End(\mathbb{X})}{(x)f=y}$ for any $x,y\in \mathbb{X}$. If \(B \in \mathcal{B}\) contains \(\alpha_s\), then there is a finite \(F \subseteq X\) such that \(B =
  \{h \in \End(\mathbb{X}) \colon h{\restriction_F} = \alpha_s{\restriction_F}\}\).
  Since $f$ is \textbf{fo}, there exists $W_{F \setminus \im(f)} \subseteq \im(f)$ as given in Definition \ref{prop-structures-iii}. Let
  \(
    H = F \cup W_{F \setminus \im(f)}.
  \)
  It suffices to prove that \( f (B \cap \Aut(\mathbb{X})) g\) is a
  neighbourhood of \(s\). In particular, we will show that 
  \begin{equation}\label{mystery-claim}
    \{h \in \End(\mathbb{X}) \colon h{\restriction_{(H)f^{-1}}} =
    s{\restriction_{(H)f^{-1}}}\} \subseteq f (B \cap \Aut(\mathbb{X})) g.
  \end{equation}
  Let \(h \in \End(\mathbb{X})\) be such that \(h{\restriction_{(H)f^{-1}}} =
  s{\restriction_{(H)f^{-1}}}\) and let \(p = \alpha_s{\restriction_H}\). 
  If $x\in (H)f ^ {-1}$, then $(x)f \in H$ and so $(x)f\alpha_s = (x)fp$. 
  Since \(s = f\alpha_sg\), it follows that 
  \(h{\restriction_{(H)f^{-1}}} = s{\restriction_{(H)f^{-1}}} = fpg\). Since $S_{f, g, h}$ is a back and forth system, it follows from Lemma~\ref{lem:baf-iso} that if
  \(p \in S_{f, g, h}\), then \(p\) can be extended to \(\alpha_h \in \Aut(\mathbb{X}) \cap
  B\) such that \(h = f \alpha_h g \in f (\Aut(\mathbb{X}) \cap B) g\), proving~\eqref{mystery-claim}. It remains to show that \(p \in S_{f, g, h}\).  We have already shown that \(fpg \subseteq h\), and so the proof is concluded by showing that $t := pg \cup f^{-1}h$ is a homomorphism.

  First, \(t\) is well-defined, since \(f\alpha_sg = s\), and so the three functions \(pg\), $\alpha_sg = f^{-1}s$, and $f^{-1}h$, all coincide on
  \(\dom(pg) \cap \im(f) = H \cap \im(f)\). 
  Since
  $
     \dom(t) \setminus \im(f) = H \setminus \im(f) = F \setminus \im(f),
  $
  it follows that \( W_{\dom(t) \setminus \im(f)} = W_{F \setminus \im(f)} \subseteq H \subseteq \dom(t)\).
  Finally, \(t{\restriction_{ \im(f)}} = f^{-1}h\) and
  $
    t{\restriction_{W_{\dom(t) \setminus \im(f)} \cup (\dom(t) \setminus
    \im(f))}} \subseteq t{\restriction_H} = pg
  $
  are both homomorphisms. Therefore, since \(f\) is \textbf{fo} (Definition~\ref{prop-structures-iii}), \(t\) is a homomorphism
  and so \(p \in S_{f, g, h}\), as required. 
\end{proof}

\subsection{Strong amalgamation property with homomorphism gluing}
Next, we introduce a strengthening of the classical strong amalgamation property called \emph{the strong amalgamation property with homomorphism gluing}.
This property can be thought of as an ``almost free amalgamation property''.  

\begin{de} \label{definition-homo-glue}
  Let \(\sigma\) be a finite relational signature. Then  a class of  finite
  \(\sigma\)-structures \(\mathcal{C}\) has the \emph{strong amalgamation property
  with homomorphism gluing} if for any \(\mathbb{A}, \mathbb{B}, \mathbb{C}
  \in \mathcal{C}\) and  any embeddings \(e_1 \colon \mathbb{A} \to
  \mathbb{B}\) and \(e_2 \colon \mathbb{A} \to \mathbb{C}\), there exist
  \(\mathbb{D} \in \mathcal{C}\) and embeddings \(f_1 \colon \mathbb{B} \to
  \mathbb{D}\) and \(f_2 \colon \mathbb{C} \to \mathbb{D}\) such that:
  \begin{enumerate}[\rm (i)]
    \item \(e_1 f_1 = e_2 f_2\) and \(\im(f_1)\cap \im(f_2) =
      (A)e_1f_1 = (A)e_2f_2\); 

    \item if \(g \colon D \to E\) is a partial map for some
      \(\mathbb{E} \in \mathcal{C}\) so that \((A)e_1f_1 =(A)e_2f_2 \subseteq
      \dom(g)\) and the maps \(f_1 g\) and \(f_2 g\) are homomorphisms, then
      \(g\) is also a homomorphism.
  \end{enumerate}
\end{de}

Interestingly, and unbeknownst to us at the time of our research, the
similar notion of \emph{canonical amalgamation} has recently been
introduced in~\cite{paolini2018strong} in the context of the
reconstruction of the action of automorphism groups from their
abstract group structure. Roughly speaking, the canonical amalgamation property
stipulates the existence of a strong amalgam in which any two tuples
which would be belong to the same orbit in the free amalgam (which
might not be a member of the class under consideration) also belong to
the same orbit in the canonical amalgam. The strong amalgamation
property with homomorphism gluing implies the canonical amalgamation
property, and stipulates the existence of a strong (and in fact
canonical) amalgam in which all positive facts are implied by the
positive facts of the free amalgam plus the membership in the class
under consideration. We remark that the obvious ``first-order'' analogue
of homomorphism gluing where one would replace homomorphisms by
embeddings would not yield a fruitful notion as it would imply a
unique strong amalgam.

Examples of structures whose age has the canonical amalgamation property
but not the strong amalgamation property with homomorphism gluing are
the universal homogeneous tournament, the dual of the universal
homogeneous triangle-free graph, and the usual linear order on the rational
numbers. Clearly, the free amalgamation property implies the strong
amalgamation property with homomorphism gluing. The age of the random strict partial
order does not have free amalgamation, but does have the strong
amalgamation property with homomorphism gluing, as shown in the next example.

\begin{ex}\label{example-P-has-saphg}
  We will show that the age of the random strict partial order has the strong amalgamation property with homomorphism gluing. Suppose that \(\mathbb{A}, \mathbb{B}, \mathbb{C}\) are finite
  partial orders and \(e_1 \colon \mathbb{A} \to \mathbb{B}\) and \(e_2 \colon \mathbb{A} \to \mathbb{C}\) are embeddings. It suffices to consider the case when \(e_1\) and \(e_2\) are both identity maps and \(B \cap C = A\). Let \(<\) be a binary relation on \(B\cup C\) given by \(x < y\) if either \(x, y \in B\) and \(x <^{\mathbb{B}} y\); or \(x, y \in C\) and \(x <^{\mathbb{C}} y\). Note that \(<\) has no cycles, and so the transitive closure of \(<\) is a partial order. Let
  \(\mathbb{D}\) be a finite partial order with domain \(B \cup C\) and let
  \(<^{\mathbb{D}}\) be the transitive closure of \(<\). If \(f_1 \colon B \to D\) and \(f_2 \colon C \to D\) are the identity maps, then (i) of Definition~\ref{definition-homo-glue} holds.
  
  Suppose that \(g \colon D \to E\) is a map as in the hypothesis of Definition~\ref{definition-homo-glue}(ii). Let \(x, y \in D\) be such that \(x <^{\mathbb{D}} y\). If both \(x\) and \(y\) are in either \(B\) or \(C\), then \((x)g <^{\mathbb{E}} (y)g\) since \(g\) is a homomorphism when restricted to either \(B\) or \(C\). 
  Suppose that \(x \in B\) and \(y \in C\setminus B\), the other cases are similar, and so will be omitted. Since \(<^{\mathbb{D}}\) is the transitive closure of \(<\), it follows that there is \(z \in A\) such that \(x < z < y\).
  Therefore,  \(x <^{\mathbb{D}} z <^{\mathbb{D}} y\), and so  \((x)g <^{\mathbb{E}} (z)g <^{\mathbb{E}} (y)g\), proving that \(g\) is a homomorphism.
  
  A similar argument shows that the random reflexive partial order also has the strong amalgamation property with homomorphism gluing.
\end{ex}

Next, we show that one of the structures in the definition of the strong amalgamation property with homomorphism gluing can be allowed to be countably infinite. 

\begin{lem}\label{lem-homo-glue}
Let $\sigma$ be a relational signature and 
  let \(\mathcal{C}\) be a
  Fra\"{i}ss\'e class of \(\sigma\)-structures which has the strong amalgamation
  property with homomorphism gluing whose Fra\"{i}ss\'{e} limit \(\mathbb{X}\) is $\omega$-categorical.
  If \(\mathbb{A}, \mathbb{B} \in \mathcal{C}\), \(\mathbb{C}\) is a countable
  \(\sigma\)-structure which embeds into \(\mathbb{X}\), and if \(e_1 \colon
  \mathbb{A} \to \mathbb{B}\) and \(e_2 \colon \mathbb{A} \to \mathbb{C}\) are
  embeddings, then there exist \(\mathbb{D}\), which embeds into \(\mathbb{X}\), and embeddings
  \(f_1 \colon \mathbb{B} \to \mathbb{D}\) and \(f_2 \colon \mathbb{C} \to
  \mathbb{D}\) such that:
  \begin{enumerate}[\rm (i)]
    \item\label{lem-homo-glue-ii} \(e_1 f_1 = e_2 f_2\), \(\im(f_1)\cup \im(f_2)=D\) and \(\im(f_1)\cap \im(f_2) =
      (A)e_1f_1 = (A)e_2f_2\);

    \item\label{lem-homo-glue-iii} if \(g \colon D \to E\) is
      a partial map for some \(\sigma\)-structure \(\mathbb{E}\) which embeds
      into \(\mathbb{X}\) so that \((A)e_1f_1 =(A)e_2f_2 \subseteq \dom(g)\) and the
      maps \(f_1 g\) and \(f_2 g\) are
      homomorphisms, then \(g\) is also a homomorphism.
  \end{enumerate}
\end{lem}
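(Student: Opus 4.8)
The plan is to build the amalgam $\mathbb{D}$ abstractly as a countable structure all of whose finite substructures lie in $\mathcal{C}$, and then to embed it into $\mathbb{X}$ using the universality of the Fra\"{i}ss\'e limit (every countable structure with age contained in $\mathcal{C}$ embeds into $\mathbb{X}$). After the usual reduction I would assume that $e_1$ and $e_2$ are inclusions, that $A = B \cap C$, and that the desired maps $f_1, f_2$ are the inclusions of $B$ and $C$ into $D := B \cup C$; this secures the set-theoretic assertions of \ref{lem-homo-glue-ii} (namely $e_1 f_1 = e_2 f_2$, $\im(f_1) \cup \im(f_2) = D$, and $\im(f_1) \cap \im(f_2) = A$) irrespective of how the relations on $\mathbb{D}$ are chosen. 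It then remains to equip $D$ with relations extending those of $\mathbb{B}$ and $\mathbb{C}$ so that (a) $\Age(\mathbb{D}) \subseteq \mathcal{C}$ and (b) the homomorphism-gluing clause \ref{lem-homo-glue-iii} holds; once (a) holds, $\mathbb{D}$ embeds into $\mathbb{X}$, and composing with this embedding yields the required $\mathbb{D}, f_1, f_2$.

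To define the relations I would invoke the finite strong amalgamation property with homomorphism gluing (Definition~\ref{definition-homo-glue}) on every finite piece. Exhaust $\mathbb{C}$ by an increasing chain of finite substructures $\mathbb{C}_0 \subseteq \mathbb{C}_1 \subseteq \cdots$ with $A \subseteq C_0$, and for finite $S \subseteq D$ with $A \subseteq S$ write $S_B = S \cap B$ and $S_C = S \cap C$. Definition~\ref{definition-homo-glue} applied to $\mathbb{A}, \mathbb{B}{\restriction_{S_B}}, \mathbb{C}{\restriction_{S_C}}$ produces a strong amalgam in $\mathcal{C}$ with the gluing property; as the discussion of \emph{canonical amalgamation} after Definition~\ref{definition-homo-glue} explains, this amalgam may be taken \emph{canonical}, i.e.\ its positive facts are exactly those forced by the positive facts of $\mathbb{B}{\restriction_{S_B}}$ and $\mathbb{C}{\restriction_{S_C}}$ together with membership in $\mathcal{C}$. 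I would declare a tuple of $D$ related in $\mathbb{D}$ exactly when the canonical amalgam of some finite piece containing it does. The decisive point is \emph{coherence}: that the canonical amalgam of a finite piece restricts to the canonical amalgam of any smaller piece, so that the prescription is well defined. This is inherited from the ``all positive facts forced'' characterisation, which passes to induced substructures. Granting coherence, $\mathbb{D} = \bigcup_S \mathbb{D}{\restriction_S}$ is a well-defined structure whose every finite substructure is a canonical amalgam and hence lies in $\mathcal{C}$; thus $\Age(\mathbb{D}) \subseteq \mathcal{C}$ and $\mathbb{D}$ embeds into $\mathbb{X}$.

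For the gluing clause \ref{lem-homo-glue-iii} with $\mathbb{E}$ now countable I would reduce to the finite case. A partial map $g \colon D \to E$ is a homomorphism if and only if for every relation symbol $R$ and every tuple $\bar{d} \in R^{\mathbb{D}}$ contained in $\dom(g)$ one has $(\bar{d})g \in R^{\mathbb{E}}$. Given such $\bar{d}$, choose a finite $S$ with $A \subseteq S \subseteq \dom(g)$ containing the entries of $\bar{d}$. Then $\mathbb{D}{\restriction_S}$ is the finite canonical amalgam of $\mathbb{B}{\restriction_{S_B}}$ and $\mathbb{C}{\restriction_{S_C}}$ over $\mathbb{A}$; the substructure of $\mathbb{E}$ generated by $(S)g$ is finite and lies in $\mathcal{C}$ since $\mathbb{E}$ embeds into $\mathbb{X}$; the maps $f_1 g$ and $f_2 g$ restrict to homomorphisms on $S_B$ and $S_C$; and $A \subseteq \dom(g{\restriction_S})$. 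Hence Definition~\ref{definition-homo-glue}(ii), applied to the finite amalgam $\mathbb{D}{\restriction_S}$, shows that $g{\restriction_S}$ is a homomorphism, so $(\bar{d})g \in R^{\mathbb{E}}$. As $\bar{d}$ was arbitrary, $g$ is a homomorphism.

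The hard part is the coherence claim of the second paragraph: that the finite canonical amalgams fit together into a single structure. The cleanest justification is the uniqueness of the canonical amalgam via its ``forced positive facts'' description, which is exactly what makes the restriction to a sub-piece again canonical; verifying this uniqueness, and that Definition~\ref{definition-homo-glue} genuinely yields a canonical (not merely some) amalgam, is the crux on which the whole argument rests. A more hands-on alternative is to build $\mathbb{D}$ as a direct limit, at stage $n$ amalgamating $\mathbb{D}_n$ with $\mathbb{C}_{n+1}$ over $\mathbb{C}_n$ via Definition~\ref{definition-homo-glue}; but then recovering the gluing property over the \emph{original} base $\mathbb{A}$ is awkward, since the hypothesis of \ref{lem-homo-glue-iii} forces only $A$, and not the larger $C_n$, into $\dom(g)$. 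For this reason I would favour the canonical-amalgam formulation.
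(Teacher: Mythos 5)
Your route is genuinely different from the paper's, and it is viable. The paper never builds an abstract amalgam: after the same WLOG reductions it works entirely inside \(\mathbb{X}\), applying Definition~\ref{definition-homo-glue} to \(A\), \(B\), \(C_k\) (for an exhaustion \(A=C_0\subseteq C_1\subseteq\cdots\) of \(C\)) to get self-embeddings \(h_k\in\Emb(\mathbb{X})\) fixing \(A\) pointwise with \(B\cup(C_k)h_k\) inducing the finite amalgam, and then uses \(\omega\)-categoricity, exactly as in Lemmas~\ref{lem-zariski-functions1} and~\ref{lem-zariski-functions2}, to pick \(\alpha_k\in\Aut(\mathbb{X})_B\) so that a subsequence of \((h_k\alpha_k)_{k\in\N}\) converges to an embedding \(f_2\); the structure induced on \(B\cup(C)f_2\) is \(\mathbb{D}\). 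Your construction replaces this compactness argument by an abstract coherent-union-of-finite-amalgams construction followed by universality of the Fra\"{i}ss\'{e} limit; a notable payoff is that it does not use the \(\omega\)-categoricity hypothesis at all. Your third paragraph (reducing clause (ii) for countable \(\mathbb{E}\) to the finite Definition~\ref{definition-homo-glue}(ii) by restricting to \(S=A\cup\{\text{entries of }\bar d\}\)) is essentially the verification that the paper compresses into ``the claimed properties~(i) and~(ii) are straightforward to verify'', and is needed in both proofs.

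There is, however, a gap exactly where you flag it: coherence is asserted, not proved, and the appeal to the discussion of canonical amalgamation following Definition~\ref{definition-homo-glue} cannot carry it --- that passage is an informal remark, not a lemma, and ``positive facts forced by membership in \(\mathcal{C}\)'' is never given a precise meaning there, let alone shown to characterise the amalgam of Definition~\ref{definition-homo-glue} or to pass to induced substructures. Fortunately the gap closes using only clause (ii) of Definition~\ref{definition-homo-glue}. \emph{Uniqueness:} if \(\mathbb{D}\) and \(\mathbb{D}'\) are two structures in \(\mathcal{C}\) on the same domain \(B\cup C\) (with \(B\cap C=A\)), both inducing \(\mathbb{B}\) and \(\mathbb{C}\) and both satisfying the gluing clause, then the identity map \(D\to D'\) has \(A\) in its domain and restricts to embeddings (hence homomorphisms) on \(B\) and on \(C\), so by the gluing clause of \(\mathbb{D}\) it is a homomorphism \(\mathbb{D}\to\mathbb{D}'\); symmetrically the identity is a homomorphism \(\mathbb{D}'\to\mathbb{D}\), whence \(R^{\mathbb{D}}=R^{\mathbb{D}'}\) for every \(R\). \emph{Coherence:} if \(\mathbb{D}\) is such an amalgam of \(\mathbb{B}{\restriction_{S'_B}}\) and \(\mathbb{C}{\restriction_{S'_C}}\) over \(\mathbb{A}\) and \(A\subseteq S\subseteq S'\), then \(\mathbb{D}{\restriction_S}\in\mathcal{C}\) by \textbf{HP}, it is a strong amalgam of \(\mathbb{B}{\restriction_{S_B}}\) and \(\mathbb{C}{\restriction_{S_C}}\) over \(\mathbb{A}\), and it satisfies the gluing clause because any partial map on \(S\) with \(A\) in its domain is a partial map on \(S'\) to which the gluing clause of \(\mathbb{D}\) applies; by uniqueness, \(\mathbb{D}{\restriction_S}\) therefore \emph{is} the amalgam of the two pieces. (One should also note, as you implicitly do, that any amalgam from Definition~\ref{definition-homo-glue} may be cut down to \(\im(f_1)\cup\im(f_2)\), since this restriction preserves both clauses.) With these two observations your definition of the relations on \(D\) is well posed, every finite substructure of \(\mathbb{D}\) lies in \(\mathcal{C}\), and the rest of your argument goes through.
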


\begin{proof}
  Let \(\mathbb{A}\), \(\mathbb{B}\), and \(\mathbb{C}\) be as in the
  hypothesis of the lemma. We may assume, for the ease of notation, that
  \(\mathbb{A}\) is a substructure of both \(\mathbb{B}\) and \(\mathbb{C}\), that $e_1, e_2$ are the identity function on $\mathbb A$,
  and also that both \(\mathbb{B}\) and \(\mathbb{C}\) are substructures of
  \(\mathbb{X}\). Let \(C_0 := A \subseteq C_1 \subseteq \cdots\) be finite
  sets such that \(\bigcup_{k = 0}^\infty C_k = C\), and for every \(k \geq 0\) let
  \(\mathbb{C}_k\) be the induced substructure of \(\mathbb{C}\) on \(C_k\).

Since \(\mathcal{C}\)
  has the strong amalgamation property with homomorphism gluing and by the homogeneity of $\mathbb X$, 
  if we apply Definition~\ref{definition-homo-glue} to $A$, $B$, and $C_k$ with the identity function from $A$ into both $B$  and $C_k$, 
  it follows that for every $k$
  there is a substructure \(\mathbb{D}_k\) (the amalgam of $B$ and $C_k$ from Definition~\ref{definition-homo-glue}) of \(\mathbb{X}\) and an embedding  $h_k\in\Emb(\mathbb X)$ 
  which fixes $A$
pointwise and such that $B\cup (C_k)h_k = D_k$. 

We use a similar argument to that used several times in Section~\ref{section-fraisse} (for example in the proofs of Lemmas~\ref{lem-zariski-functions1} and~
\ref{lem-zariski-functions2}). 
If $\alpha_k\in \Aut(\mathbb{X})_{B}$ is arbitrary, then $h_k\alpha_k$ is an embedding that fixes $A$ pointwise, and such that $B\cup (C_k)h_k\alpha_k$ induces a substructure of $\mathbb{X}$ isomorphic to $\mathbb{D}_k$. 
Since $\mathbb{X}$ is $\omega$-categorical, there are 
automorphisms $\alpha_k\in \Aut(\mathbb{X})_B$ such that a subsequence of $(h_k\alpha_k)_{k\in \N}$ converges to an embedding $f_2\in \Emb(\mathbb X)$.  
The proof is concluded by 
setting $\mathbb{D}$ to be the structure induced by $B\cup (C)f_2$, and setting $f_1$ to be $\id_{B}: \mathbb{B}\to \mathbb{D}$; the claimed properties~(i) and~(ii) are straightforward to verify.

\ignore{
We use a similar argument to that used several times in Section~\ref{section-fraisse} (for example in the proofs of Lemmas~\ref{lem-zariski-functions1} and~
\ref{lem-zariski-functions2}). 
If $\alpha_k\in \Aut(\mathbb{X})_{B}$ is arbitrary, then $h_k\alpha_k: \mathbb{C}_k \to \mathbb{X}$ is an embedding that fixes $A$ pointwise, and such that $B\cup (C_k)h_k\alpha_k$ induces a substructure of $\mathbb{X}$ isomorphic to $\mathbb{D}_k$. 
Since $\mathbb{X}$ is $\omega$-categorical, there are 
automorphisms $\alpha_k\in \Aut(\mathbb{X})_B$ such that a subsequence of $(h_k\alpha_k)_{k\in \N}$ converges to a function $f_2: \mathbb{C}\to \mathbb{X}$. Without loss of generality we may replace $(h_k\alpha_k)_{k\in \N}$ by its convergent subsequence. 
The proof is concluded by 
setting $\mathbb{D}$ to be the union of the sets $B\cup (C_k)h_k\alpha_k$ for all $k$, and setting $f_1$ to be $\id_{B}: \mathbb{B}\to \mathbb{D}$.
}
\end{proof} 

\subsection{Application of the sufficient conditions}
In this section, we use the sufficient conditions developed in the previous sections to show that the endomorphism monoids of structures in a certain class have property \textbf{X} with respect to their automorphism groups. The main theorem in this section is the following.

\theomodeltheorypropertyx*

Note that Proposition~\ref{prop-xxx} is not a special case of Theorem~\ref{theomodeltheorypropertyx} because $\omega \mathbb{K}_n$ does not have the strong amalgamation property when $n\geq 2$.
Before proceeding with the proof of the above theorem, we prove the following
corollary. 

We remark that the age of $n\mathbb{K}_{\omega}$ without loops does not have strong amalgamation with homomorphism gluing, which is why this structure does not appear in the statement of the next corollary.

\begin{cor}\label{cor-property-x}
  The endomorphism monoids of the following structures have property \textbf{X}
  with respect to the pointwise topology and the corresponding automorphism group:
  \begin{enumerate}[\rm (i)]
    \item the random graph;
    \item the random directed graph;
    \item the random strict partial order;
    \item the graph \(n \mathbb{K}_\omega\) with loops for $n\in \N\cup \{\omega\}$, i.e., the random equivalence relations with \(n\) countably infinite equivalence classes;
  \end{enumerate}
  as well as (i)-(iii) with all the loops included. 
\end{cor}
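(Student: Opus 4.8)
The plan is to realise each listed structure $\mathbb{X}$ as an instance of Theorem~\ref{theomodeltheorypropertyx}, so that it suffices to verify that $\mathbb{X}$ is $\omega$-categorical, homogeneous, homomorphism-homogeneous, and that $\Age(\mathbb{X})$ has the strong amalgamation property with homomorphism gluing. The first two properties come for free: every structure in the list is a Fra\"iss\'e limit, hence homogeneous, and each is $\omega$-categorical because $\Aut(\mathbb{X})$ has only finitely many orbits on $k$-element subsets for every $k$ (an orbit is pinned down by the finitely many isomorphism types of induced substructure). So the real content lies in checking homomorphism-homogeneity and homomorphism gluing.

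For homomorphism-homogeneity I would argue uniformly by a one-point forth extension. Given a homomorphism $p$ between finite substructures and a point $v$ to be adjoined to $\dom(p)$, a homomorphism need only respect the \emph{positive} facts involving $v$, and these are finitely many constraints on its image: the out- and in-neighbours of $v$ in a (di)graph, the elements forced below and above $v$ in a partial order, or the class of $v$ in an equivalence relation. In each case the genericity of $\mathbb{X}$ produces a witness for these constraints — the extension property of the random graph and digraph, the universality of the random partial order, and the infiniteness of the classes of $n\mathbb{K}_\omega$ — and their mutual consistency follows from $p$ being a homomorphism together with transitivity in the ordered case. Iterating extends $p$ to an endomorphism.

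For the gluing property I would split the list. For the random graph and random directed graph, and their versions with loops, the age has free amalgamation, and free amalgamation implies the strong amalgamation property with homomorphism gluing (as observed after Definition~\ref{definition-homo-glue}); the loops, being present at every vertex, sit inside each side of the free amalgam and do not interfere. For the strict and reflexive partial orders this is exactly Example~\ref{example-P-has-saphg}. For the random equivalence relation $\omega\mathbb{K}_\omega$ with loops, the amalgam of $\mathbb{B}$ and $\mathbb{C}$ over $\mathbb{A}$ merges the $\mathbb{B}$-class and $\mathbb{C}$-class through each common $\mathbb{A}$-class and keeps all other classes disjoint; since $\omega$ classes are available no further merging is needed, so every cross relation $x\sim_{\mathbb{D}} y$ with $x$ on the $\mathbb{B}$-side and $y$ on the $\mathbb{C}$-side is witnessed by some $a\in A$ with $x\sim a\sim y$. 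Then $f_1g$ and $f_2g$ force $(x)g\sim(a)g\sim(y)g$, and because loops are present, lying in a common class already suffices for the relation — so $g$ is a homomorphism even where it collapses points. This is precisely the step that uses the loops, and it is why, as the preceding remark indicates, the loop-free equivalence relation fails gluing.

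The main obstacle is $n\mathbb{K}_\omega$ with loops for \emph{finite} $n\geq 2$. With only $n$ classes available, amalgamating (say) a two-class $\mathbb{B}$ and a two-class $\mathbb{C}$ over a single shared class forces a merge of two classes \emph{not} witnessed by $\mathbb{A}$, and taking a target $\mathbb{E}$ with two classes shows that gluing then fails; thus $\Age(n\mathbb{K}_\omega)$ does not have the strong amalgamation property with homomorphism gluing and Theorem~\ref{theomodeltheorypropertyx} is unavailable here. I would treat this case directly via the identification $\End(n\mathbb{K}_\omega)\cong \N^\N\wr [n]^{[n]}$ — an endomorphism sends each class into a class and is arbitrary within — with $\Aut(n\mathbb{K}_\omega)\cong\Sym(\N)\wr\Sym([n])$, and establish property \textbf{X} by constructing $f$, $g$ and $t_s$ in the spirit of Proposition~\ref{prop-xxx}, now with the finite and the infinite wreath factors interchanged (the case $n=1$, where the target has a single class, being trivial). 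Showing that this explicit construction produces neighbourhoods of $s$ of the required shape is the most delicate part of the whole argument.
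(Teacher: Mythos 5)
For items (i)--(iii), their versions with loops, and $\omega\mathbb{K}_\omega$ with loops, your proposal is essentially the paper's own proof: both reduce to Theorem~\ref{theomodeltheorypropertyx}, citing homogeneity and homomorphism-homogeneity, getting gluing from free amalgamation for the (di)graphs, from Example~\ref{example-P-has-saphg} for the partial orders, and by the disjoint-classes-plus-transitivity argument for $\omega\mathbb{K}_\omega$ with loops.

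The divergence is case (iv) for finite $n\geq 2$, and there you have caught what appears to be a genuine error in the paper's proof. The paper asserts that the age of $n\mathbb{K}_\omega$ with loops has the strong amalgamation property with homomorphism gluing for \emph{every} $n\leq\omega$ and runs all of (iv) through Theorem~\ref{theomodeltheorypropertyx}; your pigeonhole objection shows this assertion is false for $2\leq n<\omega$. Concretely, for $n=2$ take $\mathbb{A}=\{a\}$, $\mathbb{B}=\{a,b\}$ and $\mathbb{C}=\{a,c\}$ (reflexive, with $a,b$ and $a,c$ in distinct classes). Any amalgam $\mathbb{D}$ in the age has at most two classes, and since embeddings reflect relations, $b$ and $c$ are forced into the class not containing $a$, hence into a common class. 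Then the map $g$ sending $a,b$ to $e$ and $c$ to $e'$, where $\mathbb{E}$ has the two classes $\{e\},\{e'\}$, restricts to homomorphisms on $B$ and on $C$ but is not a homomorphism on $D$, so Definition~\ref{definition-homo-glue}(ii) fails for every admissible $\mathbb{D}$; the same argument works for any finite $n\geq 2$. (Your aside that $n=1$ is harmless is also right: every map into a complete reflexive graph is a homomorphism, so gluing is vacuous there.) So Theorem~\ref{theomodeltheorypropertyx} really is unavailable in this case, and your decision to argue directly is not a detour but a necessity.

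The one gap in your proposal is that this direct argument is only sketched. It can be completed, so the statement itself stands: write $\End(n\mathbb{K}_\omega)\cong\N^\N\wr N^N$ and $\Aut(n\mathbb{K}_\omega)\cong\Sym(\N)\wr\Sym(N)$ with $|N|=n$ finite; fix a partition of $\N$ into infinite sets $(A_y)_{y\in\N}$, let $f^0\colon x\mapsto 2x$ and let $g^0$ send each $A_y$ to $y$. Given $s=((s_i)_{i\in N},\mu)$, put $f_s=((f^0)_{i\in N},\id)$, $g_s=((g^0)_{i\in N},\mu)$, and $t_s=((\tau_i)_{i\in N},\id)$ where $\tau_i\in\Sym(\N)$ sends $2x$ into $A_{(x)s_i}$ for every $x$; then $s=f_st_sg_s$, and because $N$ is finite a basic neighbourhood of $t_s$ constrains every coordinate on a finite set, so $f_s(B\cap\Aut(n\mathbb{K}_\omega))g_s$ contains the basic neighbourhood of $s$ obtained by pulling those finite sets back through $f^0$ --- the same bookkeeping as in Proposition~\ref{prop-xxx}. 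Note that this repair matters beyond the corollary itself: Theorem~\ref{lem-luke-0}(i) needs property \textbf{X} with respect to a Polish sub\emph{group}, so the paper's alternative construction in Theorem~\ref{thm-final} (property \textbf{X} with respect to a copy of $\N^\N$) does not substitute for it in the proof of Corollary~\ref{apex}(v) for finite $n\geq 2$, whereas your construction does.
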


\begin{proof}
  All of the structures mentioned in the hypothesis are classical examples of
  homogeneous structures. Moreover, the random graph, the random directed
  graph, and the random partial order are well-known to be
  homomorphism-homogeneous, see~\cite{Cameron2006aa, Coleman:2017aa}. It is
  routine to show that \(n \mathbb{K}_{\omega}\) is homomorphism-homogeneous and that
  the aforementioned structures with all the loops added are still
  homomorphism-homogeneous. 
  
  The random graph and the random digraph have the free amalgamation property,
  and so also have the strong amalgamation property with homomorphism gluing. We showed in Example~\ref{example-P-has-saphg} the age of the random strict partial order has 
  the strong amalgamation property with homomorphism gluing. The same arguments work for the corresponding structures with
  loops. It is straightforward to show 
  directly that the age of \(n \mathbb{K}_{\omega}\) with loops for \(n
  \leq \omega\) has the strong amalgamation property with homomorphism gluing. 
\end{proof}

In the proof of Theorem~\ref{theomodeltheorypropertyx}, in particular
Claim~\ref{claim-g-exists}, we will use a somewhat non-classical version of
Fra\"{i}ss\'{e} theory. In particular, besides constants, relations, and
functions we will allow partial functions. Under this generalisation,
however, Fra\"{i}ss\'{e}'s Theorem remains unchanged. This can be achieved
by adding a constant \(\na\) to the signature and working with total functions
which yield \(\na\) whenever the corresponding partial function is undefined. 



\begin{proof}[Proof of Theorem~\ref{theomodeltheorypropertyx}]
  We proceed by showing that the structure \(\mathbb{X}\) satisfies the
  hypothesis of Theorem~\ref{prop-structures-property-x}. We begin by
  showing that the required \(f \in \Emb(\mathbb{X})\) and \(W_F \subseteq
  \im(f)\) for every finite \(F \subseteq X \setminus \im(f)\) exist.

  \begin{claim}\label{claim-inside-proof}
    There exists a \textbf{back} and \textbf{fo} embedding \(f \in \Emb(\mathbb{X})\).
\end{claim}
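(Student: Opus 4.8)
The plan is to build $f$ so that $\im(f)$ is a fixed copy of $\mathbb{X}$ whose complement is simultaneously generic and attached to $\im(f)$ through only finitely much data; the latter is exactly what homomorphism gluing buys us. Concretely, I would start from a copy $\mathbb{D}_0\cong\mathbb{X}$ (this will be $\im(f)=\mathbb{C}_0:=\mathbb{D}_0$) and repeatedly invoke Lemma~\ref{lem-homo-glue} with the countable side equal to the structure $\mathbb{D}_k$ built so far and the finite sides adjoining a single new complement point $p$ over a finite base $A\subseteq D_k$, i.e. $\mathbb{A}=A$, $\mathbb{B}=A\cup\{p\}$, $\mathbb{C}=\mathbb{D}_k$. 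This yields $\mathbb{D}_{k+1}\supseteq\mathbb{D}_k$ in which $p$ is glued in over $A$. Doing the usual Fra\"{i}ss\'e bookkeeping — realising every one‑point extension type over every finite subset, of which there are only finitely many over each finite set by $\omega$‑categoricity and Ryll–Nardzewski — the union $\mathbb{D}_\infty=\bigcup_k\mathbb{D}_k$ has the extension property and age $\Age(\mathbb{X})$, hence $\mathbb{D}_\infty\cong\mathbb{X}$. Taking $f\colon\mathbb{X}\xrightarrow{\sim}\mathbb{C}_0\hookrightarrow\mathbb{D}_\infty\cong\mathbb{X}$ produces $f\in\Emb(\mathbb{X})$ with co‑infinite image, and by construction each complement point meets $\im(f)$ only through the finite base over which it was adjoined.

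For the \textbf{fo} property (Definition~\ref{prop-structures-iii}), given a finite $Y\subseteq X\setminus\im(f)$ I would take $W_Y$ to be the (finite) set of points of $\im(f)$ lying in the transitive closure of the bases over which the points of $Y$ were adjoined. Homomorphism gluing guarantees that every relational tuple of $\mathbb{X}$ straddling $Y$ and $\im(f)$ is implied, via membership in $\Age(\mathbb{X})$, by tuples lying inside $Y\cup W_Y$ and inside $\im(f)$; consequently, if $t{\restriction_{\im(f)}}$ and $t{\restriction_{Y\cup W_Y}}$ are homomorphisms, then $t$ already preserves every straddling tuple and so is a homomorphism. This is precisely the mechanism displayed in Example~\ref{example-P-has-saphg}, where a relation $x<^{\mathbb{X}}y$ crossing the two parts factors through the amalgamation base; in the free‑amalgamation cases there are simply no straddling tuples beyond $W_Y$.

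For the \textbf{back} property (Definition~\ref{prop-structures-ii}), given a finite $\mathbb{T}$ I would choose $\mathbb{U}\supseteq T$ large enough to contain the finite part of $\im(f)$ through which the points of $T$ attach. Condition~(i) follows since $\mathbb{X}$ has no algebraicity — strong amalgamation, hence no algebraicity, is inherited from the strong amalgamation property with homomorphism gluing — so the $\Aut(\mathbb{X})_U$‑orbit of any $v\notin U$ is infinite; genericity of the complement together with homogeneity lets me realise the type of $v$ over $U$ by a complement point $w$ whose attachment to $\im(f)$ factors through $U$, whence some $\alpha\in\Aut(\mathbb{X})_U$ sends $v$ to $w\notin\im(f)$. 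Condition~(ii) is one‑point homomorphism gluing at $w$: any relation between $w$ and a point $z\in\im(f)\setminus U$ factors through $U$, so is preserved by $t$ as soon as $t$ is a homomorphism on $(U\cup\{v\})\alpha=U\cup\{w\}$ and on $\dom(t)\setminus\{w\}$.

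The main obstacle is the apparent conflict between the two properties. Condition \textbf{back}(i) forces the complement to be generic, so that every type is realised off $\im(f)$, while \textbf{fo} forces every complement point to meet $\im(f)$ only finitely; yet a genuinely generic copy of $\mathbb{X}$ inside $\mathbb{X}$ (e.g.\ a generic subset of the random graph) has each complement point joined to infinitely much of $\im(f)$. Dissolving this tension is exactly the purpose of homomorphism gluing: the infinitely many straddling facts are logical consequences of finitely many recorded ones plus membership in the age, so the finite sets $W_Y$ and the finite attachment recorded in $U$ genuinely suffice. Verifying that the iterated amalgamation simultaneously keeps $\im(f)\cong\mathbb{X}$, makes $\mathbb{D}_\infty\cong\mathbb{X}$, and maintains the finite‑attachment bookkeeping will be the technical heart of the argument.
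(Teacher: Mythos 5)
Your overall architecture coincides with the paper's: iterate Lemma~\ref{lem-homo-glue}, always taking the whole structure built so far as the countable side $\mathbb{C}$, together with Fra\"iss\'e-style bookkeeping, to produce a structure isomorphic to $\mathbb{X}$ in which a distinguished copy $\im(f)$ sits so that every complement point is glued over a finite base; then obtain \textbf{back} from the fact that every one-point extension type over a finite set $U$ is realised by a point freshly glued over $U$ itself (your adjoining of single points rather than the paper's finite extensions $\mathbb{V}_{i,j}$ is an inessential difference). Your \textbf{back} verification, once the loose phrase ``factors through $U$'' is replaced by an application of Lemma~\ref{lem-homo-glue}(ii) at the stage where $w$ was adjoined, is essentially the paper's argument and is sound: there the map $t$ of Definition~\ref{prop-structures-ii}(ii) is defined on all of the base $U$, so the hypotheses of the gluing property really are met.

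The genuine gap is in the \textbf{fo} verification. Your justification is that ``homomorphism gluing guarantees that every relational tuple straddling $Y$ and $\im(f)$ is implied by tuples lying inside $Y\cup W_Y$ and inside $\im(f)$''. That is not what Definition~\ref{definition-homo-glue}(ii) or Lemma~\ref{lem-homo-glue}(ii) asserts: the gluing property is a statement about partial maps whose domain contains the \emph{entire} amalgamation base, not a tuple-by-tuple implication. In your construction the base of a point of $Y$ may contain complement points that are not in $Y$: say $p\in Y$ was glued over $\{q\}$, where $q\notin Y$ was itself glued earlier over $\{a\}$ with $a\in\im(f)$. The map $t$ of Definition~\ref{prop-structures-iii} is defined only on $\im(f)\cup Y$, hence undefined at $q$, so the hypotheses of the gluing property at the stage where $p$ was adjoined simply fail; and your $W_Y$, which Definition~\ref{prop-structures-iii} forces to lie inside $\im(f)$, cannot repair this because the missing points lie outside $\im(f)$. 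The missing idea --- and the reason Theorem~\ref{theomodeltheorypropertyx} assumes homomorphism-homogeneity, a hypothesis your proof never invokes --- is to first extend the finite homomorphism $t{\restriction_{Y\cup W_Y}}$ to an endomorphism $t'$ of the ambient structure, then patch $t'':=t\cup t'{\restriction_{Z}}$, where $Z$ is the set of complement points of the relevant finite stage, and finally prove by induction along the gluing stages, applying Lemma~\ref{lem-homo-glue}(ii) at each stage, that $t''$ (and hence its restriction $t$) is a homomorphism. This patching-plus-induction step is the technical heart that your sketch replaces by an appeal to a ``tuple-wise'' reformulation of gluing which is not available; in its absence the argument does not go through, even though your choice of $W_Y$ (the $\im(f)$-part of the ancestry of $Y$) is a reasonable one.
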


  \begin{proof}
    We will construct a countable \(\sigma\)-structure \(\mathbb{Y}\) which has
    a self-embedding that is \textbf{back} and \textbf{fo}, and show that the
    structure we have constructed is indeed isomorphic to \(\mathbb{X}\). 
    We define $\mathbb{Y}$ as a union $\bigcup_{n \in \N} \mathbb{Y}_n$ of structures $\mathbb{Y}_n$ which we define by induction, starting with $\mathbb{Y}_0 := \mathbb{X}$. We fix any bijection $F : \N\times \N \to \N\setminus \{0\}$  such that $(i, j)F > i$ for all $i, j\in \N$.  At every step of the induction, right after the construction of $\mathbb Y_n$ we moreover fix a sequence $(\mathbb{U}_{n,k},\mathbb{V}_{n,k})_{k\in \N}$ of pairs such that for all $k\in \N$ we have that  $\mathbb{U}_{n,k}$ is a substructure of both $\mathbb Y_n$ and $\mathbb{V}_{n,k}\in\Age(\mathbb X)$, and such that the following holds: for every finite substructure $\mathbb{A}$ of $\mathbb{Y}_n$ and every finite extension of $\mathbb{B}$ of $\mathbb{A}$ in $\Age(\mathbb X)$, there exists $k\in \N$ such that $\mathbb{U}_{n, k} = \mathbb{A}$ and such that  $\mathbb{V}_{n,k}$ is isomorphic to $\mathbb{B}$ via an isomorphism fixing \(\mathbb{U}_{n, k}\) pointwise. 
    \ignore{
   We define $\mathbb{Y}$ inductively starting with $\mathbb{Y}_0 := \mathbb{X}$. Additionally we require a sequence $(\mathbb{U}_{0,i},\mathbb{V}_{0,i})_{i\in \N}$ of pairs of substructures of $\mathbb{Y}_0$ and $\mathbb{X}$ such that
   for every finite substructure $\mathbb{A}$ of $\mathbb{Y}_0$ and every finite extension of $\mathbb{B}$ of $\mathbb{A}$ in $\mathbb{X}$, there exists $j\in \N$ such that $\mathbb{U}_{0, j} = \mathbb{A}$ and $\mathbb{V}_{0,j}= \mathbb{B}$. 
   }
   
    If $n\in \N$ with $n > 0$ and if $Y_i$ has been
constructed for all $i\in \N$ such that $i<n$, then we construct $\mathbb{Y}_n$ as follows.
Let $i, j\in \N$ be such that  $(i, j)F = n$. Then $i < n$. 
If $\mathbb{A} = \mathbb{U}_{i,j}$, $\mathbb{B} = \mathbb{V}_{i, j}$, and $\mathbb{C} = \mathbb{Y}_{n - 1}$, then we denote the amalgam $\mathbb{D}$ from Lemma~\ref{lem-homo-glue} by $\mathbb{Y}_n$ (here \(e_1\) and \(e_2\) are the identity maps). 
We may suppose that the function $f_2 : \mathbb{Y}_{n -1} \to \mathbb{Y}_{n}$ in Lemma~\ref{lem-homo-glue} is the identity mapping.  
\ignore{
This step in the induction is concluded by 
taking $(\mathbb{U}_{n,k},\mathbb{V}_{n,k})_{k\in \N}$ to be pairs of substructures of $\mathbb{Y}_{n}$ and $\mathbb{X}$ such that
   for every finite substructure $\mathbb{A}$ of $\mathbb{Y}_n$ and every finite extension of $\mathbb{B}$ of $\mathbb{A}$ that embeds in $\mathbb{X}$, there exists $k\in \N$ such that $\mathbb{U}_{n, k} = \mathbb{A}$, \(\mathbb{U}_{n, k}\leq \mathbb{V}_{n, k}\) and $\mathbb{V}_{n,k}$ is isomorphic to $\mathbb{B}$ via an isomorphism fixing \(\mathbb{U}_{n, k}\) pointwise.
   }
   
   Since $\mathbb{X}$ is a substructure of $\mathbb{Y}$, the age of $\mathbb{X}$ is contained in $\mathbb{Y}$.
   On the other hand, by Lemma~\ref{lem-homo-glue}, every $\mathbb{Y}_n$ embeds in $\mathbb{X}$
   and so every finite substructure of $\mathbb{Y}$ is also a substructure of $\mathbb{X}$.

The set of finite partial isomorphisms of $\mathbb{Y}$
is a back and forth system since both the domain and the range of any such partial isomorphism $f$ are
extended in all possible ways in the construction of $\mathbb{Y}$. Hence, $\mathbb{Y}$ is homogeneous
and so isomorphic to $\mathbb{X}$. Let $f\in \Emb(\mathbb{Y})$ be such that $\im (f) = \mathbb{X}=\mathbb Y_0$. We will show that $f$ is \textbf{back} and \textbf{fo}. 
  
\textbf{back:} If $\mathbb{T}$ is any finite substructure of $\mathbb{Y}$, then we set $\mathbb{U}$ in Definition~\ref{prop-structures-ii} to be $\mathbb{T}$. If $v\in Y \setminus U$ and $\mathbb{W}$ is the substructure of $\mathbb{Y}$ induced by $U\cup \{v\}$, then 
there exist \(i, j\in \N\) such that \(\mathbb{U}_{i,j}=\mathbb{U}\) and \(\mathbb{V}_{i,j}\) is isomorphic to $\mathbb{W}$ via an isomorphism $\beta: \mathbb{W} \rightarrow \mathbb{V}_{i,j}$ fixing \(U\) pointwise. If $n = (i, j)F$, 
 then the required map \(\alpha\) in Definition~\ref{prop-structures-ii} is any automorphism of $\mathbb{Y}$ extending $\beta f_1$ where \(f_1: \mathbb{V}_{i,j} \to \mathbb{Y}_n\) is the map from Lemma~\ref{lem-homo-glue} used in the construction of \(\mathbb{Y}_{n}\). 
 It is possible to verify that $\alpha$ satisfies the criteria in Definition~\ref{prop-structures-ii} and hence is \textbf{back}.
 
\textbf{fo:} It suffices to prove that if $n\in \N$ and $T\subseteq Y_n\setminus X= Y_n\setminus \im(f)$ is finite and $T \not \subseteq Y_{n-1}$, then \(W_T:=\bigcup_{i\leq n} U_{(i)F ^ {-1}}\cap X\) satisfies Definition~\ref{prop-structures-iii}. In other words, if $t:X \cup T \rightarrow Y$ is a function such that $t\restriction_{X}$ and $t\restriction_{T \cup W_T}$ are homomorphisms, then $t$ is a homomorphism.

We proceed by induction on $n$. The base case $n=0$ is trivial since $T=\varnothing$. So suppose that $n>0$, that the inductive hypothesis holds for all $i<n$, and that $T$ and $t$ are as above. By homomorphism-homogeneity there is $t'\in\End(\mathbb Y)$  extending \(t\restriction_{T\cup W_T}\). 
 If we define \(t'':= t \cup t'\restriction_{Y_n\backslash X} \), then $t'':Y_n \to Y$ 
 is a function by definition of $t'$. It suffices to show that \(t''\) is a homomorphism. Note that \(t''\restriction_{X}=t\restriction_{X}\) and \(t''\restriction_{(Y_n\backslash X) \cup W_T}=t'\restriction_{(Y_n\backslash X) \cup W_T}\) are both homomorphisms.
 
 Then, by the inductive hypothesis applied to $Y_{n - 1} \setminus X$ and $W_{Y_{n - 1} \setminus X}\subseteq W_T$, since  
 \[
( Y_{n - 1}\setminus X) \cup W_{Y_{n - 1} \setminus X}\subseteq (Y_n\backslash X) \cup W_T\] 
 $t''\restriction_{X}$ and $t''\restriction_{(Y_{n - 1}\setminus X) \cup W_{Y_{n - 1} \setminus X}}$
 are homomorphisms,
it follows that the
restriction of \(t''\) to \(Y_{n-1}\) is a homomorphism.
By the definition of
homomorphism gluing, the
restriction of \(t''\) to \(Y_n\) (which is \(t''\)) is a homomorphism.
  \end{proof}

  It remains to show that the there is \(g \in \End(\mathbb{X})\) as in
  Theorem~\ref{prop-structures-property-x}.

  \begin{claim}\label{claim-g-exists}
    There exists a \textbf{rth} \(g \in \End(\mathbb{X})\). 
  \end{claim}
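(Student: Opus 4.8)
The plan is to construct the required \(g\) directly inside \(\mathbb{X}\) as an increasing union \(g=\bigcup_{s\in\N} g_s\) of finite partial homomorphisms, arranging by a bookkeeping argument that \(g\) meets three families of requirements: \textbf{(T)} every \(x\in X\) eventually lies in \(\dom(g_s)\), so that \(g\) is total; \textbf{(S)} every \(y\in X\) eventually lies in \(\im(g_s)\), so that \(g\) is surjective; and \textbf{(R)} for every finite \(U\subseteq X\), every \(u\in X\setminus U\) and every \(v\in X\) with \(g{\restriction_U}\cup\{(u,v)\}\) a homomorphism, there is some \(w\in X\) having the same type over \(U\) as \(u\) with \((w)g=v\). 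Since \(\mathbb{X}\) is \(\omega\)-categorical there are only finitely many types over each finite \(U\), so the requirements can be enumerated; moreover \(g{\restriction_U}\) is already determined at the finite stage at which \(U\subseteq\dom(g_s)\), and after that stage the relevant instances of \textbf{(R)} for \(U\) become well-posed and can be scheduled.

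The engine of the construction is the realisation of a suitable fresh point \(w\) at each stage meeting an instance of \textbf{(R)}. Let \(\mathbb{K}_s\) be the finite substructure of \(\mathbb{X}\) induced on \(\dom(g_s)\cup\im(g_s)\cup\{v\}\), and let \(\mathbb{B}\) be the structure induced on \(U\cup\{u\}\) over \(\mathbb{A}:=U\). Applying the strong amalgamation property with homomorphism gluing of \(\Age(\mathbb{X})\) (Definition~\ref{definition-homo-glue}, cf.\ Lemma~\ref{lem-homo-glue}) to \(\mathbb{A}\), \(\mathbb{B}\), \(\mathbb{K}_s\) produces an amalgam \(\mathbb{D}\) which, by homogeneity of \(\mathbb{X}\), may be taken to sit inside \(\mathbb{X}\) with \(\mathbb{K}_s\) included identically; write \(w\) for the image of \(u\). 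Then \(w\) realises the type of \(u\) over \(U\), and I set \(g_{s+1}:=g_s\cup\{(w,v)\}\). That \(g_{s+1}\) is again a partial homomorphism is exactly the content of the gluing clause Definition~\ref{definition-homo-glue}(ii): the restriction of \(g_{s+1}\) to \(\mathbb{K}_s\) is \(g_s\) and its restriction to \(U\cup\{w\}\) is \(g{\restriction_U}\cup\{(u,v)\}\) transported along the amalgamation, both of which are homomorphisms, and \(U\subseteq\dom(g_{s+1})\), so the whole map is a homomorphism. Requirement \textbf{(T)} is met by invoking homomorphism-homogeneity to extend the current finite partial homomorphism to an endomorphism of \(\mathbb{X}\) and reading off a value at the next point of \(X\), while \textbf{(S)} is the special case \(U=\varnothing\) of \textbf{(R)}.

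Finally, \(g=\bigcup_s g_s\) is a total, surjective homomorphism \(\mathbb{X}\to\mathbb{X}\), hence a surjective endomorphism. To see that it is \textbf{rth} in the sense of Definition~\ref{prop-structures-iv}, take finite \(U\), \(u\in X\setminus U\) and \(v\) with \(g{\restriction_U}\cup\{(u,v)\}\) a homomorphism; requirement \textbf{(R)} supplies \(w\) with the same type over \(U\) as \(u\) and \((w)g=v\), and homogeneity of \(\mathbb{X}\) extends \(\id_U\cup\{(u,w)\}\) to \(\alpha\in\Aut(\mathbb{X})_U\), whence \((u)\alpha g=(w)g=v\). The main obstacle is the preservation of the partial homomorphism property when committing \(g(w)=v\): the new point \(w\) can acquire relations to the already-committed part of \(\mathbb{X}\) that are \emph{forced} (for instance by transitivity, as in the random strict partial order) rather than free, and a priori there is no reason that \(g\) should respect them. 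This is precisely where the strong amalgamation property with homomorphism gluing is indispensable — it guarantees that every such forced crossing relation is implied through the base \(U\), on which \(g{\restriction_U}\cup\{(u,v)\}\) is already a homomorphism, so the extension stays a homomorphism. The remaining care is the standard dovetailing needed to meet the infinitely many instances of \textbf{(R)} (one per admissible \(v\)) without conflict, which is unproblematic because each step only adjoins a fresh point.
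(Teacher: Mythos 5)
Your proof is correct, but it takes a genuinely different route from the paper. The paper does not build \(g\) inside \(\mathbb{X}\) at all: it introduces an auxiliary signature \(\tau\) extending \(\sigma\) by two unary predicates \(L,R\) and a \emph{partial function symbol} \(\alpha\), forms the class \(\mathcal{K}\) of finite \(\tau\)-structures in which \(L,R\) partition the domain, both parts lie in \(\Age(\mathbb{X})\), and \(\alpha\) is a homomorphism from the \(L\)-part to the \(R\)-part; SAHG is used exactly once, to verify the amalgamation property of \(\mathcal{K}\). Taking the Fra\"{i}ss\'{e} limit \(\mathbb{Y}\) of \(\mathcal{K}\), the paper shows \(\alpha^{\mathbb{Y}}\) is surjective, uses homomorphism-homogeneity to identify both \(\reduct{L}{Y}\) and \(\reduct{R}{Y}\) with \(\mathbb{X}\), and defines \(g=i_1^{-1}\alpha^{\mathbb{Y}}i_2\); the \textbf{rth} property then falls out of the homogeneity of \(\mathbb{Y}\). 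Your construction instead builds the same generic homomorphism by hand: a dovetailed union of finite partial homomorphisms, where each requirement \textbf{(R)} is met by placing a SAHG amalgam of \(U\cup\{u\}\) and the current configuration over \(U\) inside \(\mathbb{X}\) (via homogeneity and strong amalgamation, which makes \(w\) fresh), and the gluing clause of Definition~\ref{definition-homo-glue}(ii) is what certifies that \(g_s\cup\{(w,v)\}\) is still a partial homomorphism --- your diagnosis of why gluing is indispensable (forced cross-relations, e.g.\ transitivity in the strict order) is exactly right. What each approach buys: the paper's outsources all bookkeeping to Fra\"{i}ss\'{e}'s theorem, at the price of the non-classical extension of Fra\"{i}ss\'{e} theory to partial function symbols (which the paper explicitly flags before the proof); yours is elementary and self-contained, needing only the classical amalgamation of \(\Age(\mathbb{X})\) and explicit scheduling. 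Two fine points you should make explicit: surjectivity via the case \(U=\varnothing\) amounts to applying SAHG over the empty structure, which is legitimate here since \(\varnothing\in\Age(\mathbb{X})\) under the paper's conventions (the paper itself amalgamates over the empty \(\tau\)-structure to deduce JEP); and ``same type over \(U\)'' must mean same quantifier-free type, so that homogeneity turns it into an \(\Aut(\mathbb{X})_U\)-orbit equivalence, which your amalgamation step does deliver because \(f_1\) fixes \(U\) pointwise.
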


  \begin{proof}
    Let \(\tau\) be the expansion of the signature \(\sigma\) by a partial function symbol
    \(\alpha\) and two unary relation symbols \(L\) and \(R\). Let \(\mathcal{K}\) be
    the class consisting of all finitely generated \(\tau\)-structures
    \(\mathbb{A}\) such that: 
    \begin{enumerate}[\rm (i)]
      \item \(L^{\mathbb{A}}\) and \(R^{\mathbb{A}}\) partition \(A\);
  
      \item \(\reduct{L}{A}\) and \(\reduct{R}{A}\), the
        \(\sigma\)-reducts of the induced substructures of \(\mathbb{A}\) on
        \(L^{\mathbb{A}}\) and \(R^{\mathbb{A}}\) respectively, are in
        \(\Age(\mathbb{X})\); 
  
      \item 
      $\alpha^{\mathbb{A}}: \reduct{L}{A} \to \reduct{R}{A}$ is a homomorphism and $\dom(\alpha^{\mathbb{A}}) = L^{\mathbb{A}}$.
    \end{enumerate}
    We emphasize that although \(\alpha^{\mathbb{A}}\) is a partial function on
    \(\mathbb{A}\), it is a full function on \(\reduct{L}{A}\). Next,
    we show that \(\mathcal{K}\) is a Fra\"{i}ss\'{e} class. 

    It is routine to verify that \(\mathcal{K}\) has the hereditary property, since
    \(\Age(\mathbb{X})\) has the hereditary property. Next, we will use strong
    amalgamation with homomorphism gluing to show that
    \(\mathcal{K}\) has the amalgamation property. Let \(\mathbb{A},
    \mathbb{B}, \mathbb{C} \in \mathcal{K}\), and let \(e_1 \colon \mathbb{A}
    \to \mathbb{B}\) and \(e_2 \colon \mathbb{A} \to \mathbb{C}\) be
    embeddings. For the ease of notation, we may assume that \(\mathbb{A}\) is
    a substructure of \(\mathbb{B}\) and \(\mathbb{C}\) and that \(e_1, e_2\)
    are identity maps. Since \(\reduct{R}{A}, \reduct{R}{B}, \reduct{R}{C} \in
    \Age(\mathbb{X})\) and \(\reduct{R}{A}\) is a substructure of both
    \(\reduct{R}{B}\) and \(\reduct{R}{C}\), there exists \(\mathbb{D}_R \in
    \Age(\mathbb{X})\), \(f_{R, 1} \colon \reduct{R}{B} \to \mathbb{D}_R\), and
    \(f_{R,2} \colon \reduct{R}{C} \to \mathbb{D}_R\) witnessing the strong
    amalgamation property with homomorphism gluing of \(\Age(\mathbb{X})\).
    Analogous witnesses \(\mathbb{D}_L\), \(f_{L,1}\), and \(f_{L,2}\) exist
    for the triple \(\reduct{L}{A}\), \(\reduct{L}{B}\), and \(\reduct{L}{C}\).
    Define \(p = f_{L, 1}^{-1} \alpha^{\mathbb{B}} f_{R,1} \cup f_{L, 2}^{-1}
    \alpha^{\mathbb{C}} f_{R,2}\). If \(L(A)\) is the domain of
    \(\reduct{L}{A}\), then the domains of the two parts in the definition of
    \(p\) intersect on \((L(A))f_{L, 1} = (L(A))f_{L, 2}\),
    \(\alpha^{\mathbb{B}}\) agrees with \(\alpha^{\mathbb{C}}\) on \(L(A)\), and
    \(f_{R, 1}\) agrees with \(f_{R, 2}\) on \( R(A) = (
    L(A))\alpha^{\mathbb{A}}\). Hence \(p \colon \mathbb{D}_L \to
    \mathbb{D}_R\) is a well-defined map.  Moreover, \( (L(A))f_{1, L} = (L(A))f_{2, L} \subseteq
    \dom(p)\), and \(f_{L, 1} p\), \(f_{L, 2} p\) are homomorphisms. Therefore
    \(p\) is a homomorphism since \(\mathbb{D}_L\) is a witness of the strong
    amalgamation property with homomorphism gluing. Let \(\mathbb{D}\) be the
    disjoint union of \(\mathbb{D}_L\) and \(\mathbb{D}_R\) with
    \(L^{\mathbb{D}}\) and \(R^{\mathbb{D}}\) being the domains of the two
    witnesses, and define \(\alpha^{\mathbb{D}}\) to be the homomorphism \(p\).
    We claim that \(\mathbb{D}\) is in \(\mathcal{K}\).  Since \(\reduct{L}{D}
    = \mathbb{D}_L\) and \(\reduct{R}{D} = \mathbb{D}_R\), it follows that
    \(\mathbb{D}\) satisfies properties (i) and (ii). Moreover,
    \(\alpha^{\mathbb{D}} \colon \reduct{L}{D} \to \reduct{R}{D}\) is a
    homomorphism as discussed above, proving the claim.  Observe that
    \(\mathcal{K}\) contains the empty \(\tau\)-structure, and so the
    amalgamation property automatically implies the joint embedding property.
    Therefore, \(\mathcal{K}\) is a Fra\"{i}ss\'{e} class.

    Let \(\mathbb{Y}\) be the Fra\"{i}ss\'{e} limit of \(\mathcal{K}\). Then
    \(\alpha^{\mathbb{Y}}\) is a homomorphism from \(\reduct{L}{Y}\) to
    \(\reduct{R}{Y}\). We will show that \(\alpha^{\mathbb{Y}}\) is
    surjective also. Suppose that \(y \in \reduct{R}{Y}\) is arbitrary, and let \(\mathbb{A}\) be the substructure of
    \(\reduct{R}{Y}\) induced by \(y\). Define \(\mathbb{B_A}\) to be a
    \(\tau\)-structure such that \(\reduct{L}{B_A}\) and \(\reduct{R}{B_A}\) are
    both isomorphic to \(\mathbb{A}\); and so that \(\alpha^{\mathbb{B_A}}\) is
    an isomorphism between the two structures. Then \(\mathbb{B_A}\) is in
    \(\mathcal{K}\), and so it embeds into \(\mathbb{Y}\). Suppose, without loss of generality, that \(\mathbb{B_A}\) is a substructure of \(\mathbb{Y}\), and let \( e \colon \reduct{R}{B_A} \to \mathbb{A}\) be an isomorphism. Then \(e\) is also a partial isomorphism between substructures of \(\mathbb{Y}\), and since \(\mathbb{Y}\)
    is homogeneous, it follows that
    \(e\) can be extended to \(h \in \Aut(\mathbb{Y})\). By the choice of \(\mathbb{B_A}\), there is \(x \in Y\) such that \( (x)\alpha^{\mathbb{Y}} = (y)e^{-1} = (y)h^{-1}\). Therefore, \(y = (x)h \alpha^{\mathbb{Y}}\) is in the image of
    \(\alpha^{\mathbb{Y}}\). 
  
  Next, we show that both \(\reduct{L}{Y}\) and \(\reduct{R}{Y}\) are
    isomorphic to \(\mathbb{X}\). 
     In order to do so, it suffices, by
    Fra\"{i}ss\'{e}'s theorem, to show that both \(\reduct{L}{Y}\) and
    \(\reduct{R}{Y}\) are homogeneous and have the same age as \(\mathbb{X}\). By definition, the age of \(\reduct{L}{Y}\) is contained in that of $\mathbb X$. 
    Using homomorphism-homogeneity, we will show that for all finite induced substructures $\mathbb A$ of \(\reduct{L}{Y}\) and all finite extensions $\mathbb B$ of $\mathbb A$ in $\Age(\mathbb X)$, there exists an embedding of $\mathbb B$ into \(\reduct{L}{Y}\) fixing $\mathbb A$ pointwise. This shows that  $\Age(\mathbb X)$ is contained in the age of \(\reduct{L}{Y}\), and that \(\reduct{L}{Y}\) is homogeneous since it immediately implies that the set of finite isomorphisms extending a given  isomorphism between finite substructures of  \(\reduct{L}{Y}\) is a back-and-forth system. The argument for \(\reduct{R}{Y}\) is the same. 
    
    So let $\mathbb A$ and $\mathbb B$ as above be given.
    Let \(\mathbb{D}\) be the substructure of
    \(\mathbb{Y}\) generated by \(A\). Since the age of \(\reduct{L}{Y}\) and 
    \(\reduct{R}{Y}\) is contained in that of \(\mathbb{X}\), we may assume that \(\reduct{L}{D}\)
    and \(\reduct{R}{D}\) are substructures of \(\mathbb{X}\). In that case,
    \(\alpha^{\mathbb{D}}\) is a homomorphism between finite 
    substructures of \(\mathbb{X}\), and so it can be extended to an endomorphism
    \(h\) of \(\mathbb{X}\), since \(\mathbb{X}\) is homomorphism-homogeneous.
    Moreover, by homogeneity, we may assume that also \(\mathbb{B}\) is a substructure of \(\mathbb{X}\). Let \(\mathbb{E}\) be the  \(\tau\)-structure obtained by taking the disjoint
    union of $\mathbb B$ and the induced substructure of \(\mathbb{X}\) on the set $(B)h$, setting \(L^{\mathbb{E}}=B\),
    \(R^{\mathbb{E}}=(B)h\), and  \(\alpha^{\mathbb{E}}\) the
    restriction of \(h\) to \(B\). Then \(\mathbb{E} \in \mathcal{K}\) and
    \(\mathbb{D}\) is an induced substructure of \(\mathbb{E}\), since \(A
    \subseteq B\) and \(\alpha^{\mathbb{E}}\) extends
    \(\alpha^{\mathbb{D}}\). Hence, by the homogeneity of $\mathbb Y$, there is an embedding  of  \(\mathbb{E}\)
    into \(\mathbb{Y}\) fixing $A$ pointwise; this map also  embeds  $\mathbb B$ into \(\reduct{L}{Y} \), proving our claim. 
    
    Let \(i_1\) and \(i_2\) denote  isomorphisms from
    \(\reduct{L}{Y}\) and \(\reduct{R}{Y}\) to \(\mathbb{X}\), respectively. The final step is to show that \(g := i_1^{-1} \alpha^{\mathbb{Y}} i_2\) is \textbf{rth}. Let
    \(U \subseteq X\) be finite, and let \(u \in X \setminus U\) and \(v \in
    X\) be such that \(g{\restriction_U}\cup \{(u, v)\}\) is a homomorphism.
    Let \(\mathbb{H}\) be a \(\tau\)-structure obtained by taking a disjoint
    union of the induced substructures of \(\mathbb{X}\) on \((U \cup \{u\})i_1^{-1}\)
    and \( ((U)g \cup \{v\})i_2^{-1}\), let these two sets be \(L^{\mathbb{H}}\) and
    \(R^{\mathbb{H}}\), and define \(\alpha^{\mathbb{H}}\) to be
    \(i_1 (g{\restriction_U}\cup \{(u, v)\}) i_2^{-1}\). Then \(\mathbb{H}\) is in
    \(\mathcal{K}\). Since \(\mathbb{Y}\) is homogeneous and since
    \(\mathbb{H}\) restricted to \((U)i^{-1}_1 \cup ((U)g \cup \{v\})i^{-1}_2\) is also in
    \(\mathcal{K}\), it follows that there is an embedding \(e \colon
    \mathbb{H} \to \mathbb{Y}\) such that \((x)e = x\) for every \((U)i^{-1}_1 \cup ((U)g \cup \{v\})i^{-1}_2\).
    If \(w := (u)i_1^{-1} e i_1\), then
    \[
      (w)g = (u)i_1^{-1} e i_1 i_1^{-1}\alpha^{\mathbb{Y}}i_2 = (u)i_1^{-1}\alpha^{\mathbb{H}} e i_2 =
      (v)i_2^{-1} e i_2 = v,
    \]
    since \(e\) is an embedding. Moreover, \(i_1^{-1} e{\restriction_{(U \cup \{u\})i_1^{-1}}} i_1\),
    that is the map  \(x \mapsto x\) if \(x \in U\) and \(x \mapsto w\) if \(x
    = u\), is a partial isomorphism of \(\mathbb{X}\). Hence there is \(\alpha
    \in \Aut(\mathbb{X})\) such that \( (x)\alpha = x\) for all \(x \in U\) and
    \( (u)\alpha g = v\).
  \end{proof}
  Hence \(\End(\mathbb{X})\) equipped with the pointwise topology has property
  \textbf{X} with respect to \(\Aut(\mathbb{X})\) by
  Theorem~\ref{prop-structures-property-x}.
\end{proof}

\section{Uniqueness and non-uniqueness}
\label{section-apex}

\subsection{Non-uniqueness of Polish topologies}
\label{subsection-self}

In \cite[Theorem 5.22]{Elliott2019aa} it was shown that the monoid $\Inj(\N)$ consisting of all injective functions from the natural numbers $\N$ to $\N$ possesses infinitely many Polish semigroup topologies containing the pointwise topology. 
We will use these distinct Polish topologies on $\Inj(\N)$ to show that certain
endomorphism monoids and monoids of self-embedding of relational structures
do not have unique Polish topologies.

\begin{prop}\thlabel{prop:non-unique-top}
  Let $S$ be a closed submonoid of $\Inj(\N)$ with the pointwise topology such that the group of units $G$ of $S$ is not closed. Then $S$ has at least two distinct Polish semigroup topologies. 
\end{prop}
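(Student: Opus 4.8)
The plan is to exhibit two distinct Polish semigroup topologies on $S$: the pointwise topology $\T_p$ (restricted to $S$), which is Polish because $S$ is closed in $\Inj(\N)$ and $\Inj(\N)$ is closed in $\N^\N$; and a strictly finer ``inverse-sensitive'' topology $\T'$, which is one of the finer Polish semigroup topologies on $\Inj(\N)$ supplied by \cite[Theorem 5.22]{Elliott2019aa}. Concretely, I would work with the following explicit model of $\T'$. For $f \in \Inj(\N)$ let $\widehat{f} \colon \N \to \N \cup \{\ast\}$ be the total extension of the partial inverse of $f$, so that $(y)\widehat{f} = x$ when $(x)f = y$ and $(y)\widehat{f} = \ast$ when $y \notin \im(f)$. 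Let $\T'$ be the initial topology on $\Inj(\N)$ induced by the injection $f \mapsto (f, \widehat{f})$ into $\N^\N \times (\N \cup \{\ast\})^\N$, where both factors carry the product of the discrete topologies.

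First I would record the basic properties of $\T'$ on $\Inj(\N)$. The image of $f \mapsto (f,\widehat f)$ is cut out by the clopen (hence closed) conditions expressing that $f$ is injective and that $\widehat f$ is the partial inverse of $f$; thus the image is closed in a Polish space and $\T'$ is Polish. Composition is $\T'$-continuous because $\widehat{fg}$ is computed coordinatewise from $\widehat f$ and $\widehat g$ (with the convention $(\ast)\widehat{h} = \ast$), so $\T'$ is a Polish semigroup topology, and clearly $\T_p \subseteq \T'$. Since $S$ is $\T_p$-closed and $\T_p \subseteq \T'$, the set $S$ is also $\T'$-closed, so $\T'{\restriction_S}$ is Polish (a closed subspace of a Polish space); being the restriction of a semigroup topology to a submonoid, it is again a semigroup topology. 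Hence both $\T_p{\restriction_S}$ and $\T'{\restriction_S}$ are Polish semigroup topologies on $S$.

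The crux, and the step I expect to be the main obstacle, is to show that these two topologies on $S$ are distinct; this is exactly where the hypothesis that $G$ is not closed is used. I would prove that $G$ is closed in $(S, \T'{\restriction_S})$. Every unit $g \in G$ is a bijection (a surjective injection) with $\widehat g = g^{-1} \in S$. Suppose $g_n \in G$ with $g_n \to f$ in $\T'{\restriction_S}$; then $g_n \to f$ pointwise (so $f \in S$, as $S$ is $\T_p$-closed) and $\widehat{g_n} \to \widehat f$ pointwise. If $f$ were non-surjective, then $(y)\widehat f = \ast$ for some $y$, which is impossible as a pointwise limit of the values $(y)\widehat{g_n} \in \N$ in the discrete space $\N \cup \{\ast\}$; hence $f$ is a bijection and $\widehat f = f^{-1}$. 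Then $g_n^{-1} = \widehat{g_n} \to f^{-1}$ pointwise with each $g_n^{-1} \in S$, whence $f^{-1} \in S$ by $\T_p$-closedness, and therefore $f \in G$. Thus $G$ is $\T'{\restriction_S}$-closed. By hypothesis $G$ is not $\T_p{\restriction_S}$-closed, so the two topologies assign different closures to $G$ and are therefore distinct, completing the proof.
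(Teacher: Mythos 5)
Your proposal is correct and takes essentially the same approach as the paper: the paper likewise plays the pointwise topology against a finer "inverse-aware" Polish semigroup topology on $\Inj(\N)$ — namely the subspace topology induced by $\mathcal{I}_4$ of \cite[Theorem 5.15]{Elliott2019aa}, which is exactly your $\T'$ — and distinguishes the two restrictions to $S$ by the fact that the unit group $G$ is closed in the finer topology but, by hypothesis, not in the pointwise one. The only difference is presentational: you construct $\T'$ explicitly via $f \mapsto (f, \widehat{f}\,)$ and prove closedness of $G$ by a direct sequential argument, whereas the paper cites \cite[Theorem 5.22]{Elliott2019aa} for the properties of $\T'$ and deduces closedness of $G$ from the closedness of $\Sym(\N)$ in $\mathcal{I}_4$ together with the identity $G = (\Sym(\N)\cap S)\cap(\Sym(\N)\cap S)^{-1}$.
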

\begin{proof}
  By \cite[Theorem 5.22]{Elliott2019aa},
  the pointwise topology and
  the subspace topology $\T$ induced by $\mathcal{I}_4$ (defined in \cite[Theorem  5.15]{Elliott2019aa}) are Polish semigroup topologies on $\Inj(\N)$; 
  the topology $\T$ contains the pointwise topology and these two topologies coincide on $\Sym(\N)$.
  Since $S$ is closed in the pointwise topology and $\T$, it follows that both  are Polish semigroup topologies on $S$ also. 
  The symmetric group $\Sym(\N)$ is not closed in the pointwise topology but is closed in $\mathcal{I}_4$.
  By assumption, $G$ is not closed in the pointwise topology, but
  $G = (\Sym(\N) \cap S) \cap (\Sym(\N) \cap S) ^ {-1}$ is closed in $\T$. Therefore these two topologies are distinct.
\end{proof}

If $\mathbb{X}$ is an  $\omega$-categorical structure, then $\Aut(\mathbb{X})$ is not closed in the pointwise topology on $X ^ X$; see e.g.~~\cite{Bodirsky2017aa}. Hence, Proposition~\ref{prop:non-unique-top} then implies that $\Emb(\mathbb X)$ admits at least two distinct Polish semigroup topologies. More generally, any closed submonoid of $\Inj(X)$ containing the automorphism group of  $\mathbb X$ (such as, for example, the monoid of elementary self-embeddings of $\mathbb X$, or, in the case of a \emph{model-complete core}~\cite{Cores-journal}, its endomorphism monoid) admits
at least two distinct Polish semigroup topologies.

\ignore{
  If $\mathbb{X}$ is a homogeneous relational structure, 
  then the closure $\overline{\Aut(\mathbb{X})}$ of $\Aut(\mathbb{X})$ in the pointwise topology is $\Emb(\mathbb{X})$. 
  If $\mathbb{X}$ admits an embedding which is not an automorphism, then  Proposition~\ref{prop:non-unique-top} implies that $\Emb(\mathbb{X})$ admits at least two distinct Polish semigroup topologies.
  If $\mathbb{X}$ is $\omega$-categorical, then $\Aut(\mathbb{X})$ is not closed in the pointwise topology on $X ^ X$.
  This can be demonstrated by an argument similar to, but
simpler than, the one in \thref{lem-zariski-functions1}. 
 Hence Proposition~\ref{prop:non-unique-top} implies that for any $\omega$-categorical structure $\mathbb{X}$, the self-embedding monoid $\Emb(\mathbb{X})$ admits at least two distinct Polish semigroup topologies.
 }
 \ignore{
A relational structure $\mathbb{X}$ is known as a \textit{model-complete core} if the closure of $\Aut(\mathbb{X})$ is $\End(\mathbb{X})$.  Model-complete cores play an important role in the study of computational complexity of Constraint Satisfaction Problems over $\omega$-categorical structures, see, for example,~\cite{Barto2019}. A further corollary of Proposition~\ref{prop:non-unique-top} is: if $\mathbb{X}$ is a model-complete core and $\Aut(\mathbb{X}) \neq \End(\mathbb{X})$, then $\End(\mathbb{X})$ has at least two distinct Polish semigroup topologies. }

\begin{prop}\label{prop-infinitely-many-wreath-product}
The endomorphism monoid of the graph $n \mathbb{K}_{\omega}$  without loops for every non-zero $n\in \N \cup \{\omega\}$ possesses infinitely many Polish semigroup topologies.
\end{prop}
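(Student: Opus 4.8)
The plan is to identify $\End(n\mathbb{K}_\omega)$ (without loops) with a wreath product whose fibre is $\Inj(\N)$, and then lift to the wreath product the infinitely many Polish semigroup topologies on $\Inj(\N)$ provided by~\cite[Theorem 5.22]{Elliott2019aa}. First I would establish the isomorphism. Write $N = \{1, \dots, n\}$ when $n$ is finite and $N = \N$ when $n = \omega$; in either case $N$ is countable and indexes the connected components (copies of $\mathbb{K}_\omega$) of $n\mathbb{K}_\omega$. Because there are no loops, an edge of $n\mathbb{K}_\omega$ is precisely a pair of \emph{distinct} vertices lying in a common component. Hence if $f \in \End(n\mathbb{K}_\omega)$ and $x \neq y$ lie in one component, then $(x)f \neq (y)f$ again lie in one component; consequently $f$ maps each component injectively into a single component. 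Identifying every component with $\N$, such an $f$ is encoded by a base map $g \in N^N$ recording the destination component of each component, together with a tuple $(f_i)_{i \in N}$ of injections $f_i \in \Inj(\N) = \End(\mathbb{K}_\omega)$. A direct computation with the right action shows that composition of endomorphisms corresponds exactly to the wreath product multiplication of the preliminaries, so $\End(n\mathbb{K}_\omega) \cong \Inj(\N)\wr N^N$ as semigroups. This is the step where the absence of loops is essential: with loops the fibre would be all of $\N^\N$ (which, being rigid, has a unique Polish semigroup topology) rather than the flexible $\Inj(\N)$.

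Next, by~\cite[Theorem 5.22]{Elliott2019aa} there are pairwise distinct Polish semigroup topologies $\T_0, \T_1, \dots$ on $\Inj(\N)$. For each $m$, let $\mathcal{W}_m$ be the wreath product topology on $\Inj(\N)\wr N^N$ obtained by taking $(\Inj(\N), \T_m)$ as the fibre and $N^N$ with its (Polish) pointwise topology. As recorded in the preliminaries, $\mathcal{W}_m$ is a semigroup topology, and since $(\Inj(\N),\T_m)$ is Polish and $N$ is countable, $\mathcal{W}_m$ is Polish. Transporting the $\mathcal{W}_m$ along the isomorphism above therefore yields Polish semigroup topologies on $\End(n\mathbb{K}_\omega)$.

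Finally I would prove that the $\mathcal{W}_m$ are pairwise distinct. Fix $i_0 \in N$ and consider the subsemigroup
\[
C = \set{((f_i)_{i \in N}, \id_N)}{f_i = \id_\N \text{ for all } i \neq i_0},
\]
which is isomorphic to $\Inj(\N)$ via $((f_i)_{i\in N}, \id_N) \mapsto f_{i_0}$. Since the subspace topology of a product is the product of the subspace topologies, $\mathcal{W}_m$ restricts on $C$ to a single copy of $\T_m$ (all other fibre coordinates and the base coordinate being singletons). Thus if $m \neq m'$, then $\T_m \neq \T_{m'}$ forces $\mathcal{W}_m$ and $\mathcal{W}_{m'}$ to induce distinct subspace topologies on $C$, whence $\mathcal{W}_m \neq \mathcal{W}_{m'}$.

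The routine verifications are the semigroup isomorphism and the standard ``subspace of a product'' computation; the only genuinely delicate point is ensuring that the lifted topologies remain distinct, which the subspace copy $C$ settles cleanly. The conceptual crux is recognising that dropping loops replaces the rigid fibre $\N^\N$ by $\Inj(\N)$, which is exactly the monoid whose non-uniqueness of Polish semigroup topologies was exploited in Proposition~\ref{prop:non-unique-top}.
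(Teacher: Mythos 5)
Your proof is correct and takes essentially the same route as the paper: identify $\End(n\mathbb{K}_\omega)$ with the wreath product $\Inj(\N)\wr N^N$ and lift the infinitely many Polish semigroup topologies on $\Inj(\N)$ given by \cite[Theorem 5.22]{Elliott2019aa}, using the fact from the preliminaries that the wreath product of a Polish semigroup topology with the pointwise topology on $N^N$ is again a Polish semigroup topology. Your additional details --- the component-wise analysis justifying the isomorphism and the distinctness argument via the fibre copy $C$ --- are exactly the verifications the paper leaves implicit.
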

\begin{proof}
If $N$ is any set such that $|N| = n$, then 
$\End(n\mathbb{K}_{\omega})$ is isomorphic to the wreath product $\Inj(\N) \wr N ^ N$ where $\Inj(\N)$ is the monoid of injective functions from $\N$ to $\N$, and $N ^ N$ is the monoid of all functions from $N$ to itself.
Hence, by \cite[Theorem 5.22]{Elliott2019aa}, the infinitely many Polish semigroup topologies on $\Inj(\N)$ give rise to infinitely many distinct Polish semigroup topologies on 
$\Inj(\N) \wr N ^ N$. 
\end{proof}

\subsection{Proof of  Corollary~\ref{apex}}\label{subsection-apex}

In this section we prove Corollary~\ref{apex}.
\apex*
\begin{proof}
  It follows from Theorems~\ref{theorem-the-first-theorem} and~\ref{cor-zariski-endomorphisms} and Proposition~\ref{prop-luke-9} that the pointwise topology is contained in any Polish semigroup topology on the endomorphism structures of any of the structures mentioned in the statement. On the other hand, it follows from Theorem~\ref{lem-luke-0}\eqref{lem-luke-0-iii}, Proposition~\ref{prop-xxx}, and Corollary~\ref{cor-property-x} that pointwise topology is also the maximal topology in each of the cases.
\end{proof}

\begin{theorem}\label{thm-final}
  Let \(\mathbb{X}\) be one of the following structures:
  \begin{enumerate}[\rm (i)]
      \item the random graph;
      \item the random directed graph;
      \item the graph \(\omega \mathbb{K}_n\) for $n\in \N$;
      \item the graph \(n \mathbb{K}_\omega\) with loops for $n\in \N\cup \{\omega\}$ i.e., the random equivalence relations with \(n\) countably infinite equivalence classes;
  \end{enumerate}
  as well as (i)-(iii) with all the loops included.
  Then  \(\End(\mathbb{X})\) equipped with pointwise topology has automatic 
  continuity with respect to the class of second countable topological 
  semigroups.
\end{theorem}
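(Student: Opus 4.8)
The plan is to deduce the theorem from Theorem~\ref{lem-luke-0}\eqref{lem-luke-0-iv}, which transfers automatic continuity from a subsemigroup $A$ to $S$ whenever $S$ has property \textbf{X} with respect to $A$. Taking $S=\End(\mathbb{X})$ and $A=\Aut(\mathbb{X})$, the property \textbf{X} hypothesis is already on hand: for $\omega\mathbb{K}_n$ (with and without loops) it is Proposition~\ref{prop-xxx}, while for the random graph, the random directed graph, and the equivalence relations $n\mathbb{K}_\omega$ with loops (together with the remaining loop variants) it is Corollary~\ref{cor-property-x}. Hence the whole statement collapses to the single assertion that, for each listed $\mathbb{X}$, the group $\Aut(\mathbb{X})$ has automatic continuity with respect to the class of second countable topological semigroups.

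To organise this I would first note that adding a loop at every vertex does not change the automorphism group, so the loop and loop-free cases coincide, and then identify the groups explicitly. For the random graph and the random directed graph, $\Aut(\mathbb{X})$ is an oligomorphic permutation group with ample generics. For the remaining structures $\Aut(\mathbb{X})$ is an imprimitive wreath product of symmetric groups: one has $\Aut(\omega\mathbb{K}_n)\cong\Sym(n)\wr\Sym(\N)$ with $n$ finite, while $\Aut(n\mathbb{K}_\omega)\cong\Sym(\N)\wr\Sym(n)$ for finite $n$ and $\Aut(\omega\mathbb{K}_\omega)\cong\Sym(\N)\wr\Sym(\N)$.

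The core of the argument is then to establish semigroup automatic continuity for each of these groups. I would invoke that each has ample generics, hence is Steinhaus in the sense of Kechris--Rosendal, which yields automatic continuity for homomorphisms into second countable topological \emph{groups}. The step requiring genuine care is the upgrade to homomorphisms $\phi\colon\Aut(\mathbb{X})\to T$ into a second countable topological \emph{semigroup} $T$: one observes that $e:=\phi(1)$ is an idempotent and that $\phi(\Aut(\mathbb{X}))$ is a group with identity $e$, then runs the Steinhaus machinery with $e$ in place of the identity. Concretely, given open $U\ni e$ choose open $V\ni e$ with $V^k\subseteq U$ using continuity of the $k$-fold product at $(e,\dots,e)$; recover symmetry by setting $W=\{h:\phi(h)\in V,\ \phi(h^{-1})\in V\}$; extract countable syndeticity of $W$ from the separability of $T$ via joint continuity of multiplication; and finally propagate continuity at the identity to all of $\Aut(\mathbb{X})$ by left translation, using $\phi(g_0)e=\phi(g_0)$.

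The main obstacle I anticipate is precisely this passage from the group-theoretic automatic continuity to the semigroup version, since $T$ carries no inversion and its translations are not uniformly continuous, so the Steinhaus argument must be re-run by hand with $e=\phi(1)$ as identity and with symmetry and syndeticity supplied directly; I expect the countable-syndeticity estimate for $W$ to be the delicate point. A secondary issue, not to be overlooked, is confirming automatic continuity for the wreath-product groups $\Sym(n)\wr\Sym(\N)$, $\Sym(\N)\wr\Sym(n)$, and $\Sym(\N)\wr\Sym(\N)$, which I would handle either by verifying ample generics directly for the corresponding homogeneous structures or by a preservation result for automatic continuity under wreath products with $\Sym(\N)$, possibly reducing to the companion results on $\Sym(\N)$ and $\Inj(\N)$ already cited in the paper.
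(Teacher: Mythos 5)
Your skeleton---property \textbf{X} together with Theorem~\ref{lem-luke-0}(ii)---is exactly the paper's, and it goes through for the random graph, the random directed graph, $\omega\mathbb{K}_n$ with $n$ finite, all their loop variants, and $\omega\mathbb{K}_\omega$: those automorphism groups do have ample generics, and the upgrade from second countable group targets to second countable semigroup targets does not need to be re-run by hand, since it is precisely \cite[Proposition~4.1]{Elliott2019aa}, which the paper simply cites.

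The genuine gap is case (iv) for finite $n\geq 2$. You claim that $\Aut(n\mathbb{K}_\omega)\cong\Sym(\N)\wr\Sym(n)$ has ample generics; it does not. The kernel of the projection onto $\Sym(n)$ is an open normal subgroup of finite index, so every conjugacy class lies in the preimage of a conjugacy class of the nontrivial finite quotient; that preimage misses a nonempty open coset, hence no conjugacy class is dense, let alone comeagre. The paper states this explicitly and, for exactly this reason, abandons $\Aut(n\mathbb{K}_\omega)$ as the transfer subgroup in this case: instead it works inside $\End(n\mathbb{K}_\omega)\cong\N^\N\wr N^N$ (where $|N|=n$) with the diagonal subsemigroup $S=\set{(f,\ldots,f,\id)}{f\in\N^\N}$, observes that $S$ is topologically isomorphic to $\N^\N$ (via the uniqueness of the second countable $T_1$ semigroup topology on $\N^\N$), invokes automatic continuity of $\N^\N$ from \cite{Elliott2019aa}, and then verifies property \textbf{X} of the wreath product with respect to $S$ by an explicit construction (a partition of $\N$ into infinite blocks $N_i$ and bijections $b_i\colon\N\to N_i$). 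Your fallback---``a preservation result for automatic continuity under wreath products''---is the entire missing content and is neither proved nor cited; it can in fact be made to work (restrict to the open subgroup $\Sym(\N)^n$, obtain continuity there by writing a homomorphism as a pointwise product of $n$ continuous factor homomorphisms, then extend over the finitely many open cosets using left translation in the target semigroup), but as written your proposal does not cover $n\mathbb{K}_\omega$ with loops for finite $n\geq 2$.
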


\begin{proof}
A Polish group $G$ has \textit{ample generics} if $G$ has a comeagre orbit when acting by conjugation on $G ^ n$ for every $n\in \N$. 
It is well known, see~\cite[Remark 5.3.8]{macpherson2011aa}, that the automorphism groups of both the random graph and the random 
  directed graph have ample generics.
It can also be shown that the automorphism group of \(\omega \mathbb{K}_n\) has ample generics, by
showing that a certain class arising from the substructures of \(\omega
\mathbb{K}_n\), for $n\in \N$, has JEP and WAP, see \cite[Theorem~6.2]{Kechris2007aa} for more details.

It was shown in  \cite[Theorem~6.24]{Kechris2007aa} that any Polish group with ample generics has automatic continuity with respect to the class of second countable topological groups. It is straightforward to use \cite[Theorem~6.24]{Kechris2007aa} to show that any such group also
has automatic continuity with respect to the class of second countable topological semigroups; see \cite[Proposition~4.1]{Elliott2019aa}. Hence
 \(\Aut(\mathbb{X})\) 
 has automatic continuity with respect to the class of second countable topological semigroups when $\mathbb{X}$ is the random graph, random directed graph, or $\omega \mathbb{K}_n$ for any $n\in \N$.
 By Proposition~\ref{prop-xxx} and Corollary~\ref{cor-property-x}(i) and (ii), the endomorphism monoids of each of the structures in parts (i) to (iii) have property \textbf{X} with respect to their automorphism groups. Thus, by \thref{lem-luke-0}(ii), these endomorphism monoids have automatic continuity with respect to the class of second countable topological semigroups. This concludes the proof of the theorem for  cases (i), (ii), and (iii) (without loops).

 Clearly, 
 the automorphism groups of the random graph, the random directed graph, and $\omega \mathbb{K}_n$ (where $n\in \N \cup \{\omega\}$) with all the loops are equal to the automorphism groups of the respective structures without loops. Both Proposition~\ref{prop-xxx} and Corollary~\ref{cor-property-x} hold for these structures with loops, and hence the proof that these structures have automatic continuity with respect to the class of second countable topological semigroups is the same as that given above. 
  
  When proving (iv), we consider the cases when $n = \omega$ and $n\neq \omega$ separately.
  It can be shown that the automorphism group of $\omega \mathbb{K}_{\omega}$ has ample generics using a similar argument as mentioned above for $\omega \mathbb{K}_n$, $n\in \N$. That $\End(\omega \mathbb{K}_{\omega})$ has automatic continuity with respect to the class of second countable topological semigroups follows by the same argument as cases (i) to (iii), via Corollary~\ref{cor-property-x}(iv).
  
  On the other hand, if $n\neq \omega$, then the automorphism group of \(n \mathbb{K}_\omega\) contains an open finite index normal subgroup and hence does not have any comeagre conjugacy classes, let alone ample generics. To show that (iv) holds, it suffices to show that for every $n\in \N\cup \{\omega\}$ the endomorphism monoid of \(n \mathbb{K}_\omega\) with loops  has property $\mathbf{X}$ with respect to a subsemigroup which has automatic continuity.
  It is shown in \cite{Elliott2019aa} that the full transformation monoid $\N ^ {\N}$ has automatic continuity with respect to the class of second countable topological semigroups. 
  
   Recall that $\End(n \mathbb{K}_{\omega})$ is topologically isomorphic to the wreath product $\N ^ \N\wr N ^ N$ where $|N|=n$, as defined before Proposition \ref{prop-infinitely-many-wreath-product}.
  If $S = \set{(f, \ldots, f, \id)}{f\in \N ^ \N}$, then clearly $S$ is isomorphic to $\N ^ \N$.
   Since $S$ is a subspace of the second countable $T_1$ space $\End(n \mathbb{K}_{\omega})$, it follows that $S$ is a second countable $T_1$ topological semigroup itself. 
 It was shown in~\cite[Theorem 5.4(v)]{Elliott2019aa} that $\N ^ \N$, and hence $S$, has a unique second countable $T_1$ semigroup topology, the pointwise topology, and hence $S$ is topologically isomorphic to $\N ^ \N$. Hence $S$ has automatic continuity with respect to the class of second countable topological semigroups. 
 
 It remains to show that $\N ^ \N\wr N ^ N$ has property \textbf{X} with respect to $S$. 
 Let $N_0, N_1, \dots$ be a partition of $\N$ into infinite sets and for every $i\in \N$ let $b_i:\N \rightarrow N_i$ be a bijection.
 For an arbitrary $s = (s_1, \ldots, s_n, t) \in \N ^ \N\wr N ^ N$ we define $f_s = (b_1, \ldots, b_n ,t) \in  \N ^ \N\wr N ^ N$ and we define $b\in \N ^ \N$ to be any function extending 
 $b_i ^ {-1}s_i : N_i \to \N$ for all $i$ such that $1\leq i \leq n$. Note that $b$ exists because the sets $N_i$ are disjoint.
If   $t_s = (b, \ldots, b, \id) \in S$, then
\[
f_s t_s = (b_1, \ldots, b_n, t)(b, \ldots, b, \id) = (b_1b, \ldots, b_nb, t) = (s_1, \ldots, s_n, t) = s.
 \]
If $B$ is any neighbourhood of $t_s$, then there exists an open neighbourhood $U = [\sigma] = \set{f\in \N ^ \N}{\sigma \subseteq f}$ of $b$ in $\N ^ \N$ such that $t_s \in U\times \cdots \times U\times \{\id\} \subseteq B$. It follows that
\[
f_s((U\times \cdots \times U \times \{\id\})\cap S) 
= \set{(b_1, \ldots, b_n, t)(u, \ldots, u, \id)}{u\in U}
= \set{(b_1u, \ldots, b_nu, t)}{u\in U}.
\]
Clearly, 
\[\set{(b_1u, \ldots, b_nu, t)}{u\in U} \subseteq \set{(k_1, \ldots, k_n, t)}{\sigma\restriction_{N_i} \subseteq b_i^{-1}k_i}\]
 and conversely, if $(k_1, \ldots, k_n, t) \in \N ^ \N \wr n ^ n$  is such that $ \sigma\restriction_{N_i}\subseteq b_i ^{-1}k_i$, then since the partial functions $b_i^{-1}k_i$ have disjoint domains, 
 there exists $u\in U$ such that $b_iu = k_i$ for every $i$. Therefore 
\[\set{(b_1u, \ldots, b_nu, t)}{u\in U} = \set{(k_1, \ldots, k_n, t)}{\sigma\restriction_{N_i} \subseteq b_i^{-1}k_i}\]
and so $f_s((U\times \cdots \times U \times \{\id\})\cap S)$ is open, as required.
\end{proof}

The countably infinite homogeneous graphs were classified, up to isomorphism, in~\cite{Lachlan1980aa} as:

    \begin{enumerate}[\rm (i)]

      \item
        the random graph;

      \item
        the random $\mathbb{K}_n$-free graph, for every $n\geq 3$;

      \item
        the graph $n \mathbb{K}_m$ where \(n, m \leq \omega\) and at least one of \(n\) or \(m\) is equal to \(\omega\).
    \end{enumerate}
    and the duals of these graphs. At this point, we have almost achieved a classification of those homogeneous graphs whose endomorphism monoid has a unique Polish semigroup topology. More precisely, the monoids of endomorphisms of each of the following homogeneous graphs has a unique Polish semigroup topology by Corollary~\ref{apex}: the random graph;
        the graph $n \mathbb{K}_m$ where \(n, m \leq \omega\) and at least one of \(n\) or \(m\) is equal to \(\omega\).
On the other hand, the endomorphism monoids of the random  $\mathbb{K}_n$-free  graph, for every $n\geq 3$ have at least two distinct Polish semigroup topologies. This follows from Proposition~\ref{prop:non-unique-top}, since every endomorphism of the \(\mathbb{K}_n\)-free universal graph is an embedding. We show this is the case when \(n = 3\). Suppose that \(x, y\) are non-adjacent vertices in the \(\mathbb{K}_3\)-free graph which get mapped to an edge by some endomorphism \(f\). Then there is a vertex \(z\) which is adjacent to both \(x\) and \(y\), and so the triple \( (x)f\), \((y)f\), \((z)f\) forms a triangle, which is impossible. A similar argument shows that \(f\) has to be injective, and thus \(f\) is an embedding.
        
We do not know whether the endomorphism monoids of the duals of the $\mathbb{K}_n$-free universal graphs or the duals of the graphs $n \mathbb{K}_m$ where \(n, m \leq \omega\) and at least one of \(n\) or \(m\) is equal to \(\omega\) have a unique Polish semigroup topology or not. 

\begin{question}
  The endomorphism monoids of which homogeneous graphs have a unique Polish topology?
\end{question}

\ignore{
\begin{question}
  The endomorphism monoid of which  random $\mathbb{K}_n$-free graphs have property \textbf{X} with respect to their automorphism groups and/or a unique Polish topology?
\end{question}
}


\section{Another wee foray into the land of clones}\label{section-clones}

In this section we extend some of the results from earlier in the paper to clones. We
begin by giving the relevant definitions. 
If $C$ is any set, $g: C ^ m \to C$ for some $m\in \N\setminus \{0\}$ and $f_1,\ldots, f_m: C ^ n \to C$ for some $n\in \N\setminus\{0\}$, then we define $(f_1,\ldots, f_m) \circ_{m, n} g : C ^ n \to C$ by
\[
  (x_1,\ldots, x_n) \mapsto \left( (x_1,\ldots, x_n)f_1, \ldots, (x_1, \ldots, x_n)f_m \right)g.
\]
For $n\in \N\setminus\{0\}$ and  $i \in \{1, \ldots, n\}$, we denote by $\pi ^ n_i : C ^ n \to C$ the \textit{$i$-th projection of $C ^ n$}  defined by
\[
  (x_1, \ldots, x_n) \mapsto x_i.
\]

A \emph{(function) clone $\mathscr{C}$} with domain $C$ is a set of functions of finite
arity from $C$ to $C$ which is closed under composition  and also contains all
projections. 
 More precisely, the following hold:
 \begin{enumerate}[(i)]
   \item if $g \in \mathscr{C}$ is $m$-ary and $f_1,\ldots, f_m \in
           \mathscr{C}$ are $n$-ary, then
         $(f_1,\ldots, f_m) \circ_{m,n} g\in \mathscr{C}$;
 
   \item $\pi ^ n_i\in \mathscr{C}$ for every $n\in \N\setminus\{0\}$ and every $i \in \{1, \ldots, n\}$.
 \end{enumerate}
If $\mathscr{C}$ is a function clone  and $\T$ is a topology on $\mathscr C$, 
then we will say that
$\T$ is \textit{topological} for $\mathscr{C}$, or $\T$ is a \textit{clone topology} on $\mathscr{C}$, if the composition of functions $\circ_{m, n}$ are continuous for every $m, n\in\N\setminus\{0\}$.
The set of all finite arity functions from a set $X$ to itself is called the \emph{the full function clone on  $X$}; denoted by $\mathscr{O}_X$. We do not permit nullary functions in $\mathscr{O}_X$ following \cite{Bodirsky2017aa,kerkhoff2014short}.
We define the \emph{topology of pointwise convergence} on
$\mathscr{O}_X$ in a similar way to the full transformation
monoid $X ^ X$ where the subbasic open sets are of the form
\[
  U_{(a_1, \dots, a_n),b}=\{ f \in \mathscr{O}_X \colon (a_1, \ldots, a_n)f = b\}
\]
for every $a_1, \ldots, a_n, b \in X$. Similarly to the monoid case, if $X$ is countably infinite, this topology is Polish and topological for the full function clone $\mathscr{O}_X$.
A \textit{polymorphism} of a structure $\mathbb{X}$ is a homomorphism from a finite positive power of $\mathbb{X}$ into $\mathbb{X}$. The set of all polymorphisms of $\mathbb{X}$ forms a function clone on the set $X$ which is closed in $\mathscr{O}_X$. This clone is called the \textit{polymorphism clone of $\mathbb{X}$} and denoted by $\Poly(\mathbb{X})$.
A \textit{clone homomorphism} is a map between function clones which preserves arities, maps projections to corresponding projections, and preserves the composition maps \(\circ_{m, n}\) for every \(n\) and \(m\).
A topological clone $\mathscr{C}$ is said to have \emph{automatic continuity} with
respect to a class of topological clones, if every homomorphism from $\mathscr{C}$ to a
member of that class is continuous. 

We extend the results of the preceding sections to clones by associating a semigroup to each clone. 
If $\mathscr{C}$ is a function clone which is topological with respect to
the topology $\mathcal{T}$, then we define the associated semigroup $S_{\mathscr{C}}$ by:
the elements of $S_{\mathscr{C}}$ are the functions in
$\mathscr{C}$ with multiplication  given by
\[
  (x_1,\ldots, x_n) f * g = ((x_1, \ldots, x_n)f, \ldots,
  (x_1, \ldots, x_n)f)g
\]
for all $x_1, \ldots, x_n \in C$, whenever $f$ is an $n$-ary function in
$\mathscr{C}$. 
If both $f$ and $g$ are unary functions, then the operation $*$ is
the usual composition of functions. Since the composition of functions in $\mathscr{C}$ is continuous with respect to the topology  $\mathcal{T}$, it follows that $\mathcal{T}$ is a semigroup topology on  $S_{\mathscr{C}}$.
 We remark that  every clone homomorphism from one function clone to another function clone is also a homomorphism between the corresponding semigroups, but the converse does not hold; in other words, the algebraic structure of $S_{\mathscr{C}}$ is weaker than that of the clone $\mathscr C$. However, as it will turn out, this  weaker semigroup structure is sufficient to determine the
topology in the cases we consider here.

\begin{lem}[cf. Lemma~7.1 in \cite{Elliott2019aa}]\label{lem:lift-ac}
  Let \(\mathscr{C}\) be a topological clone, and suppose that \(S_\mathscr{C}\) has automatic continuity with respect to the topology on \(\mathscr{C}\) and the class of second countable topological semigroups. Then \(\mathscr{C}\) has automatic continuity with respect to the class of second countable topological clones.
\end{lem}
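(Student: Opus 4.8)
The plan is to mimic the strategy of \cite[Lemma 7.1]{Elliott2019aa}: transfer a given clone homomorphism to the associated semigroups, invoke the hypothesised automatic continuity there, and then observe that continuity is a purely topological statement which does not distinguish between a clone and its associated semigroup. Concretely, let \(\mathscr{D}\) be a second countable topological clone and let \(\Phi \colon \mathscr{C} \to \mathscr{D}\) be a clone homomorphism; the goal is to show that \(\Phi\) is continuous.

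First I would record that the semigroup multiplication \(*\) on \(S_{\mathscr{C}}\) is expressible purely in terms of the clone composition maps. Indeed, if \(f \in \mathscr{C}\) is \(n\)-ary and \(g \in \mathscr{C}\) is \(m\)-ary, then directly from the definition of \(*\) we have \(f * g = (f, \dots, f) \circ_{m, n} g\), where \(f\) is repeated \(m\) times. Since a clone homomorphism preserves arities and preserves each of the maps \(\circ_{m,n}\), it follows that \(\Phi(f * g) = (\Phi(f), \dots, \Phi(f)) \circ_{m,n} \Phi(g) = \Phi(f) * \Phi(g)\). Hence the underlying map of \(\Phi\) is a semigroup homomorphism \(\Phi \colon S_{\mathscr{C}} \to S_{\mathscr{D}}\).

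Next I would verify that \(S_{\mathscr{D}}\) is a second countable topological semigroup. Its underlying set and topology coincide with those of \(\mathscr{D}\), so it is second countable because \(\mathscr{D}\) is; and, exactly as in the remark preceding the lemma, the continuity of the composition maps on \(\mathscr{D}\) forces \(*\) to be continuous, so the topology of \(\mathscr{D}\) is a semigroup topology on \(S_{\mathscr{D}}\). By the hypothesis that \(S_{\mathscr{C}}\) has automatic continuity with respect to the class of second countable topological semigroups, the semigroup homomorphism \(\Phi \colon S_{\mathscr{C}} \to S_{\mathscr{D}}\) is therefore continuous.

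Finally I would conclude by noting that continuity of \(\Phi\) is a statement about the underlying topological spaces alone. Since the topology on \(S_{\mathscr{C}}\) is the topology \(\mathcal{T}\) on \(\mathscr{C}\) and the topology on \(S_{\mathscr{D}}\) is the topology on \(\mathscr{D}\), the continuity just established is precisely the continuity of \(\Phi\) as a map \(\mathscr{C} \to \mathscr{D}\). As \(\mathscr{D}\) and \(\Phi\) were arbitrary, \(\mathscr{C}\) has automatic continuity with respect to the class of second countable topological clones. I expect no serious obstacle here; the only point requiring genuine care is the first step, namely checking that \(*\) is clone-definable so that a clone homomorphism really does descend to a semigroup homomorphism. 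Everything else is bookkeeping reflecting the fact that passing to the associated semigroup alters only the algebra and not the topology.
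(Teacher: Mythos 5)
Your proof is correct and is precisely the intended argument: the paper states this lemma by citation (to Lemma~7.1 of \cite{Elliott2019aa}) and has already laid out your two key ingredients in the paragraph preceding the lemma, namely that a clone topology on $\mathscr{D}$ makes $*$ continuous on $S_{\mathscr{D}}$, and that every clone homomorphism is a semigroup homomorphism of the associated semigroups (via the identity $f * g = (f,\dots,f)\circ_{m,n} g$, which you rightly identify as the only step needing verification). Combining these with the hypothesis on $S_{\mathscr{C}}$ and the observation that the topologies on $\mathscr{C}$ and $S_{\mathscr{C}}$ (respectively $\mathscr{D}$ and $S_{\mathscr{D}}$) coincide gives exactly the paper's proof.
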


The first of the main results in this section is the following analogue of Theorem~\ref{theomodeltheorypropertyx}.

\begin{theorem}\label{theomodeltheorypropertyx-clone}
  Let \(\mathbb{X}\) be a relational structure which is homogeneous and
  homomorphism-homogeneous such that the age of \(\mathbb{X}\) is closed under finite non-empty direct products and has the strong
  amalgamation property with homomorphism gluing. Then \(S_{\Poly(\mathbb{X})}\)
  equipped with the pointwise topology has property \textbf{X} with respect to
  \(\Aut(\mathbb{X})\).
\end{theorem}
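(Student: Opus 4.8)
The plan is to reduce the statement to the machinery behind Theorem~\ref{prop-structures-property-x}, with the single modification that the left factor is now allowed to be an $n$-ary polymorphism rather than a unary endomorphism. The hypothesis that $\Age(\mathbb{X})$ is closed under finite non-empty direct products is exactly what makes this possible: it gives $\Age(\mathbb{X}^n)\subseteq\Age(\mathbb{X})$ for every $n\geq 1$, and hence, since $\mathbb{X}$ is homogeneous and therefore universal for its age, an embedding $e_n\colon\mathbb{X}^n\hookrightarrow\mathbb{X}$. An $n$-ary polymorphism $s$ is precisely a homomorphism $\mathbb{X}^n\to\mathbb{X}$, and for an $n$-ary $f$ and a unary $g$ the product in $S_{\Poly(\mathbb{X})}$ satisfies $(x)(f*g)=((x)f)g$, so $f*g$ is again $n$-ary and equals the composite of $f$ with the unary map $g$. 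Thus, fixing the arity $n$, I would work with a fixed $n$-ary embedding $F\colon\mathbb{X}^n\to\mathbb{X}$ as the left factor (note $f_s$ and $g_s$ are permitted to depend on $s$, here only through its arity), a fixed \textbf{rth} endomorphism $g$ as the right factor, and automorphisms of $\mathbb{X}$ as the middle factor.

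First I would construct a suitable $F$. Generalising Definitions~\ref{prop-structures-iii} and~\ref{prop-structures-ii} verbatim to an embedding $F\colon\mathbb{X}^n\to\mathbb{X}$ (replacing $\im(f)$ by $\im(F)$ throughout), I claim there is an $F$ that is \textbf{back} and \textbf{fo}. This is obtained by repeating the construction in the proof of Claim~\ref{claim-inside-proof} of Theorem~\ref{theomodeltheorypropertyx}, but starting the induction from $\mathbb{Y}_0:=\mathbb{X}^n$ instead of $\mathbb{X}$: since $\mathbb{X}^n$ is countable and embeds into $\mathbb{X}$, Lemma~\ref{lem-homo-glue} applies at every stage, and the resulting $\mathbb{Y}=\bigcup_n\mathbb{Y}_n$ is homogeneous with $\Age(\mathbb{Y})=\Age(\mathbb{X})$, hence isomorphic to $\mathbb{X}$; the inclusion $\mathbb{X}^n\cong\mathbb{Y}_0\hookrightarrow\mathbb{Y}\cong\mathbb{X}$ is the required $F$, and the arguments establishing \textbf{back} and \textbf{fo} carry over unchanged because they use only homomorphism gluing in $\Age(\mathbb{X})$. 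For $g$ I would take exactly the \textbf{rth} endomorphism produced in the proof of Claim~\ref{claim-g-exists}, which does not refer to arities.

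Next I would check that Lemma~\ref{lem-flex}, Lemma~\ref{lem-turbo-flex}, Lemma~\ref{lem-flux} and Theorem~\ref{prop-structures-property-x} go through with $F$ in place of $f$. Define $S_{F,g,h}$ to be the finite partial isomorphisms $p$ of $\mathbb{X}$ with $Fpg\subseteq h$ and $pg\cup F^{-1}h$ a partial homomorphism, where $h$ is an $n$-ary polymorphism and $F^{-1}h$ is the partial map on $\im(F)$ sending $(x)F$ to $(x)h$. The only place that needs $F$ to be an embedding rather than a mere homomorphism is the verification that $F^{-1}h$ is a homomorphism on $\im(F)$; this holds because $F$ reflects relations while $h$ preserves them. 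With this observation the three extension lemmas transcribe directly, so $S_{F,g,h}$ is a back and forth system of automorphisms of $\mathbb{X}$ for every $n$-ary $h$, and $\varnothing\in S_{F,g,h}$.

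Finally, given an arbitrary $n$-ary $s\in S_{\Poly(\mathbb{X})}$, Lemma~\ref{lem:baf-iso} applied to $\varnothing\in S_{F,g,s}$ yields $\alpha_s\in\Aut(\mathbb{X})$ with $s=F\alpha_s g=F*\alpha_s*g$, so I would set $f_s:=F$, $t_s:=\alpha_s$, and $g_s:=g$. For a neighbourhood $B$ of $\alpha_s$, determined by a finite $F_B\subseteq X$, I would put $H:=F_B\cup W_{F_B\setminus\im(F)}$ and show, exactly as in Theorem~\ref{prop-structures-property-x}, that every $n$-ary polymorphism $s'$ agreeing with $s$ on the finite set $(H)F^{-1}\subseteq X^n$ lies in $f_s(B\cap\Aut(\mathbb{X}))g_s$: the partial isomorphism $\alpha_s\restriction_H$ lies in $S_{F,g,s'}$ (agreement of $s'$ with $s$ on the full preimage $(H)F^{-1}$ makes $F(\alpha_s\restriction_H)g\subseteq s'$ immediate, and \textbf{fo}-ness of $F$ supplies the homomorphism condition) and extends to the required automorphism. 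I expect the main obstacle to be precisely this control of neighbourhoods. The naive alternative of factoring $s=e_n*h$ through the product embedding and quoting the unary property \textbf{X} fails, because forcing $s'$ to agree with $s$ on a finite subset of $X$ cannot control the infinitely many relations between $\im(e_n)$ and a single point lying outside it. Building one $n$-ary back and forth system around the embedding $F$ sidesteps this difficulty, since $F$ is injective and hence $(H)F^{-1}$ is a \emph{finite} subset of $X^n$ on which controlling $s'$ is both sufficient and achievable.
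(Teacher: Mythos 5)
Your proposal is correct and takes essentially the same route as the paper's own (sketched) proof: extend \textbf{back} and \textbf{fo} to $n$-ary polymorphisms, rerun the construction of Claim~\ref{claim-inside-proof} starting from $\mathbb{Y}_0 := \mathbb{X}^n$ (using closure of the age under finite products to keep $\Age(\mathbb{Y}) = \Age(\mathbb{X})$), reuse the \textbf{rth} endomorphism of Claim~\ref{claim-g-exists}, and transcribe Lemmas~\ref{lem-flex}--\ref{lem-flux} and Theorem~\ref{prop-structures-property-x} with the $n$-ary embedding in place of $f$. Your write-up is in fact more detailed than the paper's proof, which only records these modifications and omits the verifications.
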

\begin{proof}
The proof of this theorem is similar to the proof of Theorem~\ref{theomodeltheorypropertyx}, and for the sake of brevity is omitted. However, we will outline the key differences between the two proofs and what changes need to be made to the definitions.

\begin{itemize}
\item[$\triangleright$] Extend Definitions \ref{prop-structures-iii} and \ref{prop-structures-ii} of \textbf{fo} and \textbf{back} to elements $f\in \Poly(\mathbb{X})$.

\item[$\triangleright$] Change the statements of Theorem~\ref{prop-structures-property-x} and Claim~\ref{claim-inside-proof} as follows.  Instead of a single \textbf{back} and \textbf{fo} endomorphism $f$, demand a \textbf{back} and \textbf{fo} polymorphism $f_n$ with domain $\mathbb{X}^n$ for every $n$.

\item[$\triangleright$] In the proof of Claim~\ref{claim-inside-proof}, define $\mathbb{Y}_0$ to be $\mathbb{X}^n$ instead of $\mathbb{X}$. Note that, in the same proof, Age($\mathbb{Y})=\text{Age}(\mathbb{X})$ because Age($\mathbb{X}$) is closed under finite direct products and everything in the age of $\mathbb{X}$ is built in the construction of $\mathbb{Y}$ by extending the empty substructure of $\mathbb{X}^n$.
\qedhere
\end{itemize}
\end{proof}
\begin{lem}\label{min-end-to-clone}
Suppose that \(\mathbb{X}\) is a   structure, and for all \(x_1, x_2, \ldots, x_n\in X\) there exists \(a \in X\) and \(g_1, g_2, \ldots, g_n\in \End(\mathbb{X})\) with \((a)g_i= x_i\) for each \(i\in\{1,\ldots,n\}\). If the minimum Hausdorff semigroup topology on \(\End(\mathbb{X})\) is the pointwise topology, then the minimum Hausdorff clone topology on \(\Poly(\mathbb{X})\) is the pointwise topology.
\end{lem}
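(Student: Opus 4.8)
The plan is to prove that the pointwise topology, which is itself a Hausdorff clone topology on \(\Poly(\mathbb{X})\), is contained in every Hausdorff clone topology \(\mathcal{T}\) on \(\Poly(\mathbb{X})\); since it is Hausdorff and clone, this identifies it as the minimum. Write \(\mathscr{C} := \Poly(\mathbb{X})\) and let \(\mathscr{C}^{(n)}\) denote its \(n\)-ary part. Each \(\mathscr{C}^{(n)}\) is clopen in any clone topology, so that such a topology is the disjoint union of its restrictions to the arity parts. As the pointwise topology is generated by the subbasic sets \(U_{(a_1,\ldots,a_n),b}\), each of which is contained in a single \(\mathscr{C}^{(n)}\), it therefore suffices to prove that every such \(U_{(a_1,\ldots,a_n),b}\) is open in the subspace topology that \(\mathcal{T}\) induces on \(\mathscr{C}^{(n)}\).

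First I would treat the base case \(n = 1\), which is where the hypothesis on \(\End(\mathbb{X})\) enters. The unary part \(\mathscr{C}^{(1)}\) equals \(\End(\mathbb{X})\), and on it the semigroup operation \(*\) of \(S_{\mathscr{C}}\) restricts to the ordinary composition of endomorphisms. Hence \(\mathcal{T}\) restricted to \(\mathscr{C}^{(1)}\) is a Hausdorff semigroup topology on \(\End(\mathbb{X})\), so by hypothesis it contains the pointwise topology there. In particular each set \(\{h \in \End(\mathbb{X}) : (a)h = b\}\) is \(\mathcal{T}\)-open in \(\mathscr{C}^{(1)}\).

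The key step is to reduce the general \(n\)-ary case to this unary one, and this is exactly where the elements \(a\) and \(g_1,\ldots,g_n\) supplied by the hypothesis are used. Given \(a_1,\ldots,a_n \in X\), choose \(a \in X\) and \(g_1,\ldots,g_n \in \End(\mathbb{X})\) with \((a)g_i = a_i\) for each \(i\). Define \(\Phi \colon \mathscr{C}^{(n)} \to \mathscr{C}^{(1)}\) by \(\Phi(f) = (g_1,\ldots,g_n)\circ_{n,1} f\); this is continuous because it is the clone composition \(\circ_{n,1}\) with its first \(n\) arguments fixed at \(g_1,\ldots,g_n\). A direct computation gives \((a)\Phi(f) = ((a)g_1,\ldots,(a)g_n)f = (a_1,\ldots,a_n)f\) for every \(n\)-ary \(f\), so that
\[
U_{(a_1,\ldots,a_n),b} = \{ f \in \mathscr{C}^{(n)} : (a)\Phi(f) = b \} = \Phi^{-1}\big(\{h \in \End(\mathbb{X}) : (a)h = b\}\big).
\]
Since the set inside \(\Phi^{-1}\) is \(\mathcal{T}\)-open in \(\mathscr{C}^{(1)}\) by the base case and \(\Phi\) is continuous, \(U_{(a_1,\ldots,a_n),b}\) is \(\mathcal{T}\)-open in \(\mathscr{C}^{(n)}\), completing the reduction.

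The main obstacle is the design of the map \(\Phi\): the hypothesis on \(\End(\mathbb{X})\) is tailored precisely so that a single evaluation of the unary composite \(\Phi(f)\) at the point \(a\) recovers the value of the \(n\)-ary \(f\) at the prescribed tuple \((a_1,\ldots,a_n)\), which is what lets the minimality of the pointwise topology pass from the semigroup \(\End(\mathbb{X})\) to the whole clone. A secondary technical point is the use of the grading: because the arity parts are clopen in any clone topology, openness of \(U_{(a_1,\ldots,a_n),b}\) inside \(\mathscr{C}^{(n)}\) already yields openness inside \(\Poly(\mathbb{X})\).
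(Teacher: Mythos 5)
Your proof is correct and takes essentially the same approach as the paper's: the paper likewise deduces that the unary sets \(U_{a,y}\) are open because the clone topology restricted to \(\End(\mathbb{X})\) is a Hausdorff semigroup topology, and then pulls them back along the continuous map \(f \mapsto (g_1,\ldots,g_n)\circ_{n,1} f\) to obtain openness of \(U_{\mathbf{x},y}\) in arity \(n\). The only difference is that you make explicit the bookkeeping about arity parts being clopen (treating a clone topology as the topological sum of its arity parts), a point the paper's proof leaves implicit.
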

\begin{proof}
Suppose that \(\Poly(\mathbb{X})\) is equipped with some Hausdorff clone topology. By assumption, the set
\[U_{x, y}:= \set{f\in \End(\mathbb{X})}{(x)f= y}\]
is open for all \(x, y\in X\). Let \(x_1, x_2, \ldots, x_n, y\in X\) and write \(\mathbf{x}= (x_1, x_2, \ldots, x_n)\). It suffices to show that the set
\[U_{\mathbf{x}, y}:= \set{f\in \Pol(\mathbb{X})}{f \text{ has arity } n,\ (\mathbf{x})f= y}\]
is open. By assumption we can choose \(a\in X\) and \(g_1, g_2, \ldots, g_n\) such that \((a)g_i= x_i\) for each \(i\in\{1,\ldots,n\}\). That \(U_{\mathbf{x}, y}\) is open follows from 
\[f\in U_{\mathbf{x}, y}\iff (g_1, g_2, \ldots, g_n) \circ_{n, 1} f\in U_{a, y}.\qedhere\]
\end{proof}
\begin{theorem}\label{thm-unique-clones}
The pointwise topology is the unique Polish clone topology on the polymorphism clones of the following structures:
    \begin{enumerate}[\rm (i)]
      \item the random graph;
      \item the random directed graph;
      \item the random strict partial order;
      \item the random equivalence relation, i.e., the graph \(\omega \mathbb{K}_\omega\) with loops;
  \end{enumerate}
  as well as (i)-(iii) with all the loops included.
\end{theorem}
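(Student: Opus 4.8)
The plan is to prove the two halves of the statement---minimality and maximality---separately and then combine them, following the template of the proof of Corollary~\ref{apex} but working with the clone $\Poly(\mathbb{X})$ and its associated semigroup $S_{\Poly(\mathbb{X})}$ in place of $\End(\mathbb{X})$.

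For minimality I would apply Lemma~\ref{min-end-to-clone}, whose hypothesis on $\End(\mathbb{X})$ is easy to meet. Each listed structure is vertex-transitive (its automorphism group acts transitively on $X$), so given $x_1,\dots,x_n\in X$ one fixes any $a\in X$ and chooses $g_i\in\Aut(\mathbb{X})\subseteq\End(\mathbb{X})$ with $(a)g_i=x_i$. It then remains to know that the pointwise topology is the minimum Hausdorff semigroup topology on $\End(\mathbb{X})$. For the random graph, the random directed graph and the random strict partial order this is Theorem~\ref{cor-zariski-endomorphisms}: there the Zariski topology equals the pointwise topology, and since the Zariski topology is contained in every Hausdorff semigroup topology while the pointwise topology is itself one, the pointwise topology is the minimum such. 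For the loop versions, and for $\omega\mathbb{K}_\omega$ with loops, the same conclusion follows from Theorem~\ref{theorem-the-first-theorem}, whose parts (ii)--(v) show the pointwise topology lies in every $T_1$ semitopological, hence every Hausdorff semigroup, topology on the corresponding endomorphism monoid. Lemma~\ref{min-end-to-clone} then gives that the pointwise topology is the minimum Hausdorff clone topology on $\Poly(\mathbb{X})$, so in particular it is contained in every Polish clone topology $\T$.

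For maximality I would verify the hypotheses of Theorem~\ref{theomodeltheorypropertyx-clone}. Homogeneity, homomorphism-homogeneity and the strong amalgamation property with homomorphism gluing were already checked for all of these structures in the proof of Corollary~\ref{cor-property-x} (free amalgamation for the random graph and digraph, Example~\ref{example-P-has-saphg} for the partial orders, and a direct check for $\omega\mathbb{K}_\omega$ with loops). The one genuinely new requirement is that the age be closed under finite non-empty direct products, which I would check coordinatewise using the categorical relation carried by the power $\mathbb{X}^n$: a product of finite loop-free (resp.\ all-loop) graphs or digraphs is again finite and loop-free (resp.\ all-loop); irreflexivity or reflexivity together with transitivity and antisymmetry pass to products of (strict or reflexive) partial orders; and reflexivity, symmetry and transitivity pass to products of equivalence relations. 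Thus Theorem~\ref{theomodeltheorypropertyx-clone} applies, and $S_{\Poly(\mathbb{X})}$ with the pointwise topology has property \textbf{X} with respect to $\Aut(\mathbb{X})$, which sits inside $S_{\Poly(\mathbb{X})}$ as a Polish subgroup of unary elements.

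Finally I would combine the two halves. By Theorem~\ref{lem-luke-0}\eqref{lem-luke-0-iii} the pointwise topology is maximal among the Polish semigroup topologies on $S_{\Poly(\mathbb{X})}$. Now if $\T$ is any Polish clone topology on $\Poly(\mathbb{X})$, then, since continuity of the compositions $\circ_{n,1}$ makes the semigroup product $*$ continuous, $\T$ is a Polish semigroup topology on $S_{\Poly(\mathbb{X})}$; minimality gives that it contains the pointwise topology, and maximality forbids it from strictly containing it, so $\T$ equals the pointwise topology. As the pointwise topology is itself a Polish clone topology (being the subspace topology inherited from the full function clone on $X$), uniqueness follows. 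The main point to be careful about is precisely this last interface: property \textbf{X} only yields maximality among \emph{semigroup} topologies on the associated semigroup, so the argument genuinely relies on every clone topology restricting to such a semigroup topology, with the minimality step supplying the matching lower bound. The only substantial new verification relative to the endomorphism-monoid setting of Corollary~\ref{apex} is the closure of each age under finite direct products.
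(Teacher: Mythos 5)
Your proposal is correct and follows essentially the same route as the paper's proof: minimality via Theorems~\ref{theorem-the-first-theorem} and~\ref{cor-zariski-endomorphisms} combined with Lemma~\ref{min-end-to-clone}, and maximality via property \textbf{X} for $S_{\Poly(\mathbb{X})}$ from Theorem~\ref{theomodeltheorypropertyx-clone} together with Theorem~\ref{lem-luke-0}(i), with the two bounds then forcing uniqueness. Your only slip is cosmetic: the semigroup product $f*g$ (with $f$ of arity $n$ and $g$ of arity $m$) is $(f,\ldots,f)\circ_{m,n}g$, so continuity of $*$ uses all the maps $\circ_{m,n}$ rather than just $\circ_{n,1}$, but this does not affect the argument since a clone topology makes every $\circ_{m,n}$ continuous.
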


\begin{proof}
  Let $\mathbb{X}$ be any of the structures in the statement of the theorem. As discussed in the proof of  Corollary~\ref{cor-property-x},  $\mathbb{X}$ is homogeneous, homomorphism-homogeneous, and has the strong amalgamation property with homomorphism gluing. Moreover, in each case, it is easy to show that Age($\mathbb{X}$) is closed under finite direct products. Hence \(S_{\Pol(\mathbb{X})}\) has property \textbf{X} with respect to $\Aut(\mathbb{X})$ by Theorem~\ref{theomodeltheorypropertyx-clone}.
  It follows from Theorem~\ref{lem-luke-0}(i) that the pointwise topology is maximal among Polish clone topologies on \(S_{\Pol(\mathbb{X})}\), and thus the Polish clone topologies on \(\Pol(\mathbb{X})\).
  
  On the other hand, it follows from Theorems~\ref{theorem-the-first-theorem} and~\ref{cor-zariski-endomorphisms} and Lemma~\ref{min-end-to-clone} that the pointwise topology is the minimal Polish clone topology on \(\Pol(\mathbb{X})\). Hence the pointwise topology is the unique Polish clone topology on \(\Pol(\mathbb{X})\).
\end{proof}

\begin{theorem}\label{thm-ac-clones}
  Let \(\mathbb{X}\) be one of the following structures:
  \begin{enumerate}[\rm (i)]
      \item the random graph;
      \item the random directed graph;
 \item the random equivalence relation, i.e., the graph \(\omega \mathbb{K}_\omega\) with loops;
  \end{enumerate}
  as well as (i)-(ii) with all the loops included.
  Then  \(\Pol(\mathbb{X})\) equipped with pointwise clone topology has automatic 
  continuity with respect to the class of second countable topological 
  clones.
\end{theorem}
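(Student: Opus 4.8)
The plan is to assemble the machinery of the preceding sections into a short chain of implications, proceeding exactly as in the proof of Theorem~\ref{thm-final} but one level higher: first establishing automatic continuity for the associated semigroup $S_{\Pol(\mathbb{X})}$, and then transferring it to the clone $\Pol(\mathbb{X})$ via Lemma~\ref{lem:lift-ac}.

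First I would check that each structure $\mathbb{X}$ in the list satisfies the hypotheses of Theorem~\ref{theomodeltheorypropertyx-clone}. As recorded in the proofs of Corollary~\ref{cor-property-x} and Theorem~\ref{thm-unique-clones}, the random graph, the random directed graph, and the random equivalence relation $\omega\mathbb{K}_\omega$ with loops (and the first two with the loops included) are all homogeneous and homomorphism-homogeneous, their ages have the strong amalgamation property with homomorphism gluing, and their ages are closed under finite non-empty direct products. Consequently, Theorem~\ref{theomodeltheorypropertyx-clone} applies, and $S_{\Pol(\mathbb{X})}$ equipped with the pointwise topology has property \textbf{X} with respect to $\Aut(\mathbb{X})$.

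Next I would recall that $\Aut(\mathbb{X})$ has automatic continuity with respect to the class of second countable topological semigroups, which was already established in the course of proving Theorem~\ref{thm-final}. Indeed, the automorphism groups of the random graph and the random directed graph have ample generics by \cite[Remark 5.3.8]{macpherson2011aa}, and the automorphism group of $\omega\mathbb{K}_\omega$ has ample generics by the JEP/WAP argument indicated there (cf.\ \cite[Theorem~6.2]{Kechris2007aa}); moreover, adding loops does not change the automorphism group. By \cite[Theorem~6.24]{Kechris2007aa} together with \cite[Proposition~4.1]{Elliott2019aa}, a Polish group with ample generics has automatic continuity with respect to the class of second countable topological semigroups, so each $\Aut(\mathbb{X})$ does.

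Finally I would chain these facts together. Since $S_{\Pol(\mathbb{X})}$ has property \textbf{X} with respect to $\Aut(\mathbb{X})$ and the latter has automatic continuity with respect to second countable topological semigroups, Theorem~\ref{lem-luke-0}(ii) yields that $S_{\Pol(\mathbb{X})}$ has automatic continuity with respect to that class as well; Lemma~\ref{lem:lift-ac} then lifts this to the clone, giving automatic continuity of $\Pol(\mathbb{X})$ with respect to the class of second countable topological clones. The argument is largely bookkeeping once the earlier results are in hand; the one point requiring genuine care --- and the main obstacle --- is verifying that the hypotheses of Theorem~\ref{theomodeltheorypropertyx-clone} hold simultaneously, in particular the closure of $\Age(\mathbb{X})$ under finite direct products and the restriction in case (iii) to $n=\omega$, since for $n\neq\omega$ the group $\Aut(n\mathbb{K}_\omega)$ lacks ample generics and the route above would not be available.
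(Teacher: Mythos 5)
Your proposal is correct and follows essentially the same route as the paper: property \textbf{X} for $S_{\Pol(\mathbb{X})}$ via Theorem~\ref{theomodeltheorypropertyx-clone} (as verified in Theorem~\ref{thm-unique-clones}), automatic continuity of $\Aut(\mathbb{X})$ via ample generics as in Theorem~\ref{thm-final}, then Theorem~\ref{lem-luke-0}(ii) to transfer to the semigroup and a lift to the clone. If anything, your write-up is slightly more careful than the paper's, since you explicitly invoke Lemma~\ref{lem:lift-ac} for the final passage from semigroup automatic continuity to clone automatic continuity, a step the paper leaves implicit, and you correctly flag why case (iii) must be restricted to $n=\omega$.
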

\begin{proof}
  The proof of this theorem is analogous to that of Theorem~\ref{thm-final}.
  As discussed in the proof of Theorem~\ref{thm-unique-clones}, if $\mathbb{X}$ is any of the structures in the statement, then \(S_{\Pol(\mathbb{X})}\) has property \textbf{X} with respect to $\Aut(\mathbb{X})$.
  As mentioned in the proof of Theorem~\ref{thm-final}, $\Aut(\mathbb{X})$ has automatic continuity with respect to the class of second countable topological semigroups. Theorem~\ref{lem-luke-0}(ii) implies that $\Pol(\mathbb{X})$ has automatic continuity for the class of second countable topological clones, as required.
\end{proof}
\bibliography{sim}{}

\begin{thebibliography}{10}

\bibitem{Barbina2007}
Silvia Barbina and Dugald Macpherson.
\newblock Reconstruction of homogeneous relational structures.
\newblock {\em Journal of Symbolic Logic}, 72(3):792--802, 2007.

\bibitem{Behrisch2017aa}
Mike Behrisch, John~K. Truss, and Edith Vargas-Garc\'{\i}a.
\newblock Reconstructing the topology on monoids and polymorphism clones of the
  rationals.
\newblock {\em Studia Logica}, 105(1):65--91, 2017.

\bibitem{Cores-journal}
Manuel Bodirsky.
\newblock Cores of countably categorical structures.
\newblock {\em Logical Methods in Computer Science ({LMCS})}, 3(1):1--16, 2007.

\bibitem{Bodirsky2018aa}
Manuel Bodirsky, David Evans, Michael Kompatscher, and Michael Pinsker.
\newblock A counterexample to the reconstruction of {$\omega$}-categorical
  structures from their endomorphism monoid.
\newblock {\em Israel Journal of Mathematics}, 224(1):57--82, 2018.

\bibitem{Bodirsky2014ab}
Manuel Bodirsky and Michael Pinsker.
\newblock Topological {B}irkhoff.
\newblock {\em Transactions of the American Mathematical Society},
  367(4):2527--2549, 2014.

\bibitem{Bodirsky2017aa}
Manuel Bodirsky, Michael Pinsker, and Andr\'{a}s Pongr\'{a}cz.
\newblock Reconstructing the topology of clones.
\newblock {\em Transactions of the American Mathematical Society},
  369(5):3707--3740, 2017.

\bibitem{Bodirsky2014}
Manuel Bodirsky, Michael Pinsker, and András Pongrácz.
\newblock Projective clone homomorphisms.
\newblock {\em Journal of Symbolic Logic}, 86(1):148--161, 2021.

\bibitem{Cameron1990}
Peter~J. Cameron.
\newblock {\em Oligomorphic Permutation Groups}.
\newblock Cambridge University Press, June 1990.

\bibitem{Cameron1999aa}
Peter~J. Cameron.
\newblock {\em Permutation groups}, volume~45 of {\em London Mathematical
  Society Student Texts}.
\newblock Cambridge University Press, Cambridge, 1999.

\bibitem{Cameron2006aa}
Peter~J. Cameron and Jaroslav Ne{\v{s}}et{\v{r}}il.
\newblock Homomorphism-homogeneous relational structures.
\newblock {\em Combin. Probab. Comput.}, 15(1-2):91--103, 2006.

\bibitem{Chang2017aa}
Xiao Chang and Paul Gartside.
\newblock Minimum topological group topologies.
\newblock {\em J. Pure Appl. Algebra}, 221(8):2010--2024, 2017.

\bibitem{Cohen2016aa}
Michael~P. Cohen and Robert~R. Kallman.
\newblock {${\rm PL}_+(I)$} is not a {P}olish group.
\newblock {\em Ergodic Theory Dynam. Systems}, 36(7):2121--2137, 2016.

\bibitem{Coleman:2017aa}
Thomas Coleman.
\newblock {\em Automorphisms and endomorphisms of first-order structures}.
\newblock PhD thesis, University of East Anglia, 2017.

\bibitem{Elliott2019aa}
Luke Elliott, Julius Jonušas, Zak Mesyan, James~D. Mitchell, Micha\l{}
  Morayne, and Yann~H. P\'eresse.
\newblock Automatic continuity, unique {P}olish topologies, and {Z}ariski
  topologies for monoids and clones, 2019, arXiv:1912.07029.

\bibitem{Evans1990}
David~M. Evans and Paul~R. Hewitt.
\newblock Counterexamples to a conjecture on relative categoricity.
\newblock {\em Annals of Pure and Applied Logic}, 46(2):201--209, February
  1990.

\bibitem{Fraisse2000aa}
Rolland {Fra\"{\i}ss\'e}.
\newblock {\em {Theory of relations. Transl. from the French by P. Clote. With
  an appendix by Norbert Sauer. Revised ed.}}, volume 145.
\newblock Amsterdam: North-Holland, revised ed. edition, 2000.

\bibitem{Gartside2008ab}
Paul Gartside and Bojana Peji{\'c}.
\newblock Uniqueness of {P}olish group topology.
\newblock {\em Topology Appl.}, 155(9):992--999, 2008.

\bibitem{Gaughan1967aa}
Edward~D. Gaughan.
\newblock Topological group structures of infinite symmetric groups.
\newblock {\em Proc. Nat. Acad. Sci. U.S.A.}, 58:907--910, 1967.

\bibitem{givant2008introduction}
Steven Givant and Paul Halmos.
\newblock {\em Introduction to Boolean algebras}.
\newblock Springer Science \& Business Media, 2008.

\bibitem{Herwig1998}
Bernhard Herwig.
\newblock Extending partial isomorphisms for the small index property of many
  $\omega$-categorical structures.
\newblock {\em Israel Journal of Mathematics}, 107(1):93--123, December 1998.

\bibitem{Hodges1997aa}
Wilfrid Hodges.
\newblock {\em A shorter model theory}.
\newblock Cambridge University Press, Cambridge, 1997.

\bibitem{Hodges1993ab}
Wilfrid Hodges, Ian Hodkinson, Daniel Lascar, and Saharon Shelah.
\newblock The small index property for $\omega$-stable ($\omega$-categorical
  structures and for the random graph.
\newblock {\em Journal of the London Mathematical Society}, s2-48(2):204--218,
  October 1993.

\bibitem{Hrushovski1992aa}
Ehud Hrushovski.
\newblock Extending partial isomorphisms of graphs.
\newblock {\em Combinatorica}, 12(4):411--416, 1992.

\bibitem{Kallman1976aa}
Robert~R. Kallman.
\newblock A uniqueness result for topological groups.
\newblock {\em Proc. Amer. Math. Soc.}, 54:439--440, 1976.

\bibitem{Kallman1979aa}
Robert~R. Kallman.
\newblock A uniqueness result for the infinite symmetric group.
\newblock In {\em Studies in analysis}, volume~4 of {\em Adv. in Math. Suppl.
  Stud.}, pages 321--322. Academic Press, New York-London, 1979.

\bibitem{Kallman1984aa}
Robert~R. Kallman.
\newblock A uniqueness result for a class of compact connected groups.
\newblock In {\em Conference in modern analysis and probability ({N}ew {H}aven,
  {C}onn., 1982)}, volume~26 of {\em Contemp. Math.}, pages 207--212. Amer.
  Math. Soc., Providence, RI, 1984.

\bibitem{Kallman1984ab}
Robert~R. Kallman.
\newblock Uniqueness results for the {$ax+b$} group and related algebraic
  objects.
\newblock {\em Fund. Math.}, 124(3):255--262, 1984.

\bibitem{Kallman1986aa}
Robert~R. Kallman.
\newblock Uniqueness results for homeomorphism groups.
\newblock {\em Trans. Amer. Math. Soc.}, 295(1):389--396, 1986.

\bibitem{Kallman2010aa}
Robert~R. Kallman and Alexander~P. McLinden.
\newblock The {P}oincar\'e and related groups are algebraically determined
  {P}olish groups.
\newblock {\em Collect. Math.}, 61(3):337--352, 2010.

\bibitem{Kechris1995aa}
Alexander~S. Kechris.
\newblock {\em Classical descriptive set theory}, volume 156 of {\em Graduate
  Texts in Mathematics}.
\newblock Springer-Verlag, New York, 1995.

\bibitem{Kechris2007aa}
Alexander~S. Kechris and Christian Rosendal.
\newblock Turbulence, amalgamation, and generic automorphisms of homogeneous
  structures.
\newblock {\em Proc. Lond. Math. Soc. (3)}, 94(2):302--350, 2007.

\bibitem{kerkhoff2014short}
Sebastian Kerkhoff, Reinhard P{\"o}schel, and Friedrich~Martin Schneider.
\newblock A short introduction to clones.
\newblock {\em Electr. Notes Theor. Comput. Sci.}, 303:107--120, 2014.

\bibitem{Lachlan1980aa}
Alistair~H. Lachlan and Robert~E. Woodrow.
\newblock Countable ultrahomogeneous undirected graphs.
\newblock {\em Trans. Amer. Math. Soc.}, 262(1):51--94, 1980.

\bibitem{Lascar1991}
Daniel Lascar.
\newblock Autour de la propri{\'{e}}t{\'{e}} du petit indice.
\newblock {\em Proceedings of the London Mathematical Society},
  s3-62(1):25--53, January 1991.

\bibitem{macpherson2011aa}
Dugald Macpherson.
\newblock A survey of homogeneous structures.
\newblock {\em Discrete Mathematics}, 311(15):1599 -- 1634, 2011.
\newblock Infinite Graphs: Introductions, Connections, Surveys.

\bibitem{melleray2008some}
Julien Melleray.
\newblock Some geometric and dynamical properties of the {U}rysohn space.
\newblock {\em Topology and its Applications}, 155(14):1531--1560, 2008.

\bibitem{paolini2018strong}
Gianluca Paolini and Saharon Shelah.
\newblock The strong small index property for free homogeneous structures,
  2018, 1703.10517.

\bibitem{Paolini2019}
Gianluca Paolini and Saharon Shelah.
\newblock Reconstructing structures with the strong small index property up to
  bi-definability.
\newblock {\em Fundamenta Mathematicae}, 247(1):25--35, 2019.

\bibitem{Paolini2020}
Gianluca Paolini and Saharon Shelah.
\newblock Automorphism groups of countable stable structures.
\newblock {\em Fundamenta Mathematicae}, 248(3):301--307, 2020.

\bibitem{Pech2016aa}
Christian Pech and Maja Pech.
\newblock On automatic homeomorphicity for transformation monoids.
\newblock {\em Monatsh. Math.}, 179(1):129--148, 2016.

\bibitem{Pech2017aa}
Christian Pech and Maja Pech.
\newblock Reconstructing the topology of the elementary self-embedding monoids
  of countable saturated structures.
\newblock {\em Studia Logica}, 106(3):595--613, September 2017.

\bibitem{Pech:2018aa}
Christian Pech and Maja Pech.
\newblock Polymorphism clones of homogeneous structures: gate coverings and
  automatic homeomorphicity.
\newblock {\em Algebra universalis}, 79(2):35, 2018.

\bibitem{Perez2020aa}
Jos\'e Perez and Carlos Uzcategui.
\newblock Topologies on the symmetric inverse semigroup, 2020,
  arXiv:2012.03041.

\bibitem{Rado1964aa}
Richard Rado.
\newblock Universal graphs and universal functions.
\newblock {\em Acta Arith.}, 9:331--340, 1964.

\bibitem{Rosendal2007ac}
Christian Rosendal and S\l{}awomir Solecki.
\newblock Automatic continuity of homomorphisms and fixed points on metric
  compacta.
\newblock {\em Israel J. Math.}, 162:349--371, 2007.

\bibitem{Rubin1994}
Matatyahu Rubin.
\newblock On the reconstruction of $\aleph_0$-categorical structures from their
  automorphism groups.
\newblock {\em Proceedings of the London Mathematical Society},
  s3-69(2):225--249, September 1994.

\bibitem{Sabok2019aa}
Marcin Sabok.
\newblock {A}utomatic continuity for isometry groups.
\newblock {\em J. Inst. Math. Jussieu}, 18(3):561--590, 2019.

\bibitem{Shelah1984aa}
Saharon Shelah.
\newblock Can you take {S}olovay's inaccessible away?
\newblock {\em Israel Journal of Mathematics}, 48(1):1--47, 1984.

\bibitem{Solovay1970aa}
Robert~M. Solovay.
\newblock A model of set-theory in which every set of reals is lebesgue
  measurable.
\newblock {\em Annals of Mathematics}, 92(1):1--56, 1970.

\bibitem{Truss1989}
John~K. Truss.
\newblock Infinite permutation groups {II}. subgroups of small index.
\newblock {\em Journal of Algebra}, 120(2):494--515, February 1989.

\end{thebibliography}
\bibliographystyle{hplain}

\end{document}